\newtheorem{theorem}{Theorem}[section]
\newtheorem{lemma}[theorem]{Lemma}
\newtheorem{proposition}[theorem]{Proposition}
\theoremstyle{definition}
\newtheorem*{definition}{Definition}
\theoremstyle{remark}
\title{Distance domatic numbers for grid graphs}
\author{Alex Cameron and Jiasheng Yan}
\address[Alex Cameron]{Vanderbilt University}
\email{alexander.cameron@vanderbilt.edu}
\address[Jiasheng Yan]{Vanderbilt University}
\email{jiasheng.yan@vanderbilt.edu}
\date{}
\begin{document}
\maketitle

\begin{abstract}
We say that a vertex-coloring of a graph is a proper $k$-distance domatic coloring if for each color, every vertex is within distance $k$ from a vertex receiving that color. The maximum number of colors for which such a coloring exists is called the $k$-distance domatic number of the graph. The problem of determining the $k$-distance domatic number is motivated by questions about multi-agent networks including arrangements of sensors and robotics. Here, we find the exact $k$-distance domatic numbers for all grid graphs formed from the Cartesian product of two sufficiently long paths.
\end{abstract}

Given a graph $G$ and a subset of its vertices $S \subseteq V(G)$, we say that $S$ \textit{dominates} $G$ if for each vertex $x \in V(G)$, either $x \in S$ or there exists some vertex $y \in S$ for which $xy \in E(G)$. We call a partition of the vertices, \[V(G) = S_1 \cup S_2 \cup \cdots \cup S_t,\] into $t$ disjoint parts a \textit{domatic partition} if each $S_i$ is a dominating set of $G$. The maximum such $t$ is known as the \textit{domatic number} of a graph, and was introduced by Cockayne and Hedetniemi in 1977 \cite{1977} and has been studied extensively since then. Of particular relevance here, is paper by Chang \cite{chang94} in which he, among other things, calculated the domatic numbers for all two-dimensional grid graphs, the Cartesian product of two paths.

In 1983, Zelinka \cite{zelinka83} extended the definition to a \textit{$k$-domatic number}, the maximum number of parts in a partition of the vertices such that each part is ``$k$-dominating" in the sense that every vertex of the graph is within distance $k$ from some vertex of the set. The term ``$k$-domatic" has come to refer to more than one thing in the literature so on a formal level we would use \textit{$k$-distance domatic number} of a graph as we do in the definition section. However, in this paper, we only ever mean this definition so we often use the shorter ``$k$-domatic" in the proofs. Recently, in 2015, Kiser \cite{masters} determined the $2$-distance domatic numbers for all two-dimensional grid graphs including the infinite grid, the Cartesian product of two paths that are infinite in either direction. In this paper, we determine the $k$-distance domatic numbers for ``almost" all two-dimensional grid graphs as well as for the infinite case.

This problem is motivated more recently by the study of information sharing in multi-agent networks. In fact, we originally came up with the definition of a distance domatic number while talking about spy networks. Imagine that the vertices of a graph represent the agents of a covert network of spies, and let an edge between two vertices denote that two agents are in contact. Each agent holds one piece of information, and has access to the other information in the network through their contacts. However, we assume that information transmitted through the network becomes less reliable the more agents it needed to travel through so for practical purposes we assume that any particular agent can only access information held by an agent within a certain limited distance on the graph. The distance domatic number is the maximum number of pieces of information that can be stored in this spy network if we wish all agents to have reliable access to all of the information, a kind of measurement of the carrying capacity of information for a given network.

Similarly, Abbas, Egerstedt, Liu, Thomas, and Whalen \cite{robots} explained the same concept but with the motivation of studying networks of robots which each make decisions based on information communicated by other nearby robots. This work has spun off into a line of papers about \textit{coupon numbers} of graphs \cite{coupon, shi17}, which are the same as domatic numbers with the added twist that a vertex can never dominate itself.

\section{Definitions and main result}

\begin{definition}
Given a graph $G$ and two vertices $x,y \in V(G)$, let $d(x,y)$ denote the \textit{distance} between $x$ and $y$ - the length of the shortest path with $x$ and $y$ as endpoints. If $x=y$, then $d(x,x)=0$.
\end{definition}

\begin{definition}
For a fixed positive integer $k$, a graph $G$, and a vertex $x \in V(G)$, let \[f_k(x) = |\{y \in V(G) : d(x,y) \leq k\}|.\] Let \[\delta_k(G) = \min_{x \in V(G)}{f_k(x)},\] the \textit{minimum $k$-distance degree} of $G$.
\end{definition}

\begin{definition}
Given a graph $G$ and a subset of its vertices $S \subseteq V(G)$, we say that $S$ \textit{$k$-distance dominates} $G$ if for each vertex $x \in V(G)$, there exists some vertex $y \in S$ such that $d(x,y) \leq k$.
\end{definition}

\begin{definition}
Given a graph $G$, we call a partition of the vertices, \[V(G) = S_1 \cup S_2 \cup \cdots \cup S_t,\] into $t$ disjoint parts a \textit{$k$-distance domatic partition} if each $S_i$ is a $k$-distance dominating set of $G$. Such a partition is equivalent to an assignment of $t$ colors to the vertices, \[c:V(G) \rightarrow \{1,2,\ldots,t\},\] such that for each vertex $x \in V(G)$ and for each color $i \in \{1,\ldots,t\}$, there exists a vertex $y \in V(G)$ such that $d(x,y) \leq k$ and $c(y) = i$. We will call such a coloring a \textit{proper $k$-distance domatic coloring}. For a given positive integer $k$, let $d_k(G)$ denote the maximum $t$ for which such an assignment exists, the \textit{$k$-distance domatic number} of $G$.
\end{definition}

\begin{definition}
Note that for any graph $G$, \[d_k(G) \leq \delta_k(G).\] Graphs that achieve this upper bound, that is any graph $G$ for which \[d_k(G) = \delta_k(G),\] are called \textit{$k$-distance domatically-full}.
\end{definition}

\begin{definition}
Given positive integers $s$ and $t$, let the $s \times t$ \textit{grid graph} $G_{s,t}$ be the graph with vertex set \[V(G_{s,t}) = \{(i,j) \in \mathbb{Z}^2: 1 \leq i \leq s, 1 \leq j \leq t\}\] and edge set \[E(G_{s,t}) =\{(a,b)(c,d) : |a-c|+|b-d| =1\}.\]
\end{definition}

In all that follows, we will generally assume that $r \leq l$ when talking about the grid graph $G_{r,l}$ since $G_{r,l}$ is isomorphic to $G_{l,r}$.

\begin{definition}
Let $G_{\infty,\infty}$ denote the infinite graph with vertex set $V=\mathbb{Z}^2$ and edge set \[E=\{(a,b)(c,d) : |a-c|+|b-d| = 1\}.\]
\end{definition}

\begin{theorem}
\label{mainthm2}
Let $k,r,l$ be some positive integers such that $r \leq l$ and either $1 \leq r \leq k+2$ or $k+1+\left\lceil\frac{k+1}{2}\right\rceil \leq r$, then all but finitely many of the grid graphs $G_{r,l}$ are $k$-distance domatically-full.
\end{theorem}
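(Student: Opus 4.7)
The plan is to show $d_k(G_{r,l}) \geq \delta_k(G_{r,l})$ by exhibiting an explicit $k$-distance domatic partition of $G_{r,l}$ into exactly $\delta_k(G_{r,l})$ parts, since the reverse inequality is automatic. Both regimes of the hypothesis begin with the same preliminary step: compute $\delta_k(G_{r,l})$ in closed form. A short exchange argument shows that $f_k$ is minimized at a corner. Then $f_k(1,1) = \binom{k+2}{2}$ when $r \geq k+1$, and $f_k(1,1) = r(2k+3-r)/2$ when $r \leq k+1$; the two formulas agree at $r = k+1$, so $\delta_k$ depends on $r$ only through $\min(r,k+1)$ and, once $l$ is moderately large, not on $l$ at all.

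For the wide regime $r \geq k+1+\lceil(k+1)/2\rceil$, where $\delta_k = \binom{k+2}{2}$, I would use a biperiodic two-dimensional pattern with short row- and column-periods adapted from the perfect $L^1$-sphere packings of $\mathbb{Z}^2$. The target is a coloring in which the $k$-ball of every interior vertex --- a diamond of $2k^2+2k+1$ cells --- hits each of the $\binom{k+2}{2}$ colors, and in which the periods align with the two long side walls so that the smaller $k$-balls of boundary and corner vertices also see every color. The threshold $r \geq k+1+\lceil(k+1)/2\rceil$ is precisely what is needed so that the adjustments required at the top-of-column and bottom-of-column corners use disjoint sets of pattern cells and do not interfere with each other.

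For the thin regime $r \leq k+2$, the $k$-ball of any vertex already spans the full width of the grid (with a shortfall of at most $k+2-r$ columns on each horizontal end), so the problem essentially reduces to one dimension. I would construct, for each fixed $r$, a column-periodic coloring of period $\delta_k(G_{r,l})/r$ (using a vertical diagonal shift whenever $r$ does not divide $\delta_k$) and verify column by column that every horizontal window reachable from a given vertex contains each of the $\delta_k$ colors at least once.

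The hardest step will be the wide regime: I must check that after choosing the biperiodic pattern, \emph{every} vertex --- including those whose $k$-balls simultaneously touch one long side and one short end --- still sees every color within distance $k$. This is precisely where ``all but finitely many $l$'' becomes unavoidable, because for short $l$ the two short-end boundary effects overlap and no consistent biperiodic pattern can satisfy the required color count at both ends at once; for the uncovered middle range $k+2 < r < k+1+\lceil(k+1)/2\rceil$ the analogous corner-adjustment argument fails even for large $l$, which is why the statement excludes that range.
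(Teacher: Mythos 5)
There is a genuine gap: what you have written is a plan whose central objects are never constructed, and the specific constructions you gesture at do not straightforwardly exist. Your reduction (exhibit a coloring with $\delta_k(G_{r,l})$ colors, computed at a corner) and your closed forms for $\delta_k$ agree with the paper. But in the wide regime the analogy with perfect $L^1$-sphere packings is misleading: a perfect Lee packing uses $2k^2+2k+1$ colors, whereas here you must use only $\binom{k+2}{2}$ colors, and the binding constraint is the corner vertex, whose $k$-ball contains \emph{exactly} $\binom{k+2}{2}$ vertices and must therefore be rainbow. A biperiodic pattern that makes every interior diamond see all colors does not make the staircase-shaped corner ball rainbow, and any ``adjustment'' near a corner changes colors lying inside the $k$-balls of many other vertices, so it is not clear that a local repair exists at all; you assert that the threshold $r \geq k+1+\lceil(k+1)/2\rceil$ is ``precisely what is needed'' for the corner repairs to be disjoint, but you derive neither the repairs nor the threshold. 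The paper's route is quite different: it builds an explicit $(k+1)\times(k+2)$ block in which each of the $\binom{k+2}{2}$ colors appears exactly twice, at antipodal positions, so that every vertex of the block is within distance $k$ of one occurrence of each color; larger grids are then covered by \emph{overlapping} reflected copies of this block, and the consistency of the overlaps is the content of the technical permutation lemmas (the commuting operators $h_s$ and $v_t$). The threshold $k+1+\lceil(k+1)/2\rceil = 2(k+1)-\lfloor(k+1)/2\rfloor$ falls out of the fact that two height-$(k+1)$ blocks can share at most $\lfloor(k+1)/2\rfloor$ rows without a color being forced to repeat in the shared region.

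The thin regime has the same problem in one dimension: a column-periodic coloring of period $\delta_k/r$ must simultaneously make the $k$-balls of the top-left and bottom-left corners rainbow, and these are different staircase regions; a plain periodic shift does not obviously achieve both, which is why the paper again uses an antipodally symmetric block (of size $r\times(2k-r+3)$) and extends it by overlapping reflections. Two smaller inaccuracies: for $r=k+2$ the corner's $k$-ball does \emph{not} span all $r$ rows (it misses one), and this case is not really ``one-dimensional'' --- the paper handles $r=k+2$ by transposing and invoking the wide-regime construction with the short side equal to $k+2$; and the finitely many exceptional $l$ arise not from an impossibility at short $l$ but simply from the gap between the base block length and the smallest length at which two blocks can legally overlap, a range the paper leaves undecided in general (and shows, for $k=3$, sometimes still admits a full coloring). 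To turn your outline into a proof you would need to produce the actual colorings and verify the corner rainbow condition everywhere, which is essentially all of the work.
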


This is our main result, and it is a straight-forward consequence of Lemmas~\ref{up},~\ref{mainthm}, and~\ref{small}. In Section~\ref{upsection}, we calculate the minimum $k$-distance degree of every grid graph to establish the upper bound on the $k$-distance domatic numbers. In Section~\ref{blocksection}, we define explicit vertex colorings of each grid graph of the form $G_{r,2k-r+3}$ for $r \leq k+1$ and demonstrate that these colorings are proper $k$-distance domatic. In Section~\ref{extsection}, we extend this coloring for $r=k+1$ to cases for larger $r$ and $l$. In Section~\ref{smallsection}, we extend the colorings given in Section~\ref{blocksection} for larger values of $l$ whenever $r \leq k$. In Section~\ref{k=3}, we calculate the $3$-distance domatic numbers for all grids. In Section~\ref{infsection}, we calculate the $k$-distance domatic numbers for the infinite grid graph.

\section{Upper bound}
\label{upsection}

\begin{lemma}
\label{up}
For any integers $l \geq r \geq 1$, \[d_k(G_{r,l}) \leq \sum_{i=0}^{r-1} \max{\left\{0,\min{\left\{l,k+1-i\right\}}\right\}}.\] In particular, \[d_k(G_{r,l}) \leq r(k+1) - \frac{r(r-1)}{2}\] if $r \leq k+1 \leq l$, and \[d_k(G_{r,l}) \leq \frac{(k+1)(k+2)}{2}\] when $k+1 \leq r \leq l$.
\end{lemma}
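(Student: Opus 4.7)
The plan is to derive the inequality by evaluating $f_k$ at a single carefully chosen vertex, namely the corner $(1,1)$, and invoking the standard fact that $d_k(G) \leq f_k(x)$ for every vertex $x$ (which is precisely the reasoning that underlies the bound $d_k(G) \leq \delta_k(G)$ stated in the definitions section). Indeed, in any proper $k$-distance domatic coloring each color class must contain a vertex within distance $k$ of $(1,1)$, so the number of colors cannot exceed $f_k(1,1)$.

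To compute $f_k(1,1)$, I would use that the graph distance in $G_{r,l}$ coincides with the $\ell_1$ distance, so $d((a,b),(1,1)) = (a-1) + (b-1)$ for every $(a,b) \in V(G_{r,l})$. Hence the closed $k$-ball around $(1,1)$ is exactly $\{(a,b) : 1 \leq a \leq r,\ 1 \leq b \leq l,\ a+b \leq k+2\}$. Summing over rows $a = 1, \ldots, r$, the number of admissible values of $b$ in row $a$ is $\max\{0,\min\{l, k+2-a\}\}$, and re-indexing by $i = a - 1$ produces the formula asserted in the lemma.

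To obtain the two closed-form bounds, I would do a short case analysis. When $r \leq k+1 \leq l$, each $i \in \{0,\ldots,r-1\}$ satisfies $1 \leq k+1-i \leq k+1 \leq l$, so the $\max$ and $\min$ are vacuous and the sum collapses to $\sum_{i=0}^{r-1}(k+1-i) = r(k+1) - r(r-1)/2$. When $k+1 \leq r \leq l$, the summand $k+1-i$ is positive precisely for $0 \leq i \leq k$ and in that range is at most $k+1 \leq l$, so the sum truncates to $\sum_{i=0}^{k}(k+1-i) = (k+1)(k+2)/2$.

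There is no substantive obstacle here; the argument is a straightforward counting exercise combined with the standard ball-size upper bound. The only care needed is the bookkeeping in the case analysis to determine which terms of the sum vanish and which are pinned by $l$ versus by $k+1-i$, but the hypothesis $r \leq l$ and the explicit ranges make this routine.
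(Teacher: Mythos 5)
Your proof is correct and follows essentially the same route as the paper: bound $d_k(G_{r,l})$ by the size of the distance-$k$ ball around the corner $(1,1)$, count that ball row by row using the $\ell_1$ distance formula, and then specialize the resulting sum in the two stated regimes. The only cosmetic difference is that you invoke $d_k(G)\leq f_k((1,1))$ directly rather than routing through $\delta_k$, and you spell out the case analysis for the closed forms, which the paper leaves implicit.
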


\begin{proof}
The $k$-domatic number of any graph $G$ is at most its minimum $k$-degree, $d_k(G) \leq \delta_k(G)$. In particular, this minimum $k$-degree is achieved by the corner vertices of $G_{r,l}$. The corner vertex $(1,1)$ is within distance $k$ of $\min{\left\{l,k+1-(i-1)\right\}}$ other vertices in the $i$th row for any $i \leq k+1$ and $0$ for any $i \geq k+2$. Therefore, it is within distance $k$ of \[\sum_{i=0}^{\min{\{r-1,k\}}}\min{\left\{l,k+1-i\right\}}=\sum_{i=0}^{r-1} \max{\left\{0,\min{\left\{l,k+1-i\right\}}\right\}}\] vertices total.
\end{proof}

\section{The standard block coloring of $G_{r,2k-r+3}$ for $r \leq k+1$}
\label{blocksection}

For convenience, we assume an ordering on the vertices of any grid graph $G_{r,l}$ given by $(x_1,y_1) \leq (x_2,y_2)$ if and only if $x_1 < x_2$ or $x_1=x_2$ and $y_1 \leq y_2$.

Let $1 \leq r \leq k+1$, and let \[N_r = \left(k+1-\frac{r-1}{2}\right)r.\] Let $[N_r] = \{1,\ldots,N_r\}$. Note that $2N_r = r(2k-r+3)$. We now define a coloring of the vertices of $G_{r,2k-r+3}$ which can be thought of as simply coloring the first $N_r$ vertices colors $1,\ldots,N_r$ in order, then coloring the last $N_r$ vertices with the colors $1,\ldots,N_r$ in reverse order. This coloring is illustrated in Figure~\ref{block}.

\begin{definition}
The vertex-coloring $\varphi_r:V(G_{r,2k-r+3}) \rightarrow [N_r]$ given by \[\varphi_r((j,i)) = \left\{
        \begin{array}{ll}
           (i-1)r+j & \quad 1 \leq  (i-1)(k+1)+j \leq N_r \\
           2N_r+1 - (i-1)r-j & \quad N_r+1 \leq  (i-1)r+j \leq 2N_r
        \end{array}
    \right. \] is the \textbf{standard block coloring} of $G_{r,2k-r+3}$. When $r=k+1$, then we drop the $r$ subscript and denote the coloring of $G_{k+1,k+2}$ with $\varphi$.
\end{definition}

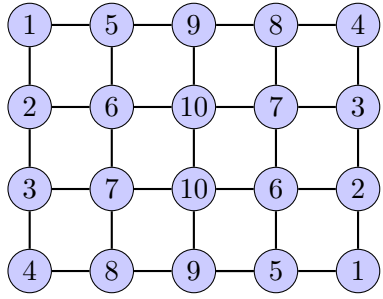
\begin{figure}
\tikzset{state/.style={circle,fill=blue!20,draw,minimum size=1.5em,inner sep=0pt},
            }
\begin{tikzpicture}

    \node[state] (1) {1};
    \node[state] (2) [below=0.5cm of 1] {2};
    \node[state] (3) [below =0.5cm of 2] {3};
    \node[state] (4) [below =0.5cm of 3] {4};
    \node[state] (5) [right =0.5cm of 1] {5};
    \node[state] (6) [below=0.5cm of 5 ] {6};
    \node[state] (7) [below=0.5cm of 6] {7};
    \node[state] (8) [below =0.5cm of 7] {8};
    \node[state] (9) [right =0.5cm of 5] {9};
    \node[state] (10) [below =0.5cm of 9] {10};
    \node[state] (11) [below =0.5cm of 10] {10};
    \node[state] (12) [below =0.5cm of 11] {9};
    \node[state] (13) [right =0.5cm of 9] {8};
    \node[state] (14) [below =0.5cm of 13] {7};
    \node[state] (15) [below =0.5cm of 14] {6};
    \node[state] (16) [below =0.5cm of 15] {5};
    \node[state] (17) [right =0.5cm of 13] {4};
    \node[state] (18) [below =0.5cm of 17] {3};
    \node[state] (19) [below =0.5cm of 18] {2};
    \node[state] (20) [below =0.5cm of 19] {1};
    
        \path[draw,thick]
    (1) edge node {} (2)
    (2) edge node {} (6)
    (2) edge node {} (3)
    (3) edge node {} (4)
    (3) edge node {} (7)
    (4) edge node {} (8)
    (1) edge node {} (5)
    (5) edge node {} (6)
    (5) edge node {} (9)
    (6) edge node {} (10)
    (6) edge node {} (7)
    (7) edge node {} (11)
    (7) edge node {} (8)
    (8) edge node {} (12)
    (10) edge node {} (11)
    (9) edge node {} (10)
    (11) edge node {} (12)
    (12) edge node {} (16)
    (13) edge node {} (9)
    (13) edge node {} (14)
    (14) edge node {} (15)
    (15) edge node {} (16)
    (10) edge node {} (14)
    (11) edge node {} (15)
    (12) edge node {} (16)
    (17) edge node {} (13)
    (18) edge node {} (14)
    (19) edge node {} (15)
    (20) edge node {} (16)
    (17) edge node {} (18)
    (18) edge node {} (19)
    (19) edge node {} (20);
    
    \end{tikzpicture}
    \caption{The standard block coloring $\varphi$ of $G_{4,5}$ for $k=3$.}
    \label{block}
    \end{figure}

\begin{lemma}
\label{blockproof}
The standard block coloring of $G_{r,2k-r+3}$ is a proper $k$-domatic coloring.
\end{lemma}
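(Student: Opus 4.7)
The plan is to show that every vertex $v$ of $G_{r,2k-r+3}$ sees every color $c \in [N_r]$ within distance $k$. The structural fact to exploit is that each color is used by $\varphi_r$ at exactly two vertices: once at a ``forward'' vertex $v_1(c) = (j_1, i_1)$ with $(i_1-1)r + j_1 = c$, and once at a ``reverse'' vertex. A short direct computation from the defining formula, equating $2N_r + 1 - (i_2-1)r - j_2 = c$ with $(i_1-1)r + j_1 = c$ and using $2N_r = r(2k-r+3)$, identifies the reverse vertex as $v_2(c) = (r+1-j_1,\, 2k-r+4 - i_1)$, namely the $180^{\circ}$ rotation of $v_1$ through the center of the grid. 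I would begin the proof by recording this identification.

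The main step---and what I expect to be the only real work---is the inequality $d(v, v_1) + d(v, v_2) \le 2k + 1$ for every $v = (j, i)$. Since the graph distance on the grid is the $\ell^1$ distance, this sum splits coordinate-wise into a row part $|j - j_1| + |j - (r+1-j_1)|$ and a column part $|i - i_1| + |i - (2k-r+4 - i_1)|$. Applying the elementary identity $|a - x| + |a - y| = \max(|x - y|,\, |2a - x - y|)$ to each piece rewrites the row part as $\max(|r+1-2j_1|,\, |2j-r-1|)$ and the column part as $\max(|2k-r+4-2i_1|,\, |2i-2k+r-4|)$. Since $j, j_1 \in [1, r]$ the first maximum is at most $r - 1$, and since $i, i_1 \in [1, 2k-r+3]$ the second is at most $2k - r + 2$, giving the bound $(r-1) + (2k-r+2) = 2k+1$.

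Because $d(v, v_1)$ and $d(v, v_2)$ are non-negative integers whose sum is at most the odd number $2k+1$, the smaller of the two is at most $k$, producing a vertex of color $c$ within distance $k$ of $v$. The conceptual reason the argument closes is that the column count $l = 2k - r + 3$ is chosen exactly so that the diameter of $G_{r,l}$ equals $2k+1$, and the two copies of each color sit at antipodal points across that diameter; splitting the diameter in half yields the required bound of $k$. The only subtlety is being careful that the identification of $v_2$ as the central rotation of $v_1$ uses the precise column count $2k-r+3$ and not a more general $l$, which is why this coloring works only on the block $G_{r, 2k-r+3}$ and has to be extended separately for larger grids.
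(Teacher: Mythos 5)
Your proposal is correct and follows essentially the same route as the paper: identify the two occurrences of each color as antipodal vertices $(j,i)$ and $(r+1-j,\,2k-r+4-i)$, bound the sum of the two $\ell^1$ distances coordinate-wise by $(r-1)+(2k-r+2)=2k+1$, and conclude the nearer one is within distance $k$. The paper argues by contradiction with a three-case analysis in each coordinate where you use the identity $|a-x|+|a-y|=\max(|x-y|,|2a-x-y|)$, but this is only a cosmetic difference.
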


\begin{proof}
Let $X$ be some color from $[N_r]$, and let $(j,i)$ be the vertex of $G_{r,2k-r+3}$ for which $X=(i-1)r+j$. Then we claim that \[\varphi_r((j,i))=\varphi_r((r+1-j,2k-r+4-i))=X.\] First, note that
\begin{align*}
(i-1)r+j &\leq N_r\\
N_r+1+(i-1)r+j &\leq 2N_r+1\\
N_r+1 &\leq 2r\left(k+1-\frac{r-1}{2}\right) +1-ir+r-j\\
N_r+1 &\leq (2k-r+4-i-1)r+r+1-j
\end{align*}
and
\begin{align*}
1 &\leq (i-1)r+j\\
2N_r+1 -(i-1)r-j&\leq 2N_r\\
2r\left(k+1-\frac{r-1}{2}\right)+1 -(i-1)r-j &\leq 2N_r\\
(2k-r+4-i-1)r+r+1-j &\leq 2N_r.
\end{align*}
So it follows that
\begin{align*}
\varphi_r((r+1-j,2k-r+4-i)) &=2N_r+1 - (2k-r+4-i-1)r-(r+1-j)\\
&= 2N_r+1-2N_r+ir-r-1+j\\
&=(i-1)r+j\\
&= X,
\end{align*}
as desired.

We now claim that every vertex of $G_{r,2k-r+3}$ is within distance $k$ from either $(j,i)$ or $(r+1-j,2k-r+4-i)$ or both. Since $X$ was an arbitrary color, then this is enough to show that every vertex is within distance $k$ from at least one vertex from each color class.

Pick an arbitrary vertex $(x, y)$ from $G_{r,2k-r+3}$. Note that the distance between $(x,y)$ and another vertex $(a,b)$ can be given by the formula, \[d((r,c),(x,y)) = |a-x|+|b-y|.\] Suppose, towards a contradiction that both $d((x,y),(j,i)) \geq k+1$ and $d((x,y),(r+1-j,2k-r+4-i)) \geq k+1$. Then it follows that \[| j-x | + | i-y |+  | r+1-j-x|+ | 2k-r+4-i-y| \geq 2k+2.\]

Now, we know by our choice of $(j,i)$ that  $i \leq 2k-r+4-i$. There are three possible cases of where $y$ could be located with respect to $i$ and $2k-r+4-i$. First, if $y \leq i$, then \[|i-y|+|2k-r+4-i-y|=2k-r+4-2y \leq 2k-r+2\] since $y \geq 1$. Similarly, if  $i \leq y \leq 2k-r+4-i$, then \[|i-y|+|2k-r+4-i-y| =2k-r+4-2i \leq 2k-r+2\] since $i \geq 1$, and if $i \leq 2k-r+4-i \leq y$, then \[|i-y|+|2k-r+4-i-y|=2y-2k+r-4 \leq 2k-r+2\] since $y \leq 2k-r+3$. Therefore, in all three possible cases, the sum of the horizontal distances is at most $2k-r+2$.

In terms of the vertical distance, either $j \leq r+1-j$ or $ r+1-j \leq j$. In the first case, if $x \leq j \leq r+1-j$, then \[|j-x|+|r+1-j-x| = r+1-2x \leq r-1\] since $x \geq 1$. Similarly, if $j \leq x \leq r+1-j$, then  \[|j-x|+|r+1-j-x| = r+1-2j \leq r-1\] since $j \geq 1$, and if $j \leq r+1-j \leq x$, then  \[|j-x|+|r+1-j-x| = 2x-r-1 \leq r-1\] since $x \leq r$. Therefore, in all three possible cases, the sum of the vertical distances is at most $r-1$. Consequently, \[| j-x | + | i-y |+  | r+1-j-x|+ | 2k-r+4-i-y| \leq 2k-r+2+r-1 = 2k+1,\] which is a contradiction to our beginning assumption that \[| j-x | + | i-y |+  | r+1-j-x|+ | 2k-r+4-i-y| \geq 2k+2.\] Hence, every vertex is within distance $k$ from at least one vertex with the color $X$. So $\varphi_r$ is a proper $k$-domatic coloring.
\end{proof}

\section{Extending the coloring}
\label{extsection}

Consider the set of vertex-colorings of $G_{k+1,k+2}$ that are isomorphic to our block coloring $\varphi$. Each such coloring is given by its assignments of the first $N=\frac{(k+1)(k+2)}{2}$ vertices to $N$ distinct colors since the colors of the remaining $N$ vertices are determined by the first $N$. Therefore, this set can be thought of as the set of permutations $S_N$ for $N=\frac{(k+1)(k+2)}{2}$.

We will extend the coloring $\varphi$ of $G_{k+1,k+2}$ to a more general grid graph $G_{r,l}$, by considering larger grids as, possibly overlapping, copies of $G_{k+1,k+2}$, each colored with some permutation of $\varphi$. Therefore, each vertex will be contained inside some appropriately-colored copy of $G_{k+1,k+2}$. The key will be to ensure that the coloring of each copy of $G_{k+1,k+2}$ agrees with the coloring of any other copy of $G_{k+1,k+2}$ wherever the two copies overlap. This need is what leads to the following section of technical lemmas.

\subsection{Permutation lemmas}

Fix some positive integer $k$, and let $N = \frac{(k+1)(k+2)}{2}$. Let $[N]$ denote the set $\{1,\ldots,N\}$, and let \[S_N = \left\{f:[N] \rightarrow [N] : f \text{ is a bijection}\right\}\] be the set of permutations of the elements of $[N]$.

Given some positive integer $1 \leq s \leq \left\lfloor \frac{k+2}{2} \right\rfloor$, let $h_s:S_N \rightarrow S_N$ be defined by sending each permutation $f$ to the permutation $h_s(f)$ given by \[h_s(f)(x) = \left\{
        \begin{array}{ll}
            f(s(k+1)+1-x) & \quad 1 \leq x \leq s(k+1) \\
            f(x) & \quad s(k+1)+1 \leq x \leq N
        \end{array}
    \right. .\] We can verify that $h_s(f)$ is indeed a permutation.
    
\begin{proposition}
If $f \in S_N$, and $1 \leq s \leq \left\lfloor \frac{k+2}{2} \right\rfloor$ is a positive integer, then $h_s(f) \in S_N$.
\end{proposition}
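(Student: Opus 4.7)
The plan is to exhibit $h_s(f)$ as the composition of $f$ with an explicit permutation $\tau_s$ of $[N]$, which immediately makes $h_s(f) \in S_N$ since $S_N$ is closed under composition.

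First I would verify that the piecewise definition makes sense, i.e.\ that $s(k+1) \le N$, so that $\{1,\ldots,s(k+1)\}$ and $\{s(k+1)+1,\ldots,N\}$ form a partition of $[N]$. Since $s \le \lfloor (k+2)/2 \rfloor \le (k+2)/2$, we have $s(k+1) \le (k+1)(k+2)/2 = N$, so the two pieces of the domain are well-defined subsets of $[N]$.

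Next I would define $\tau_s : [N] \to [N]$ by
\[
\tau_s(x) = \begin{cases} s(k+1)+1-x & 1 \le x \le s(k+1) \\ x & s(k+1)+1 \le x \le N \end{cases}
\]
and check that $\tau_s$ is a permutation of $[N]$. On the upper block $\{s(k+1)+1,\ldots,N\}$ it is the identity, and on the lower block $\{1,\ldots,s(k+1)\}$ it is the reflection $x \mapsto s(k+1)+1-x$, which is an involution on that set (it sends the block to itself bijectively since $1 \le s(k+1)+1-x \le s(k+1)$ whenever $1 \le x \le s(k+1)$). Thus $\tau_s$ is a bijection of $[N]$, in fact an involution.

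Finally, comparing the definitions piece by piece shows $h_s(f)(x) = f(\tau_s(x))$ for all $x \in [N]$, so $h_s(f) = f \circ \tau_s$. Since both $f$ and $\tau_s$ are elements of $S_N$, their composition lies in $S_N$, which gives the claim. There is no real obstacle here; the only subtlety is the bookkeeping check that $s(k+1) \le N$ so that the two cases of the definition do not overlap or leave gaps, which is exactly why the hypothesis $s \le \lfloor (k+2)/2 \rfloor$ appears.
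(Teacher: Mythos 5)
Your proof is correct, but it takes a different route from the paper's. The paper verifies directly that $h_s(f)$ is injective by a case analysis on where $x_1$ and $x_2$ fall relative to $s(k+1)$, and then invokes finiteness to conclude bijectivity. You instead factor $h_s(f) = f \circ \tau_s$, where $\tau_s$ is the explicit involution reflecting the first $s(k+1)$ elements and fixing the rest, and conclude by closure of $S_N$ under composition. Your argument is cleaner and buys something extra: since $\tau_s$ is an involution, $h_s \circ h_s$ is the identity on $S_N$, i.e.\ $h_s^{-1} = h_s$ — a fact the paper later needs and re-derives at the start of the proof of Lemma~\ref{commute}. Your check that $s(k+1) \leq N$ (so the two cases of the definition partition $[N]$) is also a worthwhile piece of bookkeeping that the paper's proof leaves implicit. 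The same factoring strategy would work verbatim for the companion proposition about $v_t$, where the paper instead runs a longer surjectivity argument.
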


\begin{proof}
Let $x_1,x_2 \in [N]$ such that $h_s(f)(x_1) = h_s(x_2)$. If $1 \leq x_1,x_2 \leq s(k+1)$, then
\begin{align*}
f(s(k+1)+1-x_1) &= f(s(k+1)+1-x_2)\\
s(k+1)+1-x_1 &= s(k+1)+1-x_2\\
x_1 &= x_2.
\end{align*}
If $s(k+1)+1 \leq x_1,x_2 \leq N$, then $f(x_1)=f(x_2)$. So $x_1=x_2$. If $1 \leq x_1 \leq s(k+1)$ and $s(k+1)+1 \leq x_2 \leq N$, then
\begin{align*}
f(s(k+1)+1-x_1) &= f(x_2)\\
s(k+1)+1-x_1 &= x_2\\
\end{align*}
Since $1 \leq x_1$, then this implies that $s(k+1)+1 \leq x_2 \leq s(k+1)$, a contradiction. So $h_s(f)$ is injective. Since the domain and codomain are both $[N]$, a finite set, then this implies that $h_s(f)$ is a bijection, and therefore a permutation in $S_N$.
\end{proof}
    
Next, let $R_x$ denote the remainder of an integer $x$ when divided by $k+1$, and let  \[r_x = \left\{
        \begin{array}{ll}
            R_x & \quad 1 \leq R_x \leq k\\
            k+1 & \quad R_x =0
        \end{array}
    \right. .\]Given some positive integer $1 \leq t \leq \left\lfloor \frac{k+1}{2} \right\rfloor$, let $v_t:S_N \rightarrow S_N$ be defined by \[v_t(f)(x) = \left\{
        \begin{array}{ll}
            f(x+k+1-t) & \quad 1 \leq x \leq (k+1)\left\lfloor \frac{k+2}{2} \right\rfloor \text{ and } 1 \leq r_x \leq t\\
           f(x) & \quad 1 \leq x \leq (k+1)\left\lfloor \frac{k+2}{2} \right\rfloor \text{ and } t+1 \leq r_x \leq k+1-t\\
           f(x-k-1+t) & \quad 1 \leq x \leq (k+1)\left\lfloor \frac{k+2}{2} \right\rfloor \text{ and } k+2-t \leq r_x \leq k+1\\
           f((k+1)^2+t+1-x) & \quad (k+1)\left\lfloor \frac{k+2}{2} \right\rfloor + 1  \leq x \leq N \text{ and } 1 \leq r_x \leq t\\
           f(x) & \quad (k+1)\left\lfloor \frac{k+2}{2} \right\rfloor + 1  \leq x \leq N  \text{ and } t+1 \leq r_x \leq \frac{k+1}{2}
        \end{array}
    \right. .\] Note that the final two cases only apply when $k$ is odd. We can verify that $v_t(f)$ is also a permutation of $[N]$.
    
\begin{proposition}
If $f \in S_N$, and $1 \leq t \leq \left\lfloor \frac{k+1}{2} \right\rfloor$ is a positive integer, then $v_t(f) \in S_N$.
\end{proposition}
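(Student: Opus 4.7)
The plan is to mirror the argument for $h_s$: verify that $v_t(f)$ is injective on the finite set $[N]$, hence a bijection. The key observation is that $v_t(f) = f \circ \sigma_t$ for a well-defined position map $\sigma_t : [N] \to [N]$ dictated by the five cases in the definition; since $f$ is already a bijection, it suffices to show that $\sigma_t$ is a bijection.

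To analyze $\sigma_t$, I would partition $[N]$ into the ``rows'' $R_i = \{(i-1)(k+1)+1, \ldots, i(k+1)\}$ for $1 \leq i \leq \lfloor (k+2)/2 \rfloor$, together with the ``half-row'' $R^{*} = \{(k+1)^{2}/2 + 1, \ldots, (k+1)(k+2)/2\}$, which is empty when $k$ is even and has size $(k+1)/2$ when $k$ is odd. The first three cases of the definition describe $\sigma_t$ on the bulk $R_1 \cup \cdots \cup R_{\lfloor(k+2)/2\rfloor}$, and the last two cases describe it on $R^{*}$.

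The first step is to show each $R_i$ is preserved by $\sigma_t$ and that $\sigma_t|_{R_i}$ is an involution. A position $x \in R_i$ with $r_x \in [1,t]$ has the form $x = (i-1)(k+1) + r_x$, and is sent to $y = x + (k+1-t) = (i-1)(k+1) + (r_x + k+1-t)$; since $r_x + (k+1-t) \in [k+2-t, k+1]$, we get $y \in R_i$ with $r_y \in [k+2-t, k+1]$. Applying $\sigma_t$ to $y$ then falls under the third case and returns $y - (k+1-t) = x$. So $\sigma_t$ swaps the first $t$ and last $t$ entries of $R_i$ (in reversed order) and fixes the middle $k+1-2t$ entries, giving an involution of $R_i$.

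The second step is the analogous claim for $R^{*}$ when $k$ is odd. Writing $x = (k+1)^{2}/2 + j$ with $j \in [1,(k+1)/2]$, the position $x$ has $r_x = j$, and for $j \in [1,t]$ the fourth case sends $x$ to $(k+1)^{2} + t + 1 - x = (k+1)^{2}/2 + (t+1-j)$. Since $t+1-j \in [1,t] \subseteq [1,(k+1)/2]$, using the standing bound $t \leq \lfloor(k+1)/2\rfloor$, the image lies in $R^{*}$ with residue $t+1-j$, so applying $\sigma_t$ again returns $x$. Thus $\sigma_t|_{R^{*}}$ is also an involution. Combining the two steps, $\sigma_t$ is a disjoint union of involutions on the rows and the half-row, hence a bijection of $[N]$, and $v_t(f) \in S_N$. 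The only genuine bookkeeping hurdle is keeping track of the residue ranges and checking that each image remains inside its row; the bound $t \leq \lfloor(k+1)/2\rfloor$ is exactly what keeps the first-$t$ and last-$t$ blocks disjoint within a row and prevents any residue-range collision.
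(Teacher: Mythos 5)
Your proof is correct and takes essentially the same route as the paper: the paper verifies surjectivity of $v_t(f)$ by exhibiting, for each $z=f^{-1}(y)$, a position $x$ with $v_t(f)(x)=f(z)$ via exactly the residue-range computations you perform, while you merely repackage this as the statement that the position map $\sigma_t$ is an involution on each row $R_i$ and on the half-row $R^*$. One cosmetic slip: on $R_i$ the map $j \mapsto j+(k+1-t)$ carries the first $t$ residues to the last $t$ in an order-preserving (not reversed) fashion, but this does not affect the involution claim or the conclusion.
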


\begin{proof}
We will show that $v_t(f):[N] \rightarrow [N]$ is surjective, and therefore, a bijection since it's domain and codomain have the same finite cardinality. Let $y \in [N]$. Then there exists a $z \in [N]$ such that $f(z)=y$ since $f$ is a bijection.

\textbf{Case 1: $\mathbf{1 \leq z \leq (k+1)\left\lfloor \frac{k+2}{2} \right\rfloor}$}. If $1 \leq r_z \leq t$, then let $z = m(k+1)+r_z$ for some $0 \leq m \leq \left\lfloor \frac{k+2}{2} \right\rfloor -1$. Let \[x = z+k+1-t = m(k+1)+r_z+k+1-t.\] Then \[k+2-t \leq r_z+k+1-t \leq k+1,\] so $k+2-t \leq r_x \leq k+1$. Moreover, \[x \geq k+2-\left\lfloor \frac{k+1}{2} \right\rfloor = \left\lceil \frac{k+1}{2} \right\rceil + 1 \geq 1,\] and \[x \leq \left(\left\lfloor \frac{k+2}{2} \right\rfloor -1\right)(k+1)+t+k+1-t = (k+1)\left\lfloor \frac{k+2}{2} \right\rfloor.\] Hence, \[v_t(f)(x) = f(x-k-1+t) = f(z)=y.\] Next, if $t+1 \leq r_z \leq k+1-t$, then we get that \[v_t(f)(z)=f(z)=y\] immediately.

Finally, if $k+2-t \leq r_z \leq k+1$, then let $z=m(k+1)+r_z$ where $0 \leq m \leq \left\lfloor \frac{k+2}{2} \right\rfloor - 1$. Now, let \[x = z-k-1+t = (m-1)(k+1) + r_z +t.\] The term $r_z+t$ adds at least $k+2$ and at most $k+1+t$. So $1 \leq r_x \leq t$. Moreover, \[x \geq (m-1)(k+1) + k+2 \geq -k-1+k+2=1,\] and \[x \leq m(k+1)+t \leq \left( \left\lfloor \frac{k+2}{2} \right\rfloor -1 \right)(k+1)+\left\lfloor \frac{k+1}{2} \right\rfloor  \leq (k+1)\left\lfloor \frac{k+2}{2} \right\rfloor.\] Therefore, \[v_t(f)(x) = f(x+k+1-t)=f(z)=y.\]

\textbf{Case 2: $\mathbf{(k+1)\left\lfloor \frac{k+2}{2} \right\rfloor+1 \leq z \leq N}$}. Since we may now assume that $k$ is odd, then we know that \[\frac{(k+1)^2}{2} +1 \leq z \leq \frac{(k+1)^2}{2} + \frac{k+1}{2}.\] If $1 \leq r_z \leq t$, then it follows that  \[\frac{(k+1)^2}{2} +1 \leq z \leq \frac{(k+1)^2}{2} +t.\] Let $x = (k+1)^2 +t+1-z$, then \[(k+1)^2+t+1 - \frac{(k+1)^2}{2} -1\geq x \geq (k+1)^2+t+1 - \frac{(k+1)^2}{2} -t.\] Therefore,  \[\frac{(k+1)^2}{2} +1 \leq x \leq \frac{(k+1)^2}{2} +t.\] And so \[v_t(f)(x) = f((k+1)^2+t+1-x)=f(z)=y.\] If instead \[\frac{(k+1)^2}{2} +t+1 \leq z \leq \frac{(k+1)^2}{2} + \frac{k+1}{2},\] then \[v_t(f)(z)=f(z)=y.\]
\end{proof}
    
We will now show that mappings $h_s$ and $v_t$ commute under function composition.
    
\begin{lemma}
\label{commute}
Let $f \in S_N$, and fix two positive integers $s$ and $t$ such that $1 \leq s \leq \left\lfloor \frac{k+2}{2} \right\rfloor$ and $1 \leq t \leq \left\lfloor \frac{k+1}{2} \right\rfloor$. Then \[v_t(h_s(f)) = h_s(v_t(f)).\]
\end{lemma}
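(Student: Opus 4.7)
The plan is to reduce the claimed identity to a commutation of two underlying permutations of $[N]$. Observe that both $h_s$ and $v_t$ are precomposition operators: there exist permutations $\sigma_s, \tau_t \in S_N$ (depending only on $k,s,t$, not on $f$) such that $h_s(f) = f \circ \sigma_s$ and $v_t(f) = f \circ \tau_t$ for every $f \in S_N$. Reading off the definitions, $\sigma_s$ is the involution that reverses the prefix $[1, s(k+1)]$ and fixes everything outside it, while $\tau_t$ is the involution that, in each complete ``row'' $\{m(k+1)+1,\ldots,(m+1)(k+1)\}$ with $0 \leq m \leq \lfloor(k+2)/2\rfloor - 1$, swaps the first $t$ positions in order with the last $t$ positions (and fixes the middle block $[t+1,k+1-t]$), and, when $k$ is odd, additionally reverses the first $t$ positions of the partial final row. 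With these definitions,
\[
 v_t(h_s(f))(x) = f\bigl(\sigma_s(\tau_t(x))\bigr) \quad\text{and}\quad h_s(v_t(f))(x) = f\bigl(\tau_t(\sigma_s(x))\bigr),
\]
so it suffices to show that $\sigma_s \circ \tau_t = \tau_t \circ \sigma_s$ as permutations of $[N]$.

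For the commutation I would do a case analysis on $x = m(k+1) + j$ with $0 \leq m \leq \lfloor(k+2)/2\rfloor$ and $1 \leq j = r_x \leq k+1$. Split on whether $m \leq s-1$ (then $\sigma_s$ sends row $m$ to row $s-1-m$ via the position reflection $j \mapsto k+2-j$) or $m \geq s$ (then $\sigma_s$ fixes $x$); and on whether $j$ lies in $[1,t]$, $[t+1,k+1-t]$, or $[k+2-t,k+1]$. In the generic nontrivial case $m \leq s-1$ and $j \in [1,t]$, one checks that both compositions send $x$ to $(s-1-m)(k+1) + (1+t-j)$: on the one hand $\tau_t$ sends $x$ into position $j+k+1-t$ of the same row, which $\sigma_s$ then reflects to position $1+t-j$ of row $s-1-m$; on the other hand $\sigma_s$ sends $x$ to position $k+2-j \in [k+2-t,k+1]$ of row $s-1-m$, which $\tau_t$ then shifts back to position $1+t-j$. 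The case $j \in [k+2-t, k+1]$ is symmetric, and the middle case $j \in [t+1,k+1-t]$ is trivial because $\tau_t$ is the identity on that block and $\sigma_s$ preserves it.

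The main obstacle is the bookkeeping for the partial final row when $k$ is odd. Here one needs the constraint $s \leq \lfloor(k+2)/2\rfloor = (k+1)/2$ to conclude $s(k+1) \leq (k+1)^2/2 = (k+1)\lfloor(k+2)/2\rfloor$, so that $\sigma_s$ acts as the identity on every $x$ lying in the partial row. Since the definition of $\tau_t$ also sends the partial row to itself, the commutation on this range collapses to $\tau_t(x) = \tau_t(x)$. The remaining bookkeeping is to confirm that in the case $s \leq m \leq \lfloor(k+2)/2\rfloor - 1$ (a complete row that $\sigma_s$ fixes), $\tau_t(x)$ also lies in row $m$ and is therefore fixed by $\sigma_s$, so both sides equal $\tau_t(x)$. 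Apart from checking these range conditions, the argument is a routine tracking of indices exploiting the fact that $\sigma_s$ and $\tau_t$ act on each row by involutions which both swap the blocks $[1,t]$ and $[k+2-t,k+1]$ (order-reversingly for $\sigma_s$, order-preservingly for $\tau_t$) and therefore commute at the row level.
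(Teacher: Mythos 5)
Your proposal is correct, and it reorganizes the argument in a genuinely cleaner way than the paper. You first observe that $h_s$ and $v_t$ are both precomposition operators, $h_s(f)=f\circ\sigma_s$ and $v_t(f)=f\circ\tau_t$ for fixed involutions $\sigma_s,\tau_t$ of $[N]$ (the prefix reversal and the per-row block swap, respectively), so that the operator identity $v_t(h_s(f))=h_s(v_t(f))$ for all $f$ is equivalent to the single combinatorial statement $\sigma_s\circ\tau_t=\tau_t\circ\sigma_s$. The paper instead works directly with the operators: it notes $h_s^{-1}=h_s$ and verifies $h_s(v_t(h_s(f)))(x)=v_t(f)(x)$ by a long chain of inequalities checking, for each branch of each piecewise definition, that the composed argument lands in the right range. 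Your reduction buys a substantial simplification: once you write $x=m(k+1)+j$, the commutation becomes transparent because $\sigma_s$ acts on the prefix by a row flip $m\mapsto s-1-m$ together with the position reversal $j\mapsto k+2-j$, while $\tau_t$ acts only on the position coordinate, and the position reversal visibly conjugates the order-preserving block shift $[1,t]\to[k+2-t,k+1]$ to itself (your computation $1+t-j$ from both sides). The two range checks you flag — that $s\le\lfloor(k+2)/2\rfloor$ forces $s(k+1)\le(k+1)\lfloor(k+2)/2\rfloor$ so $\sigma_s$ is the identity on the partial final row when $k$ is odd, and that $\tau_t$ preserves each complete row so rows with $m\ge s$ are untouched by $\sigma_s$ before and after — are exactly the points where the hypotheses on $s$ and $t$ enter, and they are handled correctly. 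The case analysis you perform is, at bottom, the same one the paper does, but carried out on index permutations rather than on colorings, which eliminates the repeated verification that the correct branch of $h_s$ applies inside each branch of $v_t$.
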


\begin{proof}
We will prove this by showing that $h_s^{-1}(v_t(h_s(f)))(x) = v_t(f)(x)$ for any $f\in S_N$ and any $x \in [N]$. First, note that $h_s^{-1}=h_s$ since $1 \leq x \leq s(k+1)$ implies that $1 \leq s(k+1)+1-x \leq s(k+1)$ so \[h_s(h_s(f))(x) = \left\{
        \begin{array}{ll}
            h_s(f)(s(k+1)+1-x) & \quad 1 \leq x \leq s(k+1) \\
            h_s(f)(x) & \quad s(k+1)+1 \leq x \leq N
        \end{array}
    \right. \]  which in turn gives $f(x)$ in each case. Therefore, we must show that $h_s(v_t(h_s(f)))(x)=v_t(f)(x)$ for any $x \in [N]$.
    
\textbf{Case 1: $\mathbf{1 \leq x \leq s(k+1)}$}. In this case, \[h_s(v_t(h_s(f)))(x) = v_t(h_s(f))(s(k+1)+1-x)\] and \[v_t(h_s(f))(s(k+1)+1-x) =  \left\{
        \begin{array}{ll}
           h_s(f)(s(k+1)+1-x+k+1-t) & \quad 1 \leq r \leq t\\
           h_s(f)(s(k+1)+1-x) & \quad t+1 \leq r \leq k+1-t\\
           h_s(f)(s(k+1)+1-x-k-1+t) & \quad k+2-t \leq r \leq k+1
        \end{array}
    \right. \] where \[r=r_{s(k+1)+1-x}.\] Note that $1 \leq x \leq s(k+1)$ implies \[1 \leq s(k+1)+1-x \leq s(k+1) \leq (k+1)\left\lfloor \frac{k+2}{2} \right\rfloor\] so the final two cases in the definition of $v_t$ are unnecessary here. Now, in the first case we note that
\begin{align*}
(s+1)(k+1)-t &\geq (s+1)(k+1) - \left\lfloor \frac{k+1}{2} \right\rfloor\\
&= s(k+1) + \left\lceil \frac{k+1}{2} \right\rceil\\
&\geq s(k+1) \geq x.
\end{align*}
Therefore, 
\begin{align*}
x &\leq (s+1)(k+1) - t\\
1 &\leq s(k+1)+1-x+k+1-t.
\end{align*}
Moreover, we know that $s(k+1)+1-x = m(k+1)+r$ for some $0 \leq m \leq s-1$ and $1 \leq r \leq t$. Hence,
\begin{align*}
s(k+1)+1-x+k+1-t &= m(k+1)+r +k+1-t\\
&= (m+1)(k+1) + (r-t)\\
&\leq s(k+1)
\end{align*}
since $r-t \leq 0$ and $m+1 \leq s$. Therefore, \[h_s(f)(s(k+1)+1-x+k+1-t) = f(x-k-1+t)\] in the first case.

Next, the second case is straight-forward since $1 \leq s(k+1)+1-x \leq s(k+1)$. There, we get that \[h_s(f)(s(k+1)+1-x) = f(x).\]

In the last case, we let $s(k+1)+1-x = m(k+1)+r$ for some $0 \leq m \leq s-1$ and some $k+2-t \leq R \leq k+1$. Then we get that
\begin{align*}
s(k+1)+1-x-k-1+t &= (m-1)(k+1)+r+t\\
&\geq -k-1+k+2-t+t=1,\\
\end{align*}
and
\begin{align*}
s(k+1)+1-x-k-1+t &= (m-1)(k+1)+r+t\\
&\leq (s-2)(k+1)+k+1+t\\
&\leq (s-1)(k+1) + \left\lfloor \frac{k+1}{2} \right\rfloor\\
&\leq s(k+1).
\end{align*}
Therefore, in these final cases we get that \[h_s(f)(s(k+1)+1-x-k-1+t) = f(x+k+1-t).\] Hence, all together we get that \[h_s(v_t(h_s(f))(x)) =  \left\{
        \begin{array}{ll}
          f(x-k-1+t) & \quad 1 \leq r \leq t\\
          f(x) & \quad t+1 \leq r \leq k+1-t\\
          f(x+k+1-t) & \quad k+2-t \leq r \leq k+1
        \end{array}
    \right. .\] Here, the $r$ is still standing for $r_{s(k+1)+1-x}$. Let $s(k+1)+1-x = m(k+1) + r$ for some $0 \leq m \leq s-1$. Then \[x = (s-m)(k+1) + 1-r = (s-m-1)(k+1) +(k+2-r).\] If $1 \leq r \leq t$, then \[k+2-t \leq k+2-r \leq k+1.\] Hence, $k+2-t \leq r_x \leq k+1$. If $t+1 \leq r \leq k+1-t$, then \[t+1 \leq k+2-r \leq k+1-t\] also and so $t+1 \leq r_x \leq k+1-t$. If $k+2-t \leq r \leq k+1$, then $1 \leq k+2-r \leq t$ and so $1 \leq r_x \leq t$. Thus, \[h_s(v_t(h_s(f))(x)) =  \left\{
        \begin{array}{ll}
          f(x+k+1-t) & \quad 1 \leq r_x \leq t\\
          f(x) & \quad t+1 \leq r_x \leq k+1-t\\
          f(x-k-1+t) & \quad k+2-t \leq r_x \leq k+1
        \end{array}
    \right. .\] This matches $v_t(f)(x)$ for any $1 \leq x \leq s(k+1)$.
    
\textbf{Case 2: $\mathbf{s(k+1)+1 \leq x \leq (k+1) \left\lfloor \frac{k+1}{2} \right\rfloor}$}. In this case $h_s(v_t(h_s(f)))(x) = v_t(h_s(f))(x)$, and \[v_t(h_s(f))(x) = \left\{
        \begin{array}{ll}
           h_s(f)(x+k+1-t) & \quad 1 \leq r_x \leq t\\
           h_s(f)(x) & \quad  t+1 \leq r_x \leq k+1-t\\
           h_s(f)(x-k-1+t) & \quad  k+2-t \leq r_x \leq k+1
        \end{array}
    \right. \]
    
If $1 \leq r_x \leq t$, then let $x=m(k+1)+r_x$ for some $s \leq m \leq \left\lfloor \frac{k+2}{2} \right\rfloor -1$ and we get that
\begin{align*}
x+k+1-t &= m(k+1)+k+1+r_x-t\\
&\geq s(k+1)+\left\lceil \frac{k+1}{2} \right\rceil +1\\
&\geq s(k+1)+1,
\end{align*}
and
\begin{align*}
x+k+1-t &= (m+1)(k+1)+r_x-t\\
&\leq \left( \left\lfloor \frac{k+2}{2} \right\rfloor \right)(k+1).
\end{align*}
Therefore, $h_s(f)(x+k+1-t) = f(x+k+1-t)$ in the first case.

In the second case, we get that $h_s(f)(x) = f(x)$ just by $s(k+1)+1 \leq x \leq (k+1)\left\lfloor \frac{k+2}{2} \right\rfloor$.

In the third case, let $x = m(k+1)+r_x$ for some $s \leq m \leq \left\lfloor \frac{k+2}{2} \right\rfloor -1$ and some $k+2-t \leq r_x \leq k+1$. In this case we get that
\begin{align*}
x-k-1+t &= m(k+1)-k-1)+r_x+t\\
&\geq s(k+1)+-k-1+k+2\\
&\geq s(k+1)+1,
\end{align*}
and
\begin{align*}
x-k-1+t &= (m-1)(k+1)+r_x+t\\
&\leq \left( \left\lfloor \frac{k+2}{2} \right\rfloor -2\right)(k+1) + k+1+t\\
&\leq \left( \left\lfloor \frac{k+2}{2} \right\rfloor \right)(k+1) - (k+1)+ \left\lfloor \frac{k+1}{2} \right\rfloor\\
&=\left( \left\lfloor \frac{k+2}{2} \right\rfloor \right)(k+1) - \left\lceil \frac{k+1}{2} \right\rceil\\
&\leq \left( \left\lfloor \frac{k+2}{2} \right\rfloor \right)(k+1).
\end{align*}
Therefore, $h_s(f)(x-k-1+t) = f(x-k-1+t)$ in the last case. Hence,  \[h_s(v_t(h_s(f)))(x) = \left\{
        \begin{array}{ll}
           f(x+k+1-t) & \quad 1 \leq r_x \leq t\\
           f(x) & \quad  t+1 \leq r_x \leq k+1-t\\
           f(x-k-1+t) & \quad  k+2-t \leq r_x \leq k+1
        \end{array}
    \right. \] whenever $s(k+1)+1 \leq x \leq (k+1)\left\lfloor \frac{k+2}{2} \right\rfloor$, which is equal to $v_t(f)(x)$ for the same values of $x$.
    
\textbf{Case 3: $\mathbf{k}$ is odd and $\mathbf{\frac{(k+1)^2}{2} + 1  \leq x \leq N}$}. In this case, $h_s(v_t(h_s(f)))(x) = v_t(h_s(f))(x)$, and\[v_t(h_s(f))(x) = \left\{
        \begin{array}{ll}
           h_s(f)((k+1)^2+t+1-x) & \quad 1 \leq r_x \leq t\\
           h_s(f)(x) & \quad t+1 \leq r_x \leq \frac{k+1}{2}
        \end{array}
    \right. \]  if $k$ is odd and $\frac{(k+1)^2}{2} + 1  \leq x \leq N$.

If \[\frac{(k+1)^2}{2} + 1  \leq x \leq \frac{(k+1)^2}{2} + t,\] then \[(k+1)^2+t+1-\frac{(k+1)^2}{2} - t \leq (k+1)^2+t+1-x \leq (k+1)^2+t+1-\frac{(k+1)^2}{2} - 1.\] So  \[\frac{(k+1)^2}{2} + 1  \leq (k+1)^2+t+1-x \leq \frac{(k+1)^2}{2} + t.\] Therefore, $h_s(f)((k+1)^2+t+1-x) = f((k+1)^2+t+1-x)$. And if \[\frac{(k+1)^2}{2} + t+1  \leq x \leq \frac{(k+1)^2}{2} + \frac{k+1}{2},\] then $h_s(f)(x)=f(x)$. Hence, $h_s(v_t(h_s(f)))(x) = v_t(f)(x)$ whenever $\frac{(k+1)^2}{2} + 1  \leq x \leq N$ as well. Thus, $h_s(v_t(h_s(f))) = v_t(f)$ for all $f \in S_N$.
\end{proof}

\subsection{Main lemmas}

We will now demonstrate how to color more general grid graphs by using the standard block coloring $\varphi$ on the first $G_{k+1,k+2}$ and then ``extending" it with suitable isomorphic colorings to the remain rows and columns.

\begin{lemma}
\label{c1}
For any positive integer $k$,
\[d_k\left(G_{k+1,k+2+J}\right) \geq \frac{(k+1)(k+2)}{2},\] for any $\left\lceil \frac{k+2}{2} \right\rceil \leq J < k+2$.
\end{lemma}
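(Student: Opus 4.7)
The plan is to construct a proper $k$-distance domatic coloring of $G_{k+1,k+2+J}$ using $N:=\frac{(k+1)(k+2)}{2}$ colors by stitching together two overlapping copies of the standard block coloring of $G_{k+1,k+2}$. Set $s = k+2-J$; the hypothesis $\lceil (k+2)/2 \rceil \leq J < k+2$ is equivalent to $1 \leq s \leq \lfloor (k+2)/2 \rfloor$, which is exactly the range on which $h_s$ is defined. I would place Copy~A on columns $1,\ldots,k+2$ and color it with $\varphi$, and place Copy~B on columns $J+1,\ldots,k+2+J$ and color it with $h_s(\varphi)$ read in Copy~B's own coordinates. Under the identification of valid block-type colorings with elements of $S_N$ (so that $\varphi$ corresponds to the identity), the permutation $h_s(\varphi)$ differs from $\varphi$ only by a relabeling of the $N$ colors, so by Lemma~\ref{blockproof} the restricted coloring on either copy is a proper $k$-distance domatic coloring of $G_{k+1,k+2}$.

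The central obstacle is checking that these two colorings agree on the overlap region $J+1 \leq i \leq k+2$. For a vertex $(j,i)$ in this region, the inequality $i > (k+2)/2$ puts it in the second-half case of $\varphi$'s definition, so Copy~A assigns it the color $2N+1-(i-1)(k+1)-j$. In Copy~B's local coordinates the same vertex is $(j,\,i-J)$, whose local index $(i-J-1)(k+1)+j$ lies in $[1,s(k+1)]$, so $h_s$ flips it to the color $s(k+1)+1-(i-J-1)(k+1)-j$. Substituting $s=k+2-J$, both expressions simplify to $(k+1)(k+3-i)+1-j$, so the two colorings agree on the overlap and together define a single coloring $c$ of $G_{k+1,k+2+J}$.

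To conclude, every vertex of $G_{k+1,k+2+J}$ lies in Copy~A or Copy~B because $J+1 \leq k+2$. On whichever copy contains a given vertex $v$, the restricted coloring is proper $k$-distance domatic, and because the $\ell_1$-distance between two vertices of a single copy is the same whether computed inside the copy or inside the larger grid, the vertex $v$ has, within distance $k$, a representative of each color $X \in [N]$. Hence $c$ is a proper $k$-distance domatic coloring with $N$ colors, proving $d_k(G_{k+1,k+2+J}) \geq \frac{(k+1)(k+2)}{2}$.
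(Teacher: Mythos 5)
Your proof is correct and follows essentially the same approach as the paper: two overlapping copies of $G_{k+1,k+2}$, the first colored with $\varphi$ and the second with $h_s(\varphi)$ for $s=k+2-J$, glued along the overlap. Your explicit computation that both copies assign $(k+1)(k+3-i)+1-j$ on the overlap columns is in fact more careful than the paper's argument, which only observes that the partial coloring inherited by the second copy has no repeated colors and so extends to $h_s(\varphi)$.
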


\begin{proof}
Consider this graph as two copies of $G_{k+1,k+2}$ which overlap in the $s=k+2-J$ middle columns. That is, $V(G_{k+1,k+2+J})= A \cup B$ where $A$ is the set of the first $(k+1)(k+2)$ vertices and $B$ is the set of the last $(k+1)(k+2)$ vertices. Note that these two sets have $|A \cap B| = s(k+1)$ vertices in common. Color the vertices of $A$ with the standard block coloring $\varphi$. Next, we observe that the vertices of $B$ are already partially colored. Specifically, the first $s(k+1)$ vertices received the colors $1,2,\ldots, s(k+1)$ in reverse order.

Since $J \geq \left\lceil \frac{k+2}{2} \right\rceil$, then it follows that $s \leq \left\lfloor \frac{k+2}{2} \right\rfloor$. Therefore, none of the colors given to these first $s$ columns of the graph induced on $B$ repeat. Hence, this partial coloring can be extended to the coloring of $G_{k+1,k+2}$. Specifically, it can be extended to the coloring $h_s(\varphi)$, though there could be several possibilities depending on $s$. In this way, each vertex is now contained within at least one copy of $G_{k+1,k+2}$ under a coloring isomorphic to the standard block coloring. Therefore, each vertex is within distance $k$ from every color. This process of extending the coloring is illustrated in Figure~\ref{hs}.
\end{proof}

\begin{figure}
\begin{subfigure}[b]{0.48\linewidth}
          \centering

          \resizebox{\linewidth}{!}{

\tikzset{state/.style={circle,fill=blue!20,draw,minimum size=1.5em,inner sep=0pt},
            }
\begin{tikzpicture}

\filldraw[fill=blue!70!white, draw=black] (-0.4,0.4) rectangle (4.8,-3.7);

    \node[state] (1) at (0,0)  {1};
    \node[state] (2) [below=0.5cm of 1] {2};
    \node[state] (3) [below =0.5cm of 2] {3};
    \node[state] (4) [below =0.5cm of 3] {4};
    \node[state] (5) [right =0.5cm of 1] {5};
    \node[state] (6) [below=0.5cm of 5 ] {6};
    \node[state] (7) [below=0.5cm of 6] {7};
    \node[state] (8) [below =0.5cm of 7] {8};
    \node[state] (9) [right =0.5cm of 5] {9};
    \node[state] (10) [below =0.5cm of 9] {10};
    \node[state] (11) [below =0.5cm of 10] {10};
    \node[state] (12) [below =0.5cm of 11] {9};
    \node[state] (13) [right =0.5cm of 9] {8};
    \node[state] (14) [below =0.5cm of 13] {7};
    \node[state] (15) [below =0.5cm of 14] {6};
    \node[state] (16) [below =0.5cm of 15] {5};
    \node[state] (17) [right =0.5cm of 13] {4};
    \node[state] (18) [below =0.5cm of 17] {3};
    \node[state] (19) [below =0.5cm of 18] {2};
    \node[state] (20) [below =0.5cm of 19] {1};
    
    \node[state] (41) [right=0.5cm of 17] {};
    \node[state] (42) [below=0.5cm of 41] {};
    \node[state] (43) [below =0.5cm of 42] {};
    \node[state] (44) [below =0.5cm of 43] {};
    \node[state] (45) [right =0.5cm of 41] {};
    \node[state] (46) [below=0.5cm of 45 ] {};
    \node[state] (47) [below=0.5cm of 46] {};
    \node[state] (48) [below =0.5cm of 47] {};
    \node[state] (49) [right =0.5cm of 45] {};
    \node[state] (50) [below =0.5cm of 49] {};
    \node[state] (51) [below =0.5cm of 50] {};
    \node[state] (52) [below =0.5cm of 51] {};

    \path[draw,thick]
    (1) edge node {} (2)
    (2) edge node {} (6)
    (2) edge node {} (3)
    (3) edge node {} (4)
    (3) edge node {} (7)
    (4) edge node {} (8)
    (1) edge node {} (5)
    (5) edge node {} (6)
    (5) edge node {} (9)
    (6) edge node {} (10)
    (6) edge node {} (7)
    (7) edge node {} (11)
    (7) edge node {} (8)
    (8) edge node {} (12)
    (10) edge node {} (11)
    (9) edge node {} (10)
    (11) edge node {} (12)
    (12) edge node {} (16)
    (13) edge node {} (9)
    (13) edge node {} (14)
    (14) edge node {} (15)
    (15) edge node {} (16)
    (10) edge node {} (14)
    (11) edge node {} (15)
    (12) edge node {} (16)
    (17) edge node {} (13)
    (18) edge node {} (14)
    (19) edge node {} (15)
    (20) edge node {} (16)
    (17) edge node {} (18)
    (18) edge node {} (19)
    (19) edge node {} (20)
    
    (17) edge node {} (41)
    (18) edge node {} (42)
    (19) edge node {} (43)
    (20) edge node {} (44)
    
    (41) edge node {} (42)
    (42) edge node {} (46)
    (42) edge node {} (43)
    (43) edge node {} (44)
    (43) edge node {} (47)
    (44) edge node {} (48)
    (41) edge node {} (45)
    (45) edge node {} (46)
    (45) edge node {} (49)
    (46) edge node {} (50)
    (46) edge node {} (47)
    (47) edge node {} (51)
    (47) edge node {} (48)
    (48) edge node {} (52)
    (50) edge node {} (51)
    (49) edge node {} (50)
    (51) edge node {} (52);
\end{tikzpicture}
}
\caption{}
\end{subfigure}
\begin{subfigure}[b]{0.48\linewidth}
          \centering
                    \resizebox{\linewidth}{!}{
\tikzset{state/.style={circle,fill=blue!20,draw,minimum size=1.5em,inner sep=0pt},
            }
\begin{tikzpicture}

\filldraw[fill=blue!70!white, draw=black] (-0.4,0.4) rectangle (4.8,-3.7);
\filldraw[fill=red!50!white, draw=black, opacity=0.6] (2.8,0.4) rectangle (8,-3.7);

    \node[state] (1) at (0,0)  {1};
    \node[state] (2) [below=0.5cm of 1] {2};
    \node[state] (3) [below =0.5cm of 2] {3};
    \node[state] (4) [below =0.5cm of 3] {4};
    \node[state] (5) [right =0.5cm of 1] {5};
    \node[state] (6) [below=0.5cm of 5 ] {6};
    \node[state] (7) [below=0.5cm of 6] {7};
    \node[state] (8) [below =0.5cm of 7] {8};
    \node[state] (9) [right =0.5cm of 5] {9};
    \node[state] (10) [below =0.5cm of 9] {10};
    \node[state] (11) [below =0.5cm of 10] {10};
    \node[state] (12) [below =0.5cm of 11] {9};
    \node[state] (13) [right =0.5cm of 9] {8};
    \node[state] (14) [below =0.5cm of 13] {7};
    \node[state] (15) [below =0.5cm of 14] {6};
    \node[state] (16) [below =0.5cm of 15] {5};
    \node[state] (17) [right =0.5cm of 13] {4};
    \node[state] (18) [below =0.5cm of 17] {3};
    \node[state] (19) [below =0.5cm of 18] {2};
    \node[state] (20) [below =0.5cm of 19] {1};
    
    \node[state] (41) [right=0.5cm of 17] {};
    \node[state] (42) [below=0.5cm of 41] {};
    \node[state] (43) [below =0.5cm of 42] {};
    \node[state] (44) [below =0.5cm of 43] {};
    \node[state] (45) [right =0.5cm of 41] {1};
    \node[state] (46) [below=0.5cm of 45 ] {2};
    \node[state] (47) [below=0.5cm of 46] {3};
    \node[state] (48) [below =0.5cm of 47] {4};
    \node[state] (49) [right =0.5cm of 45] {5};
    \node[state] (50) [below =0.5cm of 49] {6};
    \node[state] (51) [below =0.5cm of 50] {7};
    \node[state] (52) [below =0.5cm of 51] {8};

    \path[draw,thick]
    (1) edge node {} (2)
    (2) edge node {} (6)
    (2) edge node {} (3)
    (3) edge node {} (4)
    (3) edge node {} (7)
    (4) edge node {} (8)
    (1) edge node {} (5)
    (5) edge node {} (6)
    (5) edge node {} (9)
    (6) edge node {} (10)
    (6) edge node {} (7)
    (7) edge node {} (11)
    (7) edge node {} (8)
    (8) edge node {} (12)
    (10) edge node {} (11)
    (9) edge node {} (10)
    (11) edge node {} (12)
    (12) edge node {} (16)
    (13) edge node {} (9)
    (13) edge node {} (14)
    (14) edge node {} (15)
    (15) edge node {} (16)
    (10) edge node {} (14)
    (11) edge node {} (15)
    (12) edge node {} (16)
    (17) edge node {} (13)
    (18) edge node {} (14)
    (19) edge node {} (15)
    (20) edge node {} (16)
    (17) edge node {} (18)
    (18) edge node {} (19)
    (19) edge node {} (20)
    
    (17) edge node {} (41)
    (18) edge node {} (42)
    (19) edge node {} (43)
    (20) edge node {} (44)
    
    (41) edge node {} (42)
    (42) edge node {} (46)
    (42) edge node {} (43)
    (43) edge node {} (44)
    (43) edge node {} (47)
    (44) edge node {} (48)
    (41) edge node {} (45)
    (45) edge node {} (46)
    (45) edge node {} (49)
    (46) edge node {} (50)
    (46) edge node {} (47)
    (47) edge node {} (51)
    (47) edge node {} (48)
    (48) edge node {} (52)
    (50) edge node {} (51)
    (49) edge node {} (50)
    (51) edge node {} (52);
\end{tikzpicture}
          }
\caption{}
\end{subfigure}
\begin{subfigure}[b]{0.48\linewidth}
          \centering
                    \resizebox{\linewidth}{!}{
\tikzset{state/.style={circle,fill=blue!20,draw,minimum size=1.5em,inner sep=0pt},
            }
\begin{tikzpicture}

\filldraw[fill=blue!70!white, draw=black] (-0.4,0.4) rectangle (4.8,-3.7);
\filldraw[fill=red!50!white, draw=black, opacity=0.6] (2.8,0.4) rectangle (8,-3.7);

    \node[state] (1) at (0,0)  {1};
    \node[state] (2) [below=0.5cm of 1] {2};
    \node[state] (3) [below =0.5cm of 2] {3};
    \node[state] (4) [below =0.5cm of 3] {4};
    \node[state] (5) [right =0.5cm of 1] {5};
    \node[state] (6) [below=0.5cm of 5 ] {6};
    \node[state] (7) [below=0.5cm of 6] {7};
    \node[state] (8) [below =0.5cm of 7] {8};
    \node[state] (9) [right =0.5cm of 5] {9};
    \node[state] (10) [below =0.5cm of 9] {10};
    \node[state] (11) [below =0.5cm of 10] {10};
    \node[state] (12) [below =0.5cm of 11] {9};
    \node[state] (13) [right =0.5cm of 9] {8};
    \node[state] (14) [below =0.5cm of 13] {7};
    \node[state] (15) [below =0.5cm of 14] {6};
    \node[state] (16) [below =0.5cm of 15] {5};
    \node[state] (17) [right =0.5cm of 13] {4};
    \node[state] (18) [below =0.5cm of 17] {3};
    \node[state] (19) [below =0.5cm of 18] {2};
    \node[state] (20) [below =0.5cm of 19] {1};
    
    \node[state] (41) [right=0.5cm of 17] {9};
    \node[state] (42) [below=0.5cm of 41] {10};
    \node[state] (43) [below =0.5cm of 42] {10};
    \node[state] (44) [below =0.5cm of 43] {9};
    \node[state] (45) [right =0.5cm of 41] {1};
    \node[state] (46) [below=0.5cm of 45 ] {2};
    \node[state] (47) [below=0.5cm of 46] {3};
    \node[state] (48) [below =0.5cm of 47] {4};
    \node[state] (49) [right =0.5cm of 45] {5};
    \node[state] (50) [below =0.5cm of 49] {6};
    \node[state] (51) [below =0.5cm of 50] {7};
    \node[state] (52) [below =0.5cm of 51] {8};

    \path[draw,thick]
    (1) edge node {} (2)
    (2) edge node {} (6)
    (2) edge node {} (3)
    (3) edge node {} (4)
    (3) edge node {} (7)
    (4) edge node {} (8)
    (1) edge node {} (5)
    (5) edge node {} (6)
    (5) edge node {} (9)
    (6) edge node {} (10)
    (6) edge node {} (7)
    (7) edge node {} (11)
    (7) edge node {} (8)
    (8) edge node {} (12)
    (10) edge node {} (11)
    (9) edge node {} (10)
    (11) edge node {} (12)
    (12) edge node {} (16)
    (13) edge node {} (9)
    (13) edge node {} (14)
    (14) edge node {} (15)
    (15) edge node {} (16)
    (10) edge node {} (14)
    (11) edge node {} (15)
    (12) edge node {} (16)
    (17) edge node {} (13)
    (18) edge node {} (14)
    (19) edge node {} (15)
    (20) edge node {} (16)
    (17) edge node {} (18)
    (18) edge node {} (19)
    (19) edge node {} (20)
    
    (17) edge node {} (41)
    (18) edge node {} (42)
    (19) edge node {} (43)
    (20) edge node {} (44)
    
    (41) edge node {} (42)
    (42) edge node {} (46)
    (42) edge node {} (43)
    (43) edge node {} (44)
    (43) edge node {} (47)
    (44) edge node {} (48)
    (41) edge node {} (45)
    (45) edge node {} (46)
    (45) edge node {} (49)
    (46) edge node {} (50)
    (46) edge node {} (47)
    (47) edge node {} (51)
    (47) edge node {} (48)
    (48) edge node {} (52)
    (50) edge node {} (51)
    (49) edge node {} (50)
    (51) edge node {} (52);
\end{tikzpicture}
          }
\caption{}
\end{subfigure}
\caption{}
\label{hs}
\end{figure}

\begin{lemma}
\label{c2}
For any positive integer $k$,
\[d_k\left(G_{k+1,2(k+2)+J}\right) \geq \frac{(k+1)(k+2)}{2},\] for any $1 \leq J < \left\lceil \frac{k+2}{2} \right\rceil$.
\end{lemma}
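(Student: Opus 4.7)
The plan is to generalize the two-block argument of Lemma~\ref{c1} to three overlapping blocks. I will cover $G_{k+1,2(k+2)+J}$ with three copies of $G_{k+1,k+2}$, each colored with an isomorphic version of the standard block coloring $\varphi$, and then verify that the three colorings agree on overlaps so that they combine into a single proper $k$-distance domatic coloring using $N=\frac{(k+1)(k+2)}{2}$ colors.

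First I choose positive integers $s_1,s_2$ with $s_1+s_2=k+2-J$ and $1\le s_1,s_2\le\left\lfloor\frac{k+2}{2}\right\rfloor$. Such a split exists for every $1\le J<\left\lceil\frac{k+2}{2}\right\rceil$: the desired sum is $k+2-J$, which lies between $2$ and $k+1\le 2\left\lfloor\frac{k+2}{2}\right\rfloor$, so it can be written as a sum of two integers each in $\left[1,\left\lfloor\frac{k+2}{2}\right\rfloor\right]$. I then let $A$, $B$, and $C$ be the induced subgraphs on global columns $[1,k+2]$, $[k+3-s_1,\,2(k+2)-s_1]$, and $[2(k+2)-s_1-s_2+1,\,2(k+2)+J]$ respectively. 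Each is a copy of $G_{k+1,k+2}$; $A\cap B$ consists of $s_1$ columns, $B\cap C$ consists of $s_2$ columns, and $A\cup B\cup C$ covers every vertex. I color $A$ with $\varphi$, $B$ with $h_{s_1}(\varphi)$, and $C$ with $h_{s_2}(\varphi)$.

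The main obstacle is to check consistency on the two overlap regions. The $A\cap B$ agreement is exactly the argument used in Lemma~\ref{c1}: the last $s_1$ columns of $\varphi$ are the pointwise reversal of its first $s_1$ columns (from the palindromic identity $\varphi(p)=\varphi(2N+1-p)$ that is built into the definition of $\varphi$), and by its very definition $h_{s_1}(\varphi)$ places exactly those reversed colors on its first $s_1$ columns. For $B\cap C$, the key observation is that in $B$'s local position index, its last $s_2$ columns occupy indices $(k+2-s_2)(k+1)+1,\ldots,(k+1)(k+2)$, all of which exceed $s_1(k+1)$ whenever $s_1+s_2\le k+2$. Since $s_1+s_2=k+2-J\le k+1$, this inequality holds, so by the definition of $h_{s_1}$ the coloring $h_{s_1}(\varphi)$ agrees with $\varphi$ on these positions. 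Thus $B$'s coloring on its last $s_2$ columns is identical to $\varphi$'s coloring there, which by the palindromic symmetry of $\varphi$ is the pointwise reversal of $\varphi$'s first $s_2$ columns — and that is precisely what $h_{s_2}(\varphi)$ places on the first $s_2$ columns of $C$.

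Once these two overlap identities are established, the three block colorings paste together into a single well-defined coloring of $G_{k+1,2(k+2)+J}$. Every vertex lies in at least one of $A$, $B$, $C$, each of which is colored by a permutation image of $\varphi$ and hence, as a coloring of $G_{k+1,k+2}$, is isomorphic to the standard block coloring. Applying Lemma~\ref{blockproof} inside the appropriate block shows that every vertex is within distance $k$ from at least one vertex of each of the $N$ colors, so $d_k(G_{k+1,2(k+2)+J})\ge N=\frac{(k+1)(k+2)}{2}$, as desired.
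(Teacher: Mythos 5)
Your decomposition into three overlapping blocks $A$, $B$, $C$ and the treatment of the $A\cap B$ overlap match the paper's proof, but the coloring you assign to $C$ is wrong, and the error traces to a misuse of the definition of $h_{s_1}$. A permutation $f\in S_N$ determines a coloring of $G_{k+1,k+2}$ only on the first $N$ positions; a position $q$ with $N+1\le q\le 2N$ receives the color $f(2N+1-q)$ by reflection. The last $s_2$ columns of $B$ occupy positions $q\in[(k+2-s_2)(k+1)+1,\,2N]$, so their colors are $h_{s_1}(\varphi)(2N+1-q)$ with $2N+1-q\in[1,s_2(k+1)]$ --- that is, the reflection lands \emph{inside} the initial segment that $h_{s_1}$ reverses (this always happens at least for the final column, since $2N+1-q\le k+1\le s_1(k+1)$ there). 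Hence $h_{s_1}(\varphi)(2N+1-q)=\varphi(s_1(k+1)+1-(2N+1-q))$, which is not $\varphi(2N+1-q)$: the clause ``$h_s(f)(x)=f(x)$ for $x>s(k+1)$'' is a statement about the permutation's domain $[N]$, not about the coloring of positions beyond $N$. Concretely, for $k=3$, $J=2$, $(s_1,s_2)=(2,1)$: the last column of $B$ under $h_2(\varphi)$ reads $5,6,7,8$, while under $\varphi$ it reads $4,3,2,1$, so your claim that ``$B$'s coloring on its last $s_2$ columns is identical to $\varphi$'s coloring there'' fails, and $h_1(\varphi)$ (whose first column is $4,3,2,1$) does not agree with $B$ on the overlap.

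The repair is exactly what the paper does: color $C$ with $h_{s_2}\bigl(h_{s_1}(\varphi)\bigr)$ rather than $h_{s_2}(\varphi)$, i.e., apply $h_{s_2}$ to the permutation actually used on $B$. The agreement of $h_{s_2}(g)$ with $g$ on an $s_2$-column overlap is the same computation as your (correct) $A\cap B$ argument, applied with $g=h_{s_1}(\varphi)$ in place of $\varphi$; no claim that $h_{s_1}(\varphi)$ restricts to $\varphi$ on the tail is needed. Your more flexible choice of $s_1,s_2$ with $s_1+s_2=k+2-J$ and both in $\bigl[1,\lfloor\frac{k+2}{2}\rfloor\bigr]$ is fine (the paper fixes $s_1=\lfloor\frac{k+2}{2}\rfloor$ and $s_2=\lceil\frac{k+2}{2}\rceil-J$), but with the coloring of $C$ as you wrote it the pasted coloring is not well defined, so the proof as stated does not go through.
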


\begin{proof}
Consider this graph as three copies of $G_{k+1,k+2}$. The first copy, call it graph $A$, is comprised of the first $k+2$ columns. The second copy, call it graph $B$, is comprised of the $k+2$ columns which begin at column $\left\lceil \frac{k+2}{2} \right\rceil +1$ and end with column $\left\lceil \frac{k+2}{2} \right\rceil +k+2$. $B$ overlaps with $A$ in \[s_1 = \left\lfloor \frac{k+2}{2} \right\rfloor\] columns - the first $s_1$ columns of $B$ are the same as the final $s_1$ columns of $A$. The third copy, call it graph $C$, is comprised of the final $k+2$ columns which means that it overlaps $B$ in \[s_2 = \left\lceil \frac{k+2}{2} \right\rceil - J\] columns - the first $s_2$ columns of $C$ are the final $s_2$ columns of $B$.

Now, we color $A$ with $\varphi$. Then we color $B$ with $h_{s_1}(\varphi)$. Since $s_1 \leq \left\lfloor \frac{k+2}{2} \right\rfloor$, then these two colorings agree on the vertices where they overlap. Finally, we color $C$ with $h_{s_2}(h_{s_1}(\varphi)$. Since $s_2 \leq \left\lfloor \frac{k+2}{2} \right\rfloor$, then the coloring of $C$ agrees with the coloring of $B$ on the vertices where both colors are applied. This process is illustrated in Figure~\ref{hss}.
\end{proof}

\begin{figure}
\begin{subfigure}[b]{0.48\linewidth}
          \centering

          \resizebox{\linewidth}{!}{

\tikzset{state/.style={circle,fill=blue!20,draw,minimum size=1.5em,inner sep=0pt},
            }
\begin{tikzpicture}

\filldraw[fill=blue!70!white, draw=black] (-0.4,0.4) rectangle (4.8,-3.7);

    \node[state] (1) at (0,0)  {1};
    \node[state] (2) [below=0.5cm of 1] {2};
    \node[state] (3) [below =0.5cm of 2] {3};
    \node[state] (4) [below =0.5cm of 3] {4};
    \node[state] (5) [right =0.5cm of 1] {5};
    \node[state] (6) [below=0.5cm of 5 ] {6};
    \node[state] (7) [below=0.5cm of 6] {7};
    \node[state] (8) [below =0.5cm of 7] {8};
    \node[state] (9) [right =0.5cm of 5] {9};
    \node[state] (10) [below =0.5cm of 9] {10};
    \node[state] (11) [below =0.5cm of 10] {10};
    \node[state] (12) [below =0.5cm of 11] {9};
    \node[state] (13) [right =0.5cm of 9] {8};
    \node[state] (14) [below =0.5cm of 13] {7};
    \node[state] (15) [below =0.5cm of 14] {6};
    \node[state] (16) [below =0.5cm of 15] {5};
    \node[state] (17) [right =0.5cm of 13] {4};
    \node[state] (18) [below =0.5cm of 17] {3};
    \node[state] (19) [below =0.5cm of 18] {2};
    \node[state] (20) [below =0.5cm of 19] {1};
    
    \node[state] (21) [right=0.5cm of 17]  {};
    \node[state] (22) [below=0.5cm of 21] {};
    \node[state] (23) [below =0.5cm of 22] {};
    \node[state] (24) [below =0.5cm of 23] {};
    \node[state] (25) [right =0.5cm of 21] {};
    \node[state] (26) [below=0.5cm of 25 ] {};
    \node[state] (27) [below=0.5cm of 26] {};
    \node[state] (28) [below =0.5cm of 27] {};
    \node[state] (29) [right =0.5cm of 25] {};
    \node[state] (30) [below =0.5cm of 29] {};
    \node[state] (31) [below =0.5cm of 30] {};
    \node[state] (32) [below =0.5cm of 31] {};
    \node[state] (33) [right =0.5cm of 29] {};
    \node[state] (34) [below =0.5cm of 33] {};
    \node[state] (35) [below =0.5cm of 34] {};
    \node[state] (36) [below =0.5cm of 35] {};
    \node[state] (37) [right =0.5cm of 33] {};
    \node[state] (38) [below =0.5cm of 37] {};
    \node[state] (39) [below =0.5cm of 38] {};
    \node[state] (40) [below =0.5cm of 39] {};
    
    \node[state] (41) [right=0.5cm of 37]  {};
    \node[state] (42) [below=0.5cm of 41] {};
    \node[state] (43) [below =0.5cm of 42] {};
    \node[state] (44) [below =0.5cm of 43] {};

    \path[draw,thick]
    (1) edge node {} (2)
    (2) edge node {} (6)
    (2) edge node {} (3)
    (3) edge node {} (4)
    (3) edge node {} (7)
    (4) edge node {} (8)
    (1) edge node {} (5)
    (5) edge node {} (6)
    (5) edge node {} (9)
    (6) edge node {} (10)
    (6) edge node {} (7)
    (7) edge node {} (11)
    (7) edge node {} (8)
    (8) edge node {} (12)
    (10) edge node {} (11)
    (9) edge node {} (10)
    (11) edge node {} (12)
    (12) edge node {} (16)
    (13) edge node {} (9)
    (13) edge node {} (14)
    (14) edge node {} (15)
    (15) edge node {} (16)
    (10) edge node {} (14)
    (11) edge node {} (15)
    (12) edge node {} (16)
    (17) edge node {} (13)
    (18) edge node {} (14)
    (19) edge node {} (15)
    (20) edge node {} (16)
    (17) edge node {} (18)
    (18) edge node {} (19)
    (19) edge node {} (20)
    
        (20) edge node {} (24)
            (19) edge node {} (23)
                (18) edge node {} (22)
                    (17) edge node {} (21)
    
  (21) edge node {} (22)
    (22) edge node {} (26)
    (22) edge node {} (23)
    (23) edge node {} (24)
    (23) edge node {} (27)
    (24) edge node {} (28)
    (21) edge node {} (25)
    (25) edge node {} (26)
    (25) edge node {} (29)
    (26) edge node {} (30)
    (26) edge node {} (27)
    (27) edge node {} (31)
    (27) edge node {} (28)
    (28) edge node {} (32)
    (30) edge node {} (31)
    (29) edge node {} (30)
    (31) edge node {} (32)
    (32) edge node {} (36)
    (33) edge node {} (29)
    (33) edge node {} (34)
    (34) edge node {} (35)
    (35) edge node {} (36)
    (30) edge node {} (34)
    (31) edge node {} (35)
    (32) edge node {} (36)
    (37) edge node {} (33)
    (38) edge node {} (34)
    (39) edge node {} (35)
    (40) edge node {} (36)
    (37) edge node {} (38)
    (38) edge node {} (39)
    (39) edge node {} (40)
    
            (40) edge node {} (44)
            (39) edge node {} (43)
                (38) edge node {} (42)
                    (37) edge node {} (41)
                    
                            (41) edge node {} (42)
            (42) edge node {} (43)
                (43) edge node {} (44);
\end{tikzpicture}
}
\caption{}
\end{subfigure}
\begin{subfigure}[b]{0.48\linewidth}
          \centering

          \resizebox{\linewidth}{!}{

\tikzset{state/.style={circle,fill=blue!20,draw,minimum size=1.5em,inner sep=0pt},
            }
\begin{tikzpicture}

\filldraw[fill=blue!70!white, draw=black] (-0.4,0.4) rectangle (4.8,-3.7);
\filldraw[fill=red!50!white, draw=black, opacity=0.6] (2.8,0.4) rectangle (8,-3.7);

    \node[state] (1) at (0,0)  {1};
    \node[state] (2) [below=0.5cm of 1] {2};
    \node[state] (3) [below =0.5cm of 2] {3};
    \node[state] (4) [below =0.5cm of 3] {4};
    \node[state] (5) [right =0.5cm of 1] {5};
    \node[state] (6) [below=0.5cm of 5 ] {6};
    \node[state] (7) [below=0.5cm of 6] {7};
    \node[state] (8) [below =0.5cm of 7] {8};
    \node[state] (9) [right =0.5cm of 5] {9};
    \node[state] (10) [below =0.5cm of 9] {10};
    \node[state] (11) [below =0.5cm of 10] {10};
    \node[state] (12) [below =0.5cm of 11] {9};
    \node[state] (13) [right =0.5cm of 9] {8};
    \node[state] (14) [below =0.5cm of 13] {7};
    \node[state] (15) [below =0.5cm of 14] {6};
    \node[state] (16) [below =0.5cm of 15] {5};
    \node[state] (17) [right =0.5cm of 13] {4};
    \node[state] (18) [below =0.5cm of 17] {3};
    \node[state] (19) [below =0.5cm of 18] {2};
    \node[state] (20) [below =0.5cm of 19] {1};
    
    \node[state] (21) [right=0.5cm of 17]  {9};
    \node[state] (22) [below=0.5cm of 21] {10};
    \node[state] (23) [below =0.5cm of 22] {10};
    \node[state] (24) [below =0.5cm of 23] {9};
    \node[state] (25) [right =0.5cm of 21] {1};
    \node[state] (26) [below=0.5cm of 25 ] {2};
    \node[state] (27) [below=0.5cm of 26] {3};
    \node[state] (28) [below =0.5cm of 27] {4};
    \node[state] (29) [right =0.5cm of 25] {5};
    \node[state] (30) [below =0.5cm of 29] {6};
    \node[state] (31) [below =0.5cm of 30] {7};
    \node[state] (32) [below =0.5cm of 31] {8};

    \node[state] (33) [right =0.5cm of 29] {};
    \node[state] (34) [below =0.5cm of 33] {};
    \node[state] (35) [below =0.5cm of 34] {};
    \node[state] (36) [below =0.5cm of 35] {};
    \node[state] (37) [right =0.5cm of 33] {};
    \node[state] (38) [below =0.5cm of 37] {};
    \node[state] (39) [below =0.5cm of 38] {};
    \node[state] (40) [below =0.5cm of 39] {};
    
    \node[state] (41) [right=0.5cm of 37]  {};
    \node[state] (42) [below=0.5cm of 41] {};
    \node[state] (43) [below =0.5cm of 42] {};
    \node[state] (44) [below =0.5cm of 43] {};

    \path[draw,thick]
    (1) edge node {} (2)
    (2) edge node {} (6)
    (2) edge node {} (3)
    (3) edge node {} (4)
    (3) edge node {} (7)
    (4) edge node {} (8)
    (1) edge node {} (5)
    (5) edge node {} (6)
    (5) edge node {} (9)
    (6) edge node {} (10)
    (6) edge node {} (7)
    (7) edge node {} (11)
    (7) edge node {} (8)
    (8) edge node {} (12)
    (10) edge node {} (11)
    (9) edge node {} (10)
    (11) edge node {} (12)
    (12) edge node {} (16)
    (13) edge node {} (9)
    (13) edge node {} (14)
    (14) edge node {} (15)
    (15) edge node {} (16)
    (10) edge node {} (14)
    (11) edge node {} (15)
    (12) edge node {} (16)
    (17) edge node {} (13)
    (18) edge node {} (14)
    (19) edge node {} (15)
    (20) edge node {} (16)
    (17) edge node {} (18)
    (18) edge node {} (19)
    (19) edge node {} (20)
    
        (20) edge node {} (24)
            (19) edge node {} (23)
                (18) edge node {} (22)
                    (17) edge node {} (21)
    
  (21) edge node {} (22)
    (22) edge node {} (26)
    (22) edge node {} (23)
    (23) edge node {} (24)
    (23) edge node {} (27)
    (24) edge node {} (28)
    (21) edge node {} (25)
    (25) edge node {} (26)
    (25) edge node {} (29)
    (26) edge node {} (30)
    (26) edge node {} (27)
    (27) edge node {} (31)
    (27) edge node {} (28)
    (28) edge node {} (32)
    (30) edge node {} (31)
    (29) edge node {} (30)
    (31) edge node {} (32)
    (32) edge node {} (36)
    (33) edge node {} (29)
    (33) edge node {} (34)
    (34) edge node {} (35)
    (35) edge node {} (36)
    (30) edge node {} (34)
    (31) edge node {} (35)
    (32) edge node {} (36)
    (37) edge node {} (33)
    (38) edge node {} (34)
    (39) edge node {} (35)
    (40) edge node {} (36)
    (37) edge node {} (38)
    (38) edge node {} (39)
    (39) edge node {} (40)
    
            (40) edge node {} (44)
            (39) edge node {} (43)
                (38) edge node {} (42)
                    (37) edge node {} (41)
                    
                            (41) edge node {} (42)
            (42) edge node {} (43)
                (43) edge node {} (44);
\end{tikzpicture}
}
\caption{}
\end{subfigure}
\begin{subfigure}[b]{0.48\linewidth}
          \centering

          \resizebox{\linewidth}{!}{

\tikzset{state/.style={circle,fill=blue!20,draw,minimum size=1.5em,inner sep=0pt},
            }
\begin{tikzpicture}

\filldraw[fill=blue!70!white, draw=black] (-0.4,0.4) rectangle (4.8,-3.7);
\filldraw[fill=red!50!white, draw=black, opacity=0.6] (2.8,0.4) rectangle (8,-3.7);
\filldraw[fill=green!50!white, draw=black, opacity=0.6] (6,0.4) rectangle (11.4,-3.7);

    \node[state] (1) at (0,0)  {1};
    \node[state] (2) [below=0.5cm of 1] {2};
    \node[state] (3) [below =0.5cm of 2] {3};
    \node[state] (4) [below =0.5cm of 3] {4};
    \node[state] (5) [right =0.5cm of 1] {5};
    \node[state] (6) [below=0.5cm of 5 ] {6};
    \node[state] (7) [below=0.5cm of 6] {7};
    \node[state] (8) [below =0.5cm of 7] {8};
    \node[state] (9) [right =0.5cm of 5] {9};
    \node[state] (10) [below =0.5cm of 9] {10};
    \node[state] (11) [below =0.5cm of 10] {10};
    \node[state] (12) [below =0.5cm of 11] {9};
    \node[state] (13) [right =0.5cm of 9] {8};
    \node[state] (14) [below =0.5cm of 13] {7};
    \node[state] (15) [below =0.5cm of 14] {6};
    \node[state] (16) [below =0.5cm of 15] {5};
    \node[state] (17) [right =0.5cm of 13] {4};
    \node[state] (18) [below =0.5cm of 17] {3};
    \node[state] (19) [below =0.5cm of 18] {2};
    \node[state] (20) [below =0.5cm of 19] {1};
    
    \node[state] (21) [right=0.5cm of 17]  {9};
    \node[state] (22) [below=0.5cm of 21] {10};
    \node[state] (23) [below =0.5cm of 22] {10};
    \node[state] (24) [below =0.5cm of 23] {9};
    \node[state] (25) [right =0.5cm of 21] {1};
    \node[state] (26) [below=0.5cm of 25 ] {2};
    \node[state] (27) [below=0.5cm of 26] {3};
    \node[state] (28) [below =0.5cm of 27] {4};
    \node[state] (29) [right =0.5cm of 25] {5};
    \node[state] (30) [below =0.5cm of 29] {6};
    \node[state] (31) [below =0.5cm of 30] {7};
    \node[state] (32) [below =0.5cm of 31] {8};

    \node[state] (33) [right =0.5cm of 29] {9};
    \node[state] (34) [below =0.5cm of 33] {10};
    \node[state] (35) [below =0.5cm of 34] {10};
    \node[state] (36) [below =0.5cm of 35] {9};
    \node[state] (37) [right =0.5cm of 33] {1};
    \node[state] (38) [below =0.5cm of 37] {2};
    \node[state] (39) [below =0.5cm of 38] {3};
    \node[state] (40) [below =0.5cm of 39] {4};
    
    \node[state] (41) [right=0.5cm of 37]  {5};
    \node[state] (42) [below=0.5cm of 41] {6};
    \node[state] (43) [below =0.5cm of 42] {7};
    \node[state] (44) [below =0.5cm of 43] {8};

    \path[draw,thick]
    (1) edge node {} (2)
    (2) edge node {} (6)
    (2) edge node {} (3)
    (3) edge node {} (4)
    (3) edge node {} (7)
    (4) edge node {} (8)
    (1) edge node {} (5)
    (5) edge node {} (6)
    (5) edge node {} (9)
    (6) edge node {} (10)
    (6) edge node {} (7)
    (7) edge node {} (11)
    (7) edge node {} (8)
    (8) edge node {} (12)
    (10) edge node {} (11)
    (9) edge node {} (10)
    (11) edge node {} (12)
    (12) edge node {} (16)
    (13) edge node {} (9)
    (13) edge node {} (14)
    (14) edge node {} (15)
    (15) edge node {} (16)
    (10) edge node {} (14)
    (11) edge node {} (15)
    (12) edge node {} (16)
    (17) edge node {} (13)
    (18) edge node {} (14)
    (19) edge node {} (15)
    (20) edge node {} (16)
    (17) edge node {} (18)
    (18) edge node {} (19)
    (19) edge node {} (20)
    
        (20) edge node {} (24)
            (19) edge node {} (23)
                (18) edge node {} (22)
                    (17) edge node {} (21)
    
  (21) edge node {} (22)
    (22) edge node {} (26)
    (22) edge node {} (23)
    (23) edge node {} (24)
    (23) edge node {} (27)
    (24) edge node {} (28)
    (21) edge node {} (25)
    (25) edge node {} (26)
    (25) edge node {} (29)
    (26) edge node {} (30)
    (26) edge node {} (27)
    (27) edge node {} (31)
    (27) edge node {} (28)
    (28) edge node {} (32)
    (30) edge node {} (31)
    (29) edge node {} (30)
    (31) edge node {} (32)
    (32) edge node {} (36)
    (33) edge node {} (29)
    (33) edge node {} (34)
    (34) edge node {} (35)
    (35) edge node {} (36)
    (30) edge node {} (34)
    (31) edge node {} (35)
    (32) edge node {} (36)
    (37) edge node {} (33)
    (38) edge node {} (34)
    (39) edge node {} (35)
    (40) edge node {} (36)
    (37) edge node {} (38)
    (38) edge node {} (39)
    (39) edge node {} (40)
    
            (40) edge node {} (44)
            (39) edge node {} (43)
                (38) edge node {} (42)
                    (37) edge node {} (41)
                    
                            (41) edge node {} (42)
            (42) edge node {} (43)
                (43) edge node {} (44);
\end{tikzpicture}
}
\caption{}
\end{subfigure}

\caption{}
\label{hss}
\end{figure}

\begin{lemma}
\label{r1}
For any positive integer $k$,
\[d_k\left(G_{k+1+I,k+2}\right) \geq \frac{(k+1)(k+2)}{2},\] for any $\left\lceil \frac{k+1}{2} \right\rceil \leq I < k+1$.
\end{lemma}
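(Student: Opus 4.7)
The plan is to mirror the argument of Lemma~\ref{c1}, but to extend vertically (adding rows) rather than horizontally (adding columns), using the operator $v_t$ in place of $h_s$. I view $G_{k+1+I, k+2}$ as two overlapping copies of $G_{k+1, k+2}$: call them $A$, consisting of rows $1$ through $k+1$, and $B$, consisting of rows $I+1$ through $I+k+1 = k+1+I$. These copies overlap in exactly $t := k+1 - I$ rows, namely rows $I+1$ through $k+1$, comprising $t(k+2)$ vertices. The hypothesis $\left\lceil \frac{k+1}{2} \right\rceil \leq I < k+1$ translates to $1 \leq t \leq \left\lfloor \frac{k+1}{2} \right\rfloor$, which is precisely the valid range for $v_t$.

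I color copy $A$ with the standard block coloring $\varphi$. This induces a partial coloring on the $t$ overlap rows at the top of $B$. The main claim to verify is that this partial coloring agrees exactly with the restriction of $v_t(\varphi)$ to the first $t$ rows of $B$. For any position $x$ in the linear ordering on $V(B)$ satisfying $1 \leq r_x \leq t$ and $1 \leq x \leq (k+1)\left\lfloor \frac{k+2}{2}\right\rfloor$, the defining formula $v_t(\varphi)(x) = \varphi(x + k+1-t)$ shifts the row index by $k+1-t = I$, so the color assigned by $v_t(\varphi)$ at row $r_x$ of column $i$ of $B$ is $\varphi$ evaluated at row $r_x + I$ of column $i$ of $A$. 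Since row $r_x$ of $B$ is globally row $I + r_x$, which is row $I + r_x$ of $A$, the color from $v_t(\varphi)$ matches the color inherited from $A$ at every overlap position in the first $\left\lfloor \frac{k+2}{2}\right\rfloor$ columns.

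The main obstacle I anticipate is handling the middle column when $k$ is odd, where $v_t$ uses the reflection-type formula $v_t(\varphi)(x) = \varphi((k+1)^2 + t + 1 - x)$ rather than a plain shift. Here the positions targeted (rows $1$ through $t$ of column $\left\lceil \frac{k+2}{2}\right\rceil$ of $B$) sit just past $(k+1)\left\lfloor \frac{k+2}{2}\right\rfloor$ in the linear order, while the corresponding overlap rows in $A$ in that column lie in the ``reflected'' second half of $\varphi$ (positions past $N$, governed by the rule $\varphi(x) = \varphi(2N+1-x)$). Matching the two requires a short computation that combines the $v_t$ reflection formula with this pairing relation, which I would carry out case-by-case; this is where the algebraic bookkeeping is tightest but essentially parallels the permutation calculations already developed in Section~\ref{extsection}.

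Once overlap agreement is established, I extend the coloring of $B$ to the full $v_t(\varphi)$. Because the geometric argument in Lemma~\ref{blockproof} depends only on the pairing $x \leftrightarrow 2N+1-x$ of positions, not on the specific color labels, $v_t(\varphi)$ is itself a proper $k$-distance domatic coloring of $G_{k+1, k+2}$. Since every vertex of $G_{k+1+I, k+2}$ belongs to $A$ or $B$ (or both), each vertex sits inside a copy of $G_{k+1,k+2}$ with a proper $k$-distance domatic coloring using all $\frac{(k+1)(k+2)}{2}$ colors, hence is within distance $k$ of every color, yielding the claimed lower bound $d_k(G_{k+1+I, k+2}) \geq \frac{(k+1)(k+2)}{2}$.
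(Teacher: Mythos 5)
Your proof is correct and follows essentially the same route as the paper: both view $G_{k+1+I,k+2}$ as two copies of $G_{k+1,k+2}$ overlapping in $t=k+1-I$ rows, colored by $\varphi$ and $v_t(\varphi)$ respectively, and conclude because every vertex lies in a copy carrying a coloring isomorphic to $\varphi$. The only difference is that the paper justifies the overlap agreement indirectly (a color in row $j$ of $\varphi$ recurs only in row $k+2-j$, so the inherited partial coloring repeats no color and hence extends to a coloring isomorphic to $\varphi$, of which $v_t(\varphi)$ is one), whereas you verify $v_t(\varphi)$ directly; the middle-column computation you defer does go through, since for $x=\frac{(k+1)^2}{2}+j$ with $1\leq j\leq t$ the reflection formula gives $\frac{(k+1)^2}{2}+t+1-j$, which equals $2N+1-p$ for the corresponding position $p=\frac{(k+1)^2}{2}+k+1-t+j>N$ in the top copy.
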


\begin{proof}
Consider this graph as two copies of $G_{k+1,k+2}$ which overlap in the $t=k+1-I$ middle rows. Color the first copy of $G_{k+1,k+2}$ (the first $k+1$ rows of our graph) with the standard block coloring $\varphi$. Next, observe that the second copy of $G_{k+1,k+2}$ (the final $k+1$ rows) is already partially colored.

Under the standard block coloring, a color that appears in row $i$ only appears again in row $k+2-i$. Since $I \geq \left\lceil \frac{k+1}{2} \right\rceil$, then $t \leq \left\lfloor \frac{k+1}{2} \right\rfloor$. Therefore, if a color appears in some row $i$ that belongs to the $t$ rows of overlap between our two copies of $G_{k+1,k+2}$, then \[k+2-t \leq i \leq k+1.\]  So it follows that \[t \geq k+2-i \geq 1.\] Thus, the color only appears again in a row above the intersection. Hence, the partial coloring given to the second $G_{k+1,k+2}$ does not repeat any colors.

Similarly, since $I \geq \left\lceil \frac{k+1}{2} \right\rceil$ rows remain to be colored, then it follows that this partial coloring can be extended to a coloring of the entire second copy of $G_{k+1,k+2}$ that is isomorphic to the standard block coloring. Specifically, it can be extended to the coloring $v_t(\varphi)$, though there could be several possibilities depending on $t$. This process is illustrated in Figure~\ref{vt}.

In this way, each vertex is now contained within at least one copy of $G_{k+1,k+2}$ under a coloring isomorphic to the standard block coloring. Therefore, each vertex is within distance $k$ from every color.
\end{proof}

\begin{figure}
\begin{subfigure}[b]{0.3\linewidth}
          \centering

          \resizebox{\linewidth}{!}{

\tikzset{state/.style={circle,fill=blue!20,draw,minimum size=1.5em,inner sep=0pt},
            }
\begin{tikzpicture}

\filldraw[fill=blue!70!white, draw=black] (-0.4,0.4) rectangle (4.8,-3.7);

    \node[state] (1) at (0,0)  {1};
    \node[state] (2) [below=0.5cm of 1] {2};
    \node[state] (3) [below =0.5cm of 2] {3};
    \node[state] (4) [below =0.5cm of 3] {4};
    \node[state] (5) [right =0.5cm of 1] {5};
    \node[state] (6) [below=0.5cm of 5 ] {6};
    \node[state] (7) [below=0.5cm of 6] {7};
    \node[state] (8) [below =0.5cm of 7] {8};
    \node[state] (9) [right =0.5cm of 5] {9};
    \node[state] (10) [below =0.5cm of 9] {10};
    \node[state] (11) [below =0.5cm of 10] {10};
    \node[state] (12) [below =0.5cm of 11] {9};
    \node[state] (13) [right =0.5cm of 9] {8};
    \node[state] (14) [below =0.5cm of 13] {7};
    \node[state] (15) [below =0.5cm of 14] {6};
    \node[state] (16) [below =0.5cm of 15] {5};
    \node[state] (17) [right =0.5cm of 13] {4};
    \node[state] (18) [below =0.5cm of 17] {3};
    \node[state] (19) [below =0.5cm of 18] {2};
    \node[state] (20) [below =0.5cm of 19] {1};
    
    \node[state] (21) [below=0.5cm of 4] {};
    \node[state] (22) [below=0.5cm of 21] {};
    \node[state] (23) [below =0.5cm of 22] {};
    \node[state] (25) [right =0.5cm of 21] {};
    \node[state] (26) [below=0.5cm of 25 ] {};
    \node[state] (27) [below=0.5cm of 26] {};
    \node[state] (29) [right =0.5cm of 25] {};
    \node[state] (30) [below =0.5cm of 29] {};
    \node[state] (31) [below =0.5cm of 30] {};
    \node[state] (33) [right =0.5cm of 29] {};
    \node[state] (34) [below =0.5cm of 33] {};
    \node[state] (35) [below =0.5cm of 34] {};
    \node[state] (37) [right =0.5cm of 33] {};
    \node[state] (38) [below =0.5cm of 37] {};
    \node[state] (39) [below =0.5cm of 38] {};
    
    \path[draw,thick]
    (1) edge node {} (2)
    (2) edge node {} (6)
    (2) edge node {} (3)
    (3) edge node {} (4)
    (3) edge node {} (7)
    (4) edge node {} (8)
    (1) edge node {} (5)
    (5) edge node {} (6)
    (5) edge node {} (9)
    (6) edge node {} (10)
    (6) edge node {} (7)
    (7) edge node {} (11)
    (7) edge node {} (8)
    (8) edge node {} (12)
    (10) edge node {} (11)
    (9) edge node {} (10)
    (11) edge node {} (12)
    (12) edge node {} (16)
    (13) edge node {} (9)
    (13) edge node {} (14)
    (14) edge node {} (15)
    (15) edge node {} (16)
    (10) edge node {} (14)
    (11) edge node {} (15)
    (12) edge node {} (16)
    (17) edge node {} (13)
    (18) edge node {} (14)
    (19) edge node {} (15)
    (20) edge node {} (16)
    (17) edge node {} (18)
    (18) edge node {} (19)
    (19) edge node {} (20)
    
    (4) edge node {} (21)
    (8) edge node {} (25)
    (12) edge node {} (29)
    (16) edge node {} (33)
    (20) edge node {} (37)
    
    (21) edge node {} (22)
    (22) edge node {} (26)
    (22) edge node {} (23)
    (23) edge node {} (27)
    (21) edge node {} (25)
    (25) edge node {} (26)
    (25) edge node {} (29)
    (26) edge node {} (30)
    (26) edge node {} (27)
    (27) edge node {} (31)
    (30) edge node {} (31)
    (29) edge node {} (30)
    (33) edge node {} (29)
    (33) edge node {} (34)
    (34) edge node {} (35)
    (30) edge node {} (34)
    (31) edge node {} (35)
    (37) edge node {} (33)
    (38) edge node {} (34)
    (39) edge node {} (35)
    (37) edge node {} (38)
    (38) edge node {} (39);

\end{tikzpicture}
}
\caption{}
\end{subfigure}
\begin{subfigure}[b]{0.3\linewidth}
          \centering

          \resizebox{\linewidth}{!}{

\tikzset{state/.style={circle,fill=blue!20,draw,minimum size=1.5em,inner sep=0pt},
            }
\begin{tikzpicture}

\filldraw[fill=blue!70!white, draw=black] (-0.4,0.4) rectangle (4.8,-3.7);
\filldraw[fill=red!50!white, draw=black, opacity=0.6] (-0.4,-2.8) rectangle (4.8,-6.875);

    \node[state] (1) at (0,0)  {1};
    \node[state] (2) [below=0.5cm of 1] {2};
    \node[state] (3) [below =0.5cm of 2] {3};
    \node[state] (4) [below =0.5cm of 3] {4};
    \node[state] (5) [right =0.5cm of 1] {5};
    \node[state] (6) [below=0.5cm of 5 ] {6};
    \node[state] (7) [below=0.5cm of 6] {7};
    \node[state] (8) [below =0.5cm of 7] {8};
    \node[state] (9) [right =0.5cm of 5] {9};
    \node[state] (10) [below =0.5cm of 9] {10};
    \node[state] (11) [below =0.5cm of 10] {10};
    \node[state] (12) [below =0.5cm of 11] {9};
    \node[state] (13) [right =0.5cm of 9] {8};
    \node[state] (14) [below =0.5cm of 13] {7};
    \node[state] (15) [below =0.5cm of 14] {6};
    \node[state] (16) [below =0.5cm of 15] {5};
    \node[state] (17) [right =0.5cm of 13] {4};
    \node[state] (18) [below =0.5cm of 17] {3};
    \node[state] (19) [below =0.5cm of 18] {2};
    \node[state] (20) [below =0.5cm of 19] {1};
    
    \node[state] (21) [below=0.5cm of 4] {};
    \node[state] (22) [below=0.5cm of 21] {};
    \node[state] (23) [below =0.5cm of 22] {1};
    \node[state] (25) [right =0.5cm of 21] {};
    \node[state] (26) [below=0.5cm of 25 ] {};
    \node[state] (27) [below=0.5cm of 26] {5};
    \node[state] (29) [right =0.5cm of 25] {};
    \node[state] (30) [below =0.5cm of 29] {};
    \node[state] (31) [below =0.5cm of 30] {9};
    \node[state] (33) [right =0.5cm of 29] {};
    \node[state] (34) [below =0.5cm of 33] {};
    \node[state] (35) [below =0.5cm of 34] {8};
    \node[state] (37) [right =0.5cm of 33] {};
    \node[state] (38) [below =0.5cm of 37] {};
    \node[state] (39) [below =0.5cm of 38] {4};
    
    \path[draw,thick]
    (1) edge node {} (2)
    (2) edge node {} (6)
    (2) edge node {} (3)
    (3) edge node {} (4)
    (3) edge node {} (7)
    (4) edge node {} (8)
    (1) edge node {} (5)
    (5) edge node {} (6)
    (5) edge node {} (9)
    (6) edge node {} (10)
    (6) edge node {} (7)
    (7) edge node {} (11)
    (7) edge node {} (8)
    (8) edge node {} (12)
    (10) edge node {} (11)
    (9) edge node {} (10)
    (11) edge node {} (12)
    (12) edge node {} (16)
    (13) edge node {} (9)
    (13) edge node {} (14)
    (14) edge node {} (15)
    (15) edge node {} (16)
    (10) edge node {} (14)
    (11) edge node {} (15)
    (12) edge node {} (16)
    (17) edge node {} (13)
    (18) edge node {} (14)
    (19) edge node {} (15)
    (20) edge node {} (16)
    (17) edge node {} (18)
    (18) edge node {} (19)
    (19) edge node {} (20)
    
    (4) edge node {} (21)
    (8) edge node {} (25)
    (12) edge node {} (29)
    (16) edge node {} (33)
    (20) edge node {} (37)
    
    (21) edge node {} (22)
    (22) edge node {} (26)
    (22) edge node {} (23)
    (23) edge node {} (27)
    (21) edge node {} (25)
    (25) edge node {} (26)
    (25) edge node {} (29)
    (26) edge node {} (30)
    (26) edge node {} (27)
    (27) edge node {} (31)
    (30) edge node {} (31)
    (29) edge node {} (30)
    (33) edge node {} (29)
    (33) edge node {} (34)
    (34) edge node {} (35)
    (30) edge node {} (34)
    (31) edge node {} (35)
    (37) edge node {} (33)
    (38) edge node {} (34)
    (39) edge node {} (35)
    (37) edge node {} (38)
    (38) edge node {} (39);

\end{tikzpicture}
}
\caption{}
\end{subfigure}
\begin{subfigure}[b]{0.3\linewidth}
          \centering

          \resizebox{\linewidth}{!}{

\tikzset{state/.style={circle,fill=blue!20,draw,minimum size=1.5em,inner sep=0pt},
            }
\begin{tikzpicture}

\filldraw[fill=blue!70!white, draw=black] (-0.4,0.4) rectangle (4.8,-3.7);
\filldraw[fill=red!50!white, draw=black, opacity=0.6] (-0.4,-2.8) rectangle (4.8,-6.875);

    \node[state] (1) at (0,0)  {1};
    \node[state] (2) [below=0.5cm of 1] {2};
    \node[state] (3) [below =0.5cm of 2] {3};
    \node[state] (4) [below =0.5cm of 3] {4};
    \node[state] (5) [right =0.5cm of 1] {5};
    \node[state] (6) [below=0.5cm of 5 ] {6};
    \node[state] (7) [below=0.5cm of 6] {7};
    \node[state] (8) [below =0.5cm of 7] {8};
    \node[state] (9) [right =0.5cm of 5] {9};
    \node[state] (10) [below =0.5cm of 9] {10};
    \node[state] (11) [below =0.5cm of 10] {10};
    \node[state] (12) [below =0.5cm of 11] {9};
    \node[state] (13) [right =0.5cm of 9] {8};
    \node[state] (14) [below =0.5cm of 13] {7};
    \node[state] (15) [below =0.5cm of 14] {6};
    \node[state] (16) [below =0.5cm of 15] {5};
    \node[state] (17) [right =0.5cm of 13] {4};
    \node[state] (18) [below =0.5cm of 17] {3};
    \node[state] (19) [below =0.5cm of 18] {2};
    \node[state] (20) [below =0.5cm of 19] {1};
    
    \node[state] (21) [below=0.5cm of 4] {2};
    \node[state] (22) [below=0.5cm of 21] {3};
    \node[state] (23) [below =0.5cm of 22] {1};
    \node[state] (25) [right =0.5cm of 21] {6};
    \node[state] (26) [below=0.5cm of 25 ] {7};
    \node[state] (27) [below=0.5cm of 26] {5};
    \node[state] (29) [right =0.5cm of 25] {10};
    \node[state] (30) [below =0.5cm of 29] {10};
    \node[state] (31) [below =0.5cm of 30] {9};
    \node[state] (33) [right =0.5cm of 29] {7};
    \node[state] (34) [below =0.5cm of 33] {6};
    \node[state] (35) [below =0.5cm of 34] {8};
    \node[state] (37) [right =0.5cm of 33] {3};
    \node[state] (38) [below =0.5cm of 37] {2};
    \node[state] (39) [below =0.5cm of 38] {4};
    
    \path[draw,thick]
    (1) edge node {} (2)
    (2) edge node {} (6)
    (2) edge node {} (3)
    (3) edge node {} (4)
    (3) edge node {} (7)
    (4) edge node {} (8)
    (1) edge node {} (5)
    (5) edge node {} (6)
    (5) edge node {} (9)
    (6) edge node {} (10)
    (6) edge node {} (7)
    (7) edge node {} (11)
    (7) edge node {} (8)
    (8) edge node {} (12)
    (10) edge node {} (11)
    (9) edge node {} (10)
    (11) edge node {} (12)
    (12) edge node {} (16)
    (13) edge node {} (9)
    (13) edge node {} (14)
    (14) edge node {} (15)
    (15) edge node {} (16)
    (10) edge node {} (14)
    (11) edge node {} (15)
    (12) edge node {} (16)
    (17) edge node {} (13)
    (18) edge node {} (14)
    (19) edge node {} (15)
    (20) edge node {} (16)
    (17) edge node {} (18)
    (18) edge node {} (19)
    (19) edge node {} (20)
    
    (4) edge node {} (21)
    (8) edge node {} (25)
    (12) edge node {} (29)
    (16) edge node {} (33)
    (20) edge node {} (37)
    
    (21) edge node {} (22)
    (22) edge node {} (26)
    (22) edge node {} (23)
    (23) edge node {} (27)
    (21) edge node {} (25)
    (25) edge node {} (26)
    (25) edge node {} (29)
    (26) edge node {} (30)
    (26) edge node {} (27)
    (27) edge node {} (31)
    (30) edge node {} (31)
    (29) edge node {} (30)
    (33) edge node {} (29)
    (33) edge node {} (34)
    (34) edge node {} (35)
    (30) edge node {} (34)
    (31) edge node {} (35)
    (37) edge node {} (33)
    (38) edge node {} (34)
    (39) edge node {} (35)
    (37) edge node {} (38)
    (38) edge node {} (39);

\end{tikzpicture}
}
\caption{}
\end{subfigure}
\caption{}
\label{vt}
\end{figure}

\begin{lemma}
\label{r2}
For any positive integer $k$,
\[d_k\left(G_{2(k+1)+I,k+2}\right) \geq \frac{(k+1)(k+2)}{2},\] for any $1\leq I<\left\lceil \frac{k+1}{2} \right\rceil$.
\end{lemma}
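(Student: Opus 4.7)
The plan is to mirror the construction of Lemma~\ref{c2}, but vertically instead of horizontally, using three overlapping copies of $G_{k+1,k+2}$ in place of the two copies used in Lemma~\ref{r1}. First I will decompose $G_{2(k+1)+I,k+2}$ as the union of three copies of $G_{k+1,k+2}$, called $A$, $B$, and $C$: $A$ consists of the first $k+1$ rows, $B$ consists of the $k+1$ rows starting at row $\lceil (k+1)/2\rceil + 1$, and $C$ consists of the last $k+1$ rows. With these choices, $A$ and $B$ overlap in the $t_1 = \lfloor (k+1)/2\rfloor$ middle rows, while $B$ and $C$ overlap in the $t_2 = \lceil (k+1)/2\rceil - I$ middle rows. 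A short arithmetic check confirms $t_1 + t_2 = k+1-I$, the total row overlap needed for three copies of height $k+1$ to fit into $2(k+1)+I$ rows.

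Next I will color $A$ with the standard block coloring $\varphi$, color $B$ with $v_{t_1}(\varphi)$, and color $C$ with $v_{t_2}(v_{t_1}(\varphi))$. The hypothesis $1 \leq I < \lceil (k+1)/2\rceil$ guarantees $1 \leq t_2 \leq \lfloor (k+1)/2\rfloor$, and $t_1 = \lfloor (k+1)/2\rfloor$ is in range automatically, so the required $v$-operations are well-defined. To show the result is a consistent coloring of $G_{2(k+1)+I,k+2}$, I must check that the colorings agree on each of the two overlapping bands of rows. The $A$--$B$ agreement will follow by exactly the argument of Lemma~\ref{r1}: $v_{t_1}(\varphi)$ reassigns the top $t_1$ rows of its copy to the colors that $\varphi$ placed on the bottom $t_1$ rows, so these match $A$'s bottom $t_1$ rows.

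The $B$--$C$ agreement is more delicate because $B$ is already colored by $v_{t_1}(\varphi)$ rather than by $\varphi$, and I expect this to be the main obstacle. The verification amounts to unwinding $v_{t_2}$ applied to $v_{t_1}(\varphi)$: on the top $t_2$ rows of $C$, the value of $v_{t_2}(v_{t_1}(\varphi))((j,i))$ collapses to $v_{t_1}(\varphi)((j+k+1-t_2,i))$, whose source row $j+k+1-t_2$ lies in the bottom $t_2$ rows of $B$. Because $t_2 \leq t_1$, those rows lie within the bottom $t_1$ rows on which $v_{t_1}$ rewrote the coloring, and a direct substitution confirms that the colors assigned to $C$'s top $t_2$ rows coincide with the colors $v_{t_1}(\varphi)$ assigns to $B$'s bottom $t_2$ rows. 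When $k$ is odd the second-half-column clauses in the definition of $v_t$ must also be checked, but they act as the identity on the middle rows of each copy, so no conflict arises there.

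Finally, every vertex of $G_{2(k+1)+I,k+2}$ lies in at least one of $A$, $B$, $C$, each equipped with a coloring that is a permutation of the standard block coloring. Applying Lemma~\ref{blockproof} to the copy containing a given vertex shows that every color in $[N]$ appears within distance $k$ of that vertex, so the assembled coloring is a proper $k$-distance domatic coloring using $N = \frac{(k+1)(k+2)}{2}$ colors, which gives the required bound $d_k(G_{2(k+1)+I,k+2}) \geq \frac{(k+1)(k+2)}{2}$.
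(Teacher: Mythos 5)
Your proof is correct and follows essentially the same route as the paper: the same three-copy decomposition with overlaps $t_1=\left\lfloor\frac{k+1}{2}\right\rfloor$ and $t_2=\left\lceil\frac{k+1}{2}\right\rceil-I$, coloring $A$, $B$, $C$ with $\varphi$, $v_{t_1}(\varphi)$, and $v_{t_2}(v_{t_1}(\varphi))$ respectively. In fact you supply more detail on the $B$--$C$ agreement than the paper does, and your $v_{t_2}(v_{t_1}(\varphi))$ is the clearly intended coloring where the paper misprints $h_{t_2}(h_{t_1}(\varphi))$.
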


\begin{proof}
Consider this graph as three copies of $G_{k+1,k+2}$. The first copy, call it graph $A$, is comprised of the first $k+1$ rows. The second copy, call it graph $B$, is comprised of the $k+1$ rows which begin at row $\left\lceil \frac{k+1}{2} \right\rceil +1$ and end with row $\left\lceil \frac{k+1}{2} \right\rceil +k+1$. $B$ overlaps with $A$ in \[t_1 = \left\lfloor \frac{k+1}{2} \right\rfloor\] rows - the first $t_1$ rows of $B$ are the same as the final $t_1$ rows of $A$. The third copy, call it graph $C$, is comprised of the final $k+1$ rows which means that it overlaps $B$ in \[t_2 = \left\lceil \frac{k+1}{2} \right\rceil - I\] rows - the first $t_2$ rows of $C$ are the final $s_2$ rows of $B$.

Now, we color $A$ with $\varphi$. Then we color $B$ with $v_{t_1}(\varphi)$. Since $t_1 \leq \left\lfloor \frac{k+1}{2} \right\rfloor$, then these two colorings agree on the vertices where they overlap. Finally, we color $C$ with $h_{t_2}(h_{t_1}(\varphi))$. Since $t_2 \leq \left\lfloor \frac{k+1}{2} \right\rfloor$, then the coloring of $C$ agrees with the coloring of $B$ on the vertices where both colors are applied. This process is illustrated in Figure~\ref{vtt}.
\end{proof}

\begin{figure}
\begin{subfigure}[b]{0.3\linewidth}
          \centering

          \resizebox{\linewidth}{!}{

\tikzset{state/.style={circle,fill=blue!20,draw,minimum size=1.5em,inner sep=0pt},
            }
\begin{tikzpicture}

\filldraw[fill=blue!70!white, draw=black] (-0.4,0.4) rectangle (4.8,-3.7);

    \node[state] (1) at (0,0)  {1};
    \node[state] (2) [below=0.5cm of 1] {2};
    \node[state] (3) [below =0.5cm of 2] {3};
    \node[state] (4) [below =0.5cm of 3] {4};
    \node[state] (5) [right =0.5cm of 1] {5};
    \node[state] (6) [below=0.5cm of 5 ] {6};
    \node[state] (7) [below=0.5cm of 6] {7};
    \node[state] (8) [below =0.5cm of 7] {8};
    \node[state] (9) [right =0.5cm of 5] {9};
    \node[state] (10) [below =0.5cm of 9] {10};
    \node[state] (11) [below =0.5cm of 10] {10};
    \node[state] (12) [below =0.5cm of 11] {9};
    \node[state] (13) [right =0.5cm of 9] {8};
    \node[state] (14) [below =0.5cm of 13] {7};
    \node[state] (15) [below =0.5cm of 14] {6};
    \node[state] (16) [below =0.5cm of 15] {5};
    \node[state] (17) [right =0.5cm of 13] {4};
    \node[state] (18) [below =0.5cm of 17] {3};
    \node[state] (19) [below =0.5cm of 18] {2};
    \node[state] (20) [below =0.5cm of 19] {1};
    
    \node[state] (21) [below=0.5cm of 4] {};
    \node[state] (22) [below=0.5cm of 21] {};
    \node[state] (23) [below =0.5cm of 22] {};
    \node[state] (24) [below =0.5cm of 23] {};
    \node[state] (25) [right =0.5cm of 21] {};
    \node[state] (26) [below=0.5cm of 25 ] {};
    \node[state] (27) [below=0.5cm of 26] {};
    \node[state] (28) [below =0.5cm of 27] {};
    \node[state] (29) [right =0.5cm of 25] {};
    \node[state] (30) [below =0.5cm of 29] {};
    \node[state] (31) [below =0.5cm of 30] {};
    \node[state] (32) [below =0.5cm of 31] {};
    \node[state] (33) [right =0.5cm of 29] {};
    \node[state] (34) [below =0.5cm of 33] {};
    \node[state] (35) [below =0.5cm of 34] {};
    \node[state] (36) [below =0.5cm of 35] {};
    \node[state] (37) [right =0.5cm of 33] {};
    \node[state] (38) [below =0.5cm of 37] {};
    \node[state] (39) [below =0.5cm of 38] {};
    \node[state] (40) [below =0.5cm of 39] {};
    
    \node[state] (41) [below =0.5cm of 24] {};
    \node[state] (42) [below =0.5cm of 28] {};
    \node[state] (43) [below =0.5cm of 32] {};
    \node[state] (44) [below =0.5cm of 36] {};
    \node[state] (45) [below =0.5cm of 40] {};    
    
    \path[draw,thick]
    (1) edge node {} (2)
    (2) edge node {} (6)
    (2) edge node {} (3)
    (3) edge node {} (4)
    (3) edge node {} (7)
    (4) edge node {} (8)
    (1) edge node {} (5)
    (5) edge node {} (6)
    (5) edge node {} (9)
    (6) edge node {} (10)
    (6) edge node {} (7)
    (7) edge node {} (11)
    (7) edge node {} (8)
    (8) edge node {} (12)
    (10) edge node {} (11)
    (9) edge node {} (10)
    (11) edge node {} (12)
    (12) edge node {} (16)
    (13) edge node {} (9)
    (13) edge node {} (14)
    (14) edge node {} (15)
    (15) edge node {} (16)
    (10) edge node {} (14)
    (11) edge node {} (15)
    (12) edge node {} (16)
    (17) edge node {} (13)
    (18) edge node {} (14)
    (19) edge node {} (15)
    (20) edge node {} (16)
    (17) edge node {} (18)
    (18) edge node {} (19)
    (19) edge node {} (20)
    
    (4) edge node {} (21)
    (8) edge node {} (25)
    (12) edge node {} (29)
    (16) edge node {} (33)
    (20) edge node {} (37)
    
    (21) edge node {} (22)
    (22) edge node {} (26)
    (22) edge node {} (23)
    (23) edge node {} (24)
    (23) edge node {} (27)
    (24) edge node {} (28)
    (21) edge node {} (25)
    (25) edge node {} (26)
    (25) edge node {} (29)
    (26) edge node {} (30)
    (26) edge node {} (27)
    (27) edge node {} (31)
    (27) edge node {} (28)
    (28) edge node {} (32)
    (30) edge node {} (31)
    (29) edge node {} (30)
    (31) edge node {} (32)
    (32) edge node {} (36)
    (33) edge node {} (29)
    (33) edge node {} (34)
    (34) edge node {} (35)
    (35) edge node {} (36)
    (30) edge node {} (34)
    (31) edge node {} (35)
    (32) edge node {} (36)
    (37) edge node {} (33)
    (38) edge node {} (34)
    (39) edge node {} (35)
    (40) edge node {} (36)
    (37) edge node {} (38)
    (38) edge node {} (39)
    (39) edge node {} (40)
    
    (41) edge node {} (42)
    (42) edge node {} (43)
    (43) edge node {} (44)
    (44) edge node {} (45)
    
    (24) edge node {} (41)
    (28) edge node {} (42)
    (32) edge node {} (43)
    (36) edge node {} (44)
    (40) edge node {} (45);

\end{tikzpicture}
}
\caption{}
\end{subfigure}
\begin{subfigure}[b]{0.3\linewidth}
          \centering

          \resizebox{\linewidth}{!}{

\tikzset{state/.style={circle,fill=blue!20,draw,minimum size=1.5em,inner sep=0pt},
            }
\begin{tikzpicture}

\filldraw[fill=blue!70!white, draw=black] (-0.4,0.4) rectangle (4.8,-3.7);
\filldraw[fill=red!50!white, draw=black, opacity=0.6] (-0.4,-1.75) rectangle (4.8,-6);

    \node[state] (1) at (0,0)  {1};
    \node[state] (2) [below=0.5cm of 1] {2};
    \node[state] (3) [below =0.5cm of 2] {3};
    \node[state] (4) [below =0.5cm of 3] {4};
    \node[state] (5) [right =0.5cm of 1] {5};
    \node[state] (6) [below=0.5cm of 5 ] {6};
    \node[state] (7) [below=0.5cm of 6] {7};
    \node[state] (8) [below =0.5cm of 7] {8};
    \node[state] (9) [right =0.5cm of 5] {9};
    \node[state] (10) [below =0.5cm of 9] {10};
    \node[state] (11) [below =0.5cm of 10] {10};
    \node[state] (12) [below =0.5cm of 11] {9};
    \node[state] (13) [right =0.5cm of 9] {8};
    \node[state] (14) [below =0.5cm of 13] {7};
    \node[state] (15) [below =0.5cm of 14] {6};
    \node[state] (16) [below =0.5cm of 15] {5};
    \node[state] (17) [right =0.5cm of 13] {4};
    \node[state] (18) [below =0.5cm of 17] {3};
    \node[state] (19) [below =0.5cm of 18] {2};
    \node[state] (20) [below =0.5cm of 19] {1};
    
    \node[state] (21) [below=0.5cm of 4] {1};
    \node[state] (22) [below=0.5cm of 21] {2};
    \node[state] (23) [below =0.5cm of 22] {};
    \node[state] (24) [below =0.5cm of 23] {};
    \node[state] (25) [right =0.5cm of 21] {5};
    \node[state] (26) [below=0.5cm of 25 ] {6};
    \node[state] (27) [below=0.5cm of 26] {};
    \node[state] (28) [below =0.5cm of 27] {};
    \node[state] (29) [right =0.5cm of 25] {9};
    \node[state] (30) [below =0.5cm of 29] {10};
    \node[state] (31) [below =0.5cm of 30] {};
    \node[state] (32) [below =0.5cm of 31] {};
    \node[state] (33) [right =0.5cm of 29] {8};
    \node[state] (34) [below =0.5cm of 33] {7};
    \node[state] (35) [below =0.5cm of 34] {};
    \node[state] (36) [below =0.5cm of 35] {};
    \node[state] (37) [right =0.5cm of 33] {4};
    \node[state] (38) [below =0.5cm of 37] {3};
    \node[state] (39) [below =0.5cm of 38] {};
    \node[state] (40) [below =0.5cm of 39] {};
    
    \node[state] (41) [below =0.5cm of 24] {};
    \node[state] (42) [below =0.5cm of 28] {};
    \node[state] (43) [below =0.5cm of 32] {};
    \node[state] (44) [below =0.5cm of 36] {};
    \node[state] (45) [below =0.5cm of 40] {};    
    
    \path[draw,thick]
    (1) edge node {} (2)
    (2) edge node {} (6)
    (2) edge node {} (3)
    (3) edge node {} (4)
    (3) edge node {} (7)
    (4) edge node {} (8)
    (1) edge node {} (5)
    (5) edge node {} (6)
    (5) edge node {} (9)
    (6) edge node {} (10)
    (6) edge node {} (7)
    (7) edge node {} (11)
    (7) edge node {} (8)
    (8) edge node {} (12)
    (10) edge node {} (11)
    (9) edge node {} (10)
    (11) edge node {} (12)
    (12) edge node {} (16)
    (13) edge node {} (9)
    (13) edge node {} (14)
    (14) edge node {} (15)
    (15) edge node {} (16)
    (10) edge node {} (14)
    (11) edge node {} (15)
    (12) edge node {} (16)
    (17) edge node {} (13)
    (18) edge node {} (14)
    (19) edge node {} (15)
    (20) edge node {} (16)
    (17) edge node {} (18)
    (18) edge node {} (19)
    (19) edge node {} (20)
    
    (4) edge node {} (21)
    (8) edge node {} (25)
    (12) edge node {} (29)
    (16) edge node {} (33)
    (20) edge node {} (37)
    
    (21) edge node {} (22)
    (22) edge node {} (26)
    (22) edge node {} (23)
    (23) edge node {} (24)
    (23) edge node {} (27)
    (24) edge node {} (28)
    (21) edge node {} (25)
    (25) edge node {} (26)
    (25) edge node {} (29)
    (26) edge node {} (30)
    (26) edge node {} (27)
    (27) edge node {} (31)
    (27) edge node {} (28)
    (28) edge node {} (32)
    (30) edge node {} (31)
    (29) edge node {} (30)
    (31) edge node {} (32)
    (32) edge node {} (36)
    (33) edge node {} (29)
    (33) edge node {} (34)
    (34) edge node {} (35)
    (35) edge node {} (36)
    (30) edge node {} (34)
    (31) edge node {} (35)
    (32) edge node {} (36)
    (37) edge node {} (33)
    (38) edge node {} (34)
    (39) edge node {} (35)
    (40) edge node {} (36)
    (37) edge node {} (38)
    (38) edge node {} (39)
    (39) edge node {} (40)
    
    (41) edge node {} (42)
    (42) edge node {} (43)
    (43) edge node {} (44)
    (44) edge node {} (45)
    
    (24) edge node {} (41)
    (28) edge node {} (42)
    (32) edge node {} (43)
    (36) edge node {} (44)
    (40) edge node {} (45);

\end{tikzpicture}
}
\caption{}
\end{subfigure}
\begin{subfigure}[b]{0.3\linewidth}
          \centering

          \resizebox{\linewidth}{!}{

\tikzset{state/.style={circle,fill=blue!20,draw,minimum size=1.5em,inner sep=0pt},
            }
\begin{tikzpicture}

\filldraw[fill=blue!70!white, draw=black] (-0.4,0.4) rectangle (4.8,-3.7);
\filldraw[fill=red!50!white, draw=black, opacity=0.6] (-0.4,-1.75) rectangle (4.8,-6);
\filldraw[fill=green!50!white, draw=black, opacity=0.6] (-0.4,-5) rectangle (4.8,-9.05);

    \node[state] (1) at (0,0)  {1};
    \node[state] (2) [below=0.5cm of 1] {2};
    \node[state] (3) [below =0.5cm of 2] {3};
    \node[state] (4) [below =0.5cm of 3] {4};
    \node[state] (5) [right =0.5cm of 1] {5};
    \node[state] (6) [below=0.5cm of 5 ] {6};
    \node[state] (7) [below=0.5cm of 6] {7};
    \node[state] (8) [below =0.5cm of 7] {8};
    \node[state] (9) [right =0.5cm of 5] {9};
    \node[state] (10) [below =0.5cm of 9] {10};
    \node[state] (11) [below =0.5cm of 10] {10};
    \node[state] (12) [below =0.5cm of 11] {9};
    \node[state] (13) [right =0.5cm of 9] {8};
    \node[state] (14) [below =0.5cm of 13] {7};
    \node[state] (15) [below =0.5cm of 14] {6};
    \node[state] (16) [below =0.5cm of 15] {5};
    \node[state] (17) [right =0.5cm of 13] {4};
    \node[state] (18) [below =0.5cm of 17] {3};
    \node[state] (19) [below =0.5cm of 18] {2};
    \node[state] (20) [below =0.5cm of 19] {1};
    
    \node[state] (21) [below=0.5cm of 4] {1};
    \node[state] (22) [below=0.5cm of 21] {2};
    \node[state] (23) [below =0.5cm of 22] {4};
    \node[state] (24) [below =0.5cm of 23] {1};
    \node[state] (25) [right =0.5cm of 21] {5};
    \node[state] (26) [below=0.5cm of 25 ] {6};
    \node[state] (27) [below=0.5cm of 26] {8};
    \node[state] (28) [below =0.5cm of 27] {5};
    \node[state] (29) [right =0.5cm of 25] {9};
    \node[state] (30) [below =0.5cm of 29] {10};
    \node[state] (31) [below =0.5cm of 30] {9};
    \node[state] (32) [below =0.5cm of 31] {9};
    \node[state] (33) [right =0.5cm of 29] {8};
    \node[state] (34) [below =0.5cm of 33] {7};
    \node[state] (35) [below =0.5cm of 34] {5};
    \node[state] (36) [below =0.5cm of 35] {8};
    \node[state] (37) [right =0.5cm of 33] {4};
    \node[state] (38) [below =0.5cm of 37] {3};
    \node[state] (39) [below =0.5cm of 38] {1};
    \node[state] (40) [below =0.5cm of 39] {4};
    
    \node[state] (41) [below =0.5cm of 24] {3};
    \node[state] (42) [below =0.5cm of 28] {7};
    \node[state] (43) [below =0.5cm of 32] {10};
    \node[state] (44) [below =0.5cm of 36] {6};
    \node[state] (45) [below =0.5cm of 40] {2};    
    
    \path[draw,thick]
    (1) edge node {} (2)
    (2) edge node {} (6)
    (2) edge node {} (3)
    (3) edge node {} (4)
    (3) edge node {} (7)
    (4) edge node {} (8)
    (1) edge node {} (5)
    (5) edge node {} (6)
    (5) edge node {} (9)
    (6) edge node {} (10)
    (6) edge node {} (7)
    (7) edge node {} (11)
    (7) edge node {} (8)
    (8) edge node {} (12)
    (10) edge node {} (11)
    (9) edge node {} (10)
    (11) edge node {} (12)
    (12) edge node {} (16)
    (13) edge node {} (9)
    (13) edge node {} (14)
    (14) edge node {} (15)
    (15) edge node {} (16)
    (10) edge node {} (14)
    (11) edge node {} (15)
    (12) edge node {} (16)
    (17) edge node {} (13)
    (18) edge node {} (14)
    (19) edge node {} (15)
    (20) edge node {} (16)
    (17) edge node {} (18)
    (18) edge node {} (19)
    (19) edge node {} (20)
    
    (4) edge node {} (21)
    (8) edge node {} (25)
    (12) edge node {} (29)
    (16) edge node {} (33)
    (20) edge node {} (37)
    
    (21) edge node {} (22)
    (22) edge node {} (26)
    (22) edge node {} (23)
    (23) edge node {} (24)
    (23) edge node {} (27)
    (24) edge node {} (28)
    (21) edge node {} (25)
    (25) edge node {} (26)
    (25) edge node {} (29)
    (26) edge node {} (30)
    (26) edge node {} (27)
    (27) edge node {} (31)
    (27) edge node {} (28)
    (28) edge node {} (32)
    (30) edge node {} (31)
    (29) edge node {} (30)
    (31) edge node {} (32)
    (32) edge node {} (36)
    (33) edge node {} (29)
    (33) edge node {} (34)
    (34) edge node {} (35)
    (35) edge node {} (36)
    (30) edge node {} (34)
    (31) edge node {} (35)
    (32) edge node {} (36)
    (37) edge node {} (33)
    (38) edge node {} (34)
    (39) edge node {} (35)
    (40) edge node {} (36)
    (37) edge node {} (38)
    (38) edge node {} (39)
    (39) edge node {} (40)
    
    (41) edge node {} (42)
    (42) edge node {} (43)
    (43) edge node {} (44)
    (44) edge node {} (45)
    
    (24) edge node {} (41)
    (28) edge node {} (42)
    (32) edge node {} (43)
    (36) edge node {} (44)
    (40) edge node {} (45);

\end{tikzpicture}
}
\caption{}
\end{subfigure}

\caption{}
\label{vtt}
\end{figure}

\subsection{Extending to general cases}

\begin{lemma}
For any positive integer $k$,
\[d_k\left(G_{r,l}\right) \geq \frac{(k+1)(k+2)}{2},\] where $r=n(k+1)+I$ and $l=(k+2)+J$ for positive integers $n,m$ and integers $0 \leq I < k+1$, $0 \leq J < k+2$ if at least one of the following is true:
\begin{itemize}
\item $n,m \geq 2$;
\item $n \geq 2$, $m \geq 1$, and either $J=0$ or $\left\lceil \frac{k+2}{2} \right\rceil \leq J < k+2$;
\item $m \geq 2$, $n \geq 1$, and either $I=0$ or  $\left\lceil \frac{k+1}{2} \right\rceil \leq I < k+1$.
\end{itemize}
\end{lemma}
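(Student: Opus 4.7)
The strategy is to decompose $G_{r,l}$ as a union of copies of $G_{k+1,k+2}$ arranged in a rectangular array of sub-blocks (with overlap when $I>0$ or $J>0$, tiling exactly otherwise), to color each sub-block isomorphically to the standard block coloring $\varphi$, and then to invoke Lemma~\ref{blockproof} to conclude that every vertex in $G_{r,l}$ is within distance $k$ from each of the $(k+1)(k+2)/2$ colors. This simply iterates the two-block construction of Lemma~\ref{c1} and the three-block construction of Lemma~\ref{c2} (and similarly Lemmas~\ref{r1},~\ref{r2}) to arbitrarily many rows and columns of sub-blocks at once, using Lemma~\ref{commute} to harmonize the vertical and horizontal extensions.

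First, I would choose valid overlap sequences: row overlaps $t_1,\ldots,t_n$ each lying in $\{1,\ldots,\lfloor(k+1)/2\rfloor\}$ and summing to $k+1-I$, and column overlaps $s_1,\ldots,s_m$ each in $\{1,\ldots,\lfloor(k+2)/2\rfloor\}$ and summing to $k+2-J$ (with the conventions that when $I=0$ only $n$ disjoint row-blocks are needed and similarly for $J$). The three cases in the hypothesis correspond exactly to the regimes in which such sequences exist: if $n\geq 2$ and $m\geq 2$, any required total can be split across two or more slots, each respecting its upper bound; if $m=1$, the single column slot must absorb $k+2-J$ entirely, forcing $J=0$ or $J\geq \lceil(k+2)/2\rceil$; symmetrically when $n=1$.

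Next, I would place the top-left copy of $G_{k+1,k+2}$ under the coloring $\varphi$ and propagate. Moving rightward across a fixed row of sub-blocks, each successive block receives $h_{s_j}$ applied to its left neighbor's coloring, matching on the overlapping $s_j$ columns by the argument in Lemma~\ref{c1}. Moving downward within a fixed column of sub-blocks, apply $v_{t_i}$, matching on the overlapping $t_i$ rows by the argument in Lemma~\ref{r1}. For an interior sub-block at position $(i,j)$, the assigned coloring is the composition of all preceding $h_{s_{j'}}$ and $v_{t_{i'}}$ applied to $\varphi$; Lemma~\ref{commute} guarantees that the result is independent of the order of application, so each pair of adjacent sub-blocks (horizontally, vertically, or diagonally) agrees on its shared vertices, and the global coloring of $G_{r,l}$ is unambiguously well-defined.

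Since every vertex of $G_{r,l}$ lies in at least one sub-block colored isomorphically to $\varphi$, Lemma~\ref{blockproof} supplies a vertex of every color within distance $k$, giving $d_k(G_{r,l})\geq (k+1)(k+2)/2$. The main obstacle will be the bookkeeping around the existence of valid overlap sequences in each of the three regimes, together with the verification that two sub-blocks which overlap in a shared sub-rectangle (for instance, diagonally placed blocks sharing a corner region) truly agree throughout that sub-rectangle rather than just along a single row or column boundary; Lemma~\ref{commute} is precisely the tool that makes the latter go through, so once the overlap-sequence analysis is dispatched, the rest reduces to careful indexing.
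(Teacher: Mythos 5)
Your proposal is, at its core, the same construction as the paper's: cover $G_{r,l}$ by overlapping copies of $G_{k+1,k+2}$, color the top-left copy with $\varphi$, propagate to the right with the maps $h_s$ and downward with the maps $v_t$, and use Lemma~\ref{commute} to make the array of block-colorings consistent; Lemma~\ref{blockproof} then finishes. The organizational difference is that the paper concentrates all of the slack in the last one or two blocks of each direction (so that most copies are pairwise disjoint and only Lemmas~\ref{c1}, \ref{c2}, \ref{r1}, \ref{r2} are ever invoked), whereas you distribute the overlap over a general sequence $t_1,\ldots,t_n$ and $s_1,\ldots,s_m$. Your version is more uniform and arguably cleaner, and your derivation of the three hypothesis regimes from the solvability of the overlap-sum constraints is the right explanation of where those conditions come from.

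There is one concrete slip to patch: as literally stated, your overlap sequences need not exist. You require every $t_i\in\{1,\ldots,\lfloor (k+1)/2\rfloor\}$ with $\sum t_i = k+1-I$, but for example with $k=3$, $I=1$, $n=5$ the sum must be $3$ while five positive terms sum to at least $5$; the same failure occurs on the column side for large $m$. Your parenthetical convention only covers the case $I=0$ (all overlaps absent), not the mixed case where some consecutive blocks must be disjoint and others must overlap. The fix is trivial — allow $t_i=0$ (equivalently, take $v_0=\mathrm{id}$ and note the agreement condition is vacuous for disjoint blocks), or follow the paper and push all nonzero overlaps into the final one or two blocks of each row and column — and with that adjustment the existence analysis, the propagation, and the diagonal-agreement argument via Lemma~\ref{commute} all go through.
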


\begin{proof}
Let $r=n(k+1)+I$ and $l=(k+2)+J$ which satisfy one or more of the conditions given. We will consider how to color the grid graph $G_{r,l}$ in cases depending on the value of $I$. Each of cases will then break up into subcases depending on the value of $J$.

\textbf{Case 1: $\mathbf{I=0}$.} If $J=0$, then we can simply repeat the coloring $\varphi$ $nm$ times over the entire grid, by breaking it up into disjoint copies of $G_{k+1,k+2}$. If $\left\lceil \frac{k+2}{2} \right\rceil \leq J < k+2$, then we can break up $G_{r,l}$ into $n(m-1)$ disjoint copies of $G_{k+1,k+2}$ (the first $(m-1)(k+2)$ columns) which can each be colored with $\varphi$ as well as $n$ copies of $G_{k+1,k+2+J}$ (the final $k+2+J$ columns) which can each be colored according to Lemma~\ref{c1}. Similarly, if $1 \leq J < \left\lceil \frac{k+2}{2} \right\rceil$, then $m \geq 2$, and $G_{r,l}$ can be broken up into $n(m-2)$ disjoint copies of $G_{k+1,k+2}$ (the first $(m-2)(k+2)$ columns) and $n$ copies of $G_{k+1,2(k+2)+J}$ (the final $2(k+2)+J$ columns). Again, the copies of $G_{k+1,k+2}$ can each be colored with $\varphi$ and each copy of $G_{k+1,2(k+2)+J}$ can be colored according to Lemma~\ref{c2}.

\textbf{Case 2: $\mathbf{\left\lceil \frac{k+1}{2} \right\rceil \leq I < k+1}$}. If $J=0$, then we can consider $G_{r,l}$ as $(n-1)m$ disjoint copies of $G_{k+1,k+2}$ (the first $(n-1)(k+1)$ rows) and $m$ disjoint copies of $G_{k+1+I,k+2}$ (the final $k+1+I$ rows). The copies of $G_{k+1,k+2}$ can each be colored with $\varphi$ and each copy of $G_{k+1+I,k+2}$ can be colored according to Lemma~\ref{r1}.

If $\left\lceil \frac{k+2}{2} \right\rceil \leq J < k+2$, then $G_{r,l}$ breaks up into a copy of $G_{r,(m-1)(k+2)}$ (the first $(m-1)(k+2)$ columns) and a copy of $G_{r,k+2+J}$ (the final $k+2+J$ columns). Color the copy of $G_{r,(m-1)(k+2)}$ as above. The copy of $G_{r,k+2+J}$ breaks up further into $n-1$ disjoint copies of $G_{k+1,k+2+J}$ (the first $(n-1)(k+1)$ rows) and one copy of $G_{k+1+I,k+2+J}$ (the final $k+1+I$ rows). Each copy of $G_{k+1,k+2+J}$ can be colored as in Lemma~\ref{c1}.

The copy of $G_{k+1+I,k+2+J}$ can be colored by first coloring the initial $G_{k+1,k+2}$ block (the first $k+1$ rows and the first $k+2$ columns) by the standard coloring $\varphi$. We can then extend this coloring in two different directions - ``horizontally" and ``vertically." Extend the coloring $\varphi$ to the remaining vertices of the first $k+1$ rows as in Lemma~\ref{c1} with the coloring $h_s(\varphi)$ on the final $k+2$ columns for $s=k+2-J$. Similarly, extend $\varphi$ to the remaining vertices of the first $k+2$ columns as in Lemma~\ref{r1} with the coloring $v_t(\varphi)$ on the final $k+1$ rows where $t=k+1-I$. Finally, we may either extend the coloring $h_s(\varphi)$ to the final vertices with the coloring $v_t(h_s(\varphi))$ or alternatively, extend the coloring $v_t(\varphi)$ to $h_s(v_t(\varphi))$. By Lemma~\ref{commute}, the extensions will agree with one another. This case is illustrated in Figure~\ref{hsvt}.

\begin{figure}
\begin{subfigure}[b]{0.48\linewidth}
          \centering

          \resizebox{\linewidth}{!}{

\tikzset{state/.style={circle,fill=blue!20,draw,minimum size=1.5em,inner sep=0pt},
            }
\begin{tikzpicture}

\filldraw[fill=blue!70!white, draw=black] (-0.4,0.4) rectangle (4.8,-3.7);
\filldraw[fill=green!50!white, draw=black, opacity=0.6] (-0.4,-1.8) rectangle (4.8,-5.8);
\filldraw[fill=black!50!white, draw=black, opacity=0.6] (3.8,-1.8) rectangle (9.1,-5.8);

    \node[state] (1) {1};
    \node[state] (2) [below=.5 of 1] {2};
    \node[state] (3) [below =.5 of 2] {3};
    \node[state] (4) [below =.5 of 3] {4};
    \node[state] (5) [right =.5 of 1] {5};
    \node[state] (6) [below=.5 of 5 ] {6};
    \node[state] (7) [below=.5 of 6] {7};
    \node[state] (8) [below =.5 of 7] {8};
    \node[state] (9) [right =.5 of 5] {9};
    \node[state] (10) [below =.5 of 9] {10};
    \node[state] (11) [below =.5 of 10] {10};
    \node[state] (12) [below =.5 of 11] {9};
    \node[state] (13) [right =.5 of 9] {8};
    \node[state] (14) [below =.5 of 13] {7};
    \node[state] (15) [below =.5 of 14] {6};
    \node[state] (16) [below =.5 of 15] {5};
    \node[state] (17) [right =.5 of 13] {4};
    \node[state] (18) [below =.5 of 17] {3};
    \node[state] (19) [below =.5 of 18] {2};
    \node[state] (20) [below =.5 of 19] {1};
    
    \node[state] (21) [below=.5 of 4] {1};
    \node[state] (22) [below=.5 of 21] {2};
  
    \node[state] (25) [right =.5 of 21] {5};
    \node[state] (26) [below=.5 of 25 ] {6};
 
    \node[state] (29) [right =.5 of 25] {9};
    \node[state] (30) [below =.5 of 29] {10};
   
    \node[state] (33) [right =.5 of 29] {8};
    \node[state] (34) [below =.5 of 33] {7};
  
    \node[state] (37) [right =.5 of 33] {4};
    \node[state] (38) [below =.5 of 37] {3};
    
    \node[state] (41) [right=.5 of 17] {};
    \node[state] (42) [below=.5 of 41] {};
    \node[state] (43) [below =.5 of 42] {7};
    \node[state] (44) [below =.5 of 43] {8};
    \node[state] (45) [right =.5 of 41] {};
    \node[state] (46) [below=.5 of 45 ] {};
    \node[state] (47) [below=.5 of 46] {10};
    \node[state] (48) [below =.5 of 47] {9};
    \node[state] (49) [right =.5 of 45] {};
    \node[state] (50) [below =.5 of 49] {};
    \node[state] (51) [below =.5 of 50] {6};
    \node[state] (52) [below =.5 of 51] {5};
    \node[state] (53) [right =.5 of 49] {};
    \node[state] (54) [below =.5 of 53] {};
    \node[state] (55) [below =.5 of 54] {3};
    \node[state] (56) [below =.5 of 55] {4};
    
    \node[state] (61) [right=.5 of 37] {5};
    \node[state] (62) [below=.5 of 61] {6};

    \node[state] (65) [right =.5 of 61] {9};
    \node[state] (66) [below=.5 of 65 ] {10};

    \node[state] (69) [right =.5 of 65] {8};
    \node[state] (70) [below =.5 of 69] {7};

    \node[state] (73) [right =.5 of 69] {1};
    \node[state] (74) [below =.5 of 73] {2};

    \path[draw,thick]
    (1) edge node {} (2)
    (2) edge node {} (6)
    (2) edge node {} (3)
    (3) edge node {} (4)
    (3) edge node {} (7)
    (4) edge node {} (8)
    (1) edge node {} (5)
    (5) edge node {} (6)
    (5) edge node {} (9)
    (6) edge node {} (10)
    (6) edge node {} (7)
    (7) edge node {} (11)
    (7) edge node {} (8)
    (8) edge node {} (12)
    (10) edge node {} (11)
    (9) edge node {} (10)
    (11) edge node {} (12)
    (12) edge node {} (16)
    (13) edge node {} (9)
    (13) edge node {} (14)
    (14) edge node {} (15)
    (15) edge node {} (16)
    (10) edge node {} (14)
    (11) edge node {} (15)
    (12) edge node {} (16)
    (17) edge node {} (13)
    (18) edge node {} (14)
    (19) edge node {} (15)
    (20) edge node {} (16)
    (17) edge node {} (18)
    (18) edge node {} (19)
    (19) edge node {} (20)
    
    (4) edge node {} (21)
    (8) edge node {} (25)
    (12) edge node {} (29)
    (16) edge node {} (33)
    (20) edge node {} (37)
    
    (17) edge node {} (41)
    (18) edge node {} (42)
    (19) edge node {} (43)
    (20) edge node {} (44)
    
    (21) edge node {} (22)
    (22) edge node {} (26)
    (21) edge node {} (25)
    (25) edge node {} (26)
    (25) edge node {} (29)
    (26) edge node {} (30)
    (29) edge node {} (30)
    (33) edge node {} (29)
    (33) edge node {} (34)
    (30) edge node {} (34)
    (37) edge node {} (33)
    (38) edge node {} (34)
    (37) edge node {} (38)

    (37) edge node {} (61)
    (38) edge node {} (62)
 
    (41) edge node {} (42)
    (42) edge node {} (46)
    (42) edge node {} (43)
    (43) edge node {} (44)
    (43) edge node {} (47)
    (44) edge node {} (48)
    (41) edge node {} (45)
    (45) edge node {} (46)
    (45) edge node {} (49)
    (46) edge node {} (50)
    (46) edge node {} (47)
    (47) edge node {} (51)
    (47) edge node {} (48)
    (48) edge node {} (52)
    (50) edge node {} (51)
    (49) edge node {} (50)
    (51) edge node {} (52)
    (52) edge node {} (56)
    (53) edge node {} (49)
    (53) edge node {} (54)
    (54) edge node {} (55)
    (55) edge node {} (56)
    (50) edge node {} (54)
    (51) edge node {} (55)
    (52) edge node {} (56)
    
    (44) edge node {} (61)
    (48) edge node {} (65)
    (52) edge node {} (69)
    (56) edge node {} (73)
    
    (61) edge node {} (62)
    (62) edge node {} (66)
    (61) edge node {} (65)
    (65) edge node {} (66)
    (65) edge node {} (69)
    (66) edge node {} (70)
    (69) edge node {} (70)
    (73) edge node {} (69)
    (73) edge node {} (74)
    (70) edge node {} (74);
\end{tikzpicture}
}
\caption{$(h_1 \circ v_2)(\varphi)$}
\end{subfigure}
\begin{subfigure}[b]{0.48\linewidth}
          \centering

          \resizebox{\linewidth}{!}{

\tikzset{state/.style={circle,fill=blue!20,draw,minimum size=1.5em,inner sep=0pt},
            }
\begin{tikzpicture}

\filldraw[fill=blue!70!white, draw=black] (-0.4,0.4) rectangle (4.8,-3.7);
\filldraw[fill=red!50!white, draw=black, opacity=0.6] (3.8,0.4) rectangle (9.1,-3.7);
\filldraw[fill=black!50!white, draw=black, opacity=0.6] (3.8,-1.8) rectangle (9.1,-5.8);

    \node[state] (1) {1};
    \node[state] (2) [below=.5 of 1] {2};
    \node[state] (3) [below =.5 of 2] {3};
    \node[state] (4) [below =.5 of 3] {4};
    \node[state] (5) [right =.5 of 1] {5};
    \node[state] (6) [below=.5 of 5 ] {6};
    \node[state] (7) [below=.5 of 6] {7};
    \node[state] (8) [below =.5 of 7] {8};
    \node[state] (9) [right =.5 of 5] {9};
    \node[state] (10) [below =.5 of 9] {10};
    \node[state] (11) [below =.5 of 10] {10};
    \node[state] (12) [below =.5 of 11] {9};
    \node[state] (13) [right =.5 of 9] {8};
    \node[state] (14) [below =.5 of 13] {7};
    \node[state] (15) [below =.5 of 14] {6};
    \node[state] (16) [below =.5 of 15] {5};
    \node[state] (17) [right =.5 of 13] {4};
    \node[state] (18) [below =.5 of 17] {3};
    \node[state] (19) [below =.5 of 18] {2};
    \node[state] (20) [below =.5 of 19] {1};
    
    \node[state] (21) [below=.5 of 4] {};
    \node[state] (22) [below=.5 of 21] {};
  
    \node[state] (25) [right =.5 of 21] {};
    \node[state] (26) [below=.5 of 25 ] {};
 
    \node[state] (29) [right =.5 of 25] {};
    \node[state] (30) [below =.5 of 29] {};
   
    \node[state] (33) [right =.5 of 29] {};
    \node[state] (34) [below =.5 of 33] {};
  
    \node[state] (37) [right =.5 of 33] {4};
    \node[state] (38) [below =.5 of 37] {3};
    
    \node[state] (41) [right=.5 of 17] {5};
    \node[state] (42) [below=.5 of 41] {6};
    \node[state] (43) [below =.5 of 42] {7};
    \node[state] (44) [below =.5 of 43] {8};
    \node[state] (45) [right =.5 of 41] {9};
    \node[state] (46) [below=.5 of 45 ] {10};
    \node[state] (47) [below=.5 of 46] {10};
    \node[state] (48) [below =.5 of 47] {9};
    \node[state] (49) [right =.5 of 45] {8};
    \node[state] (50) [below =.5 of 49] {7};
    \node[state] (51) [below =.5 of 50] {6};
    \node[state] (52) [below =.5 of 51] {5};
    \node[state] (53) [right =.5 of 49] {1};
    \node[state] (54) [below =.5 of 53] {2};
    \node[state] (55) [below =.5 of 54] {3};
    \node[state] (56) [below =.5 of 55] {4};
    
    \node[state] (61) [right=.5 of 37] {5};
    \node[state] (62) [below=.5 of 61] {6};

    \node[state] (65) [right =.5 of 61] {9};
    \node[state] (66) [below=.5 of 65 ] {10};

    \node[state] (69) [right =.5 of 65] {8};
    \node[state] (70) [below =.5 of 69] {7};

    \node[state] (73) [right =.5 of 69] {1};
    \node[state] (74) [below =.5 of 73] {2};

    \path[draw,thick]
    (1) edge node {} (2)
    (2) edge node {} (6)
    (2) edge node {} (3)
    (3) edge node {} (4)
    (3) edge node {} (7)
    (4) edge node {} (8)
    (1) edge node {} (5)
    (5) edge node {} (6)
    (5) edge node {} (9)
    (6) edge node {} (10)
    (6) edge node {} (7)
    (7) edge node {} (11)
    (7) edge node {} (8)
    (8) edge node {} (12)
    (10) edge node {} (11)
    (9) edge node {} (10)
    (11) edge node {} (12)
    (12) edge node {} (16)
    (13) edge node {} (9)
    (13) edge node {} (14)
    (14) edge node {} (15)
    (15) edge node {} (16)
    (10) edge node {} (14)
    (11) edge node {} (15)
    (12) edge node {} (16)
    (17) edge node {} (13)
    (18) edge node {} (14)
    (19) edge node {} (15)
    (20) edge node {} (16)
    (17) edge node {} (18)
    (18) edge node {} (19)
    (19) edge node {} (20)
    
    (4) edge node {} (21)
    (8) edge node {} (25)
    (12) edge node {} (29)
    (16) edge node {} (33)
    (20) edge node {} (37)
    
    (17) edge node {} (41)
    (18) edge node {} (42)
    (19) edge node {} (43)
    (20) edge node {} (44)
    
    (21) edge node {} (22)
    (22) edge node {} (26)
    (21) edge node {} (25)
    (25) edge node {} (26)
    (25) edge node {} (29)
    (26) edge node {} (30)
    (29) edge node {} (30)
    (33) edge node {} (29)
    (33) edge node {} (34)
    (30) edge node {} (34)
    (37) edge node {} (33)
    (38) edge node {} (34)
    (37) edge node {} (38)

    (37) edge node {} (61)
    (38) edge node {} (62)
 
    (41) edge node {} (42)
    (42) edge node {} (46)
    (42) edge node {} (43)
    (43) edge node {} (44)
    (43) edge node {} (47)
    (44) edge node {} (48)
    (41) edge node {} (45)
    (45) edge node {} (46)
    (45) edge node {} (49)
    (46) edge node {} (50)
    (46) edge node {} (47)
    (47) edge node {} (51)
    (47) edge node {} (48)
    (48) edge node {} (52)
    (50) edge node {} (51)
    (49) edge node {} (50)
    (51) edge node {} (52)
    (52) edge node {} (56)
    (53) edge node {} (49)
    (53) edge node {} (54)
    (54) edge node {} (55)
    (55) edge node {} (56)
    (50) edge node {} (54)
    (51) edge node {} (55)
    (52) edge node {} (56)
    
    (44) edge node {} (61)
    (48) edge node {} (65)
    (52) edge node {} (69)
    (56) edge node {} (73)
    
    (61) edge node {} (62)
    (62) edge node {} (66)
    (61) edge node {} (65)
    (65) edge node {} (66)
    (65) edge node {} (69)
    (66) edge node {} (70)
    (69) edge node {} (70)
    (73) edge node {} (69)
    (73) edge node {} (74)
    (70) edge node {} (74);
\end{tikzpicture}
}
\caption{$(v_2 \circ h_1)(\varphi)$}
\end{subfigure}

\begin{subfigure}[b]{0.48\linewidth}
          \centering

          \resizebox{\linewidth}{!}{

\tikzset{state/.style={circle,fill=blue!20,draw,minimum size=1.5em,inner sep=0pt},
            }
\begin{tikzpicture}

\filldraw[fill=blue!70!white, draw=black] (-0.4,0.4) rectangle (4.8,-3.7);
\filldraw[fill=red!50!white, draw=black, opacity=0.6] (3.8,0.4) rectangle (9.1,-3.7);
\filldraw[fill=green!50!white, draw=black, opacity=0.6] (-0.4,-1.8) rectangle (4.8,-5.8);
\filldraw[fill=black!50!white, draw=black, opacity=0.6] (3.8,-1.8) rectangle (9.1,-5.8);

    \node[state] (1) {1};
    \node[state] (2) [below=.5 of 1] {2};
    \node[state] (3) [below =.5 of 2] {3};
    \node[state] (4) [below =.5 of 3] {4};
    \node[state] (5) [right =.5 of 1] {5};
    \node[state] (6) [below=.5 of 5 ] {6};
    \node[state] (7) [below=.5 of 6] {7};
    \node[state] (8) [below =.5 of 7] {8};
    \node[state] (9) [right =.5 of 5] {9};
    \node[state] (10) [below =.5 of 9] {10};
    \node[state] (11) [below =.5 of 10] {10};
    \node[state] (12) [below =.5 of 11] {9};
    \node[state] (13) [right =.5 of 9] {8};
    \node[state] (14) [below =.5 of 13] {7};
    \node[state] (15) [below =.5 of 14] {6};
    \node[state] (16) [below =.5 of 15] {5};
    \node[state] (17) [right =.5 of 13] {4};
    \node[state] (18) [below =.5 of 17] {3};
    \node[state] (19) [below =.5 of 18] {2};
    \node[state] (20) [below =.5 of 19] {1};
    
    \node[state] (21) [below=.5 of 4] {1};
    \node[state] (22) [below=.5 of 21] {2};
  
    \node[state] (25) [right =.5 of 21] {5};
    \node[state] (26) [below=.5 of 25 ] {6};
 
    \node[state] (29) [right =.5 of 25] {9};
    \node[state] (30) [below =.5 of 29] {10};
   
    \node[state] (33) [right =.5 of 29] {8};
    \node[state] (34) [below =.5 of 33] {7};
  
    \node[state] (37) [right =.5 of 33] {4};
    \node[state] (38) [below =.5 of 37] {3};
    
    \node[state] (41) [right=.5 of 17] {5};
    \node[state] (42) [below=.5 of 41] {6};
    \node[state] (43) [below =.5 of 42] {7};
    \node[state] (44) [below =.5 of 43] {8};
    \node[state] (45) [right =.5 of 41] {9};
    \node[state] (46) [below=.5 of 45 ] {10};
    \node[state] (47) [below=.5 of 46] {10};
    \node[state] (48) [below =.5 of 47] {9};
    \node[state] (49) [right =.5 of 45] {8};
    \node[state] (50) [below =.5 of 49] {7};
    \node[state] (51) [below =.5 of 50] {6};
    \node[state] (52) [below =.5 of 51] {5};
    \node[state] (53) [right =.5 of 49] {1};
    \node[state] (54) [below =.5 of 53] {2};
    \node[state] (55) [below =.5 of 54] {3};
    \node[state] (56) [below =.5 of 55] {4};
    
    \node[state] (61) [right=.5 of 37] {5};
    \node[state] (62) [below=.5 of 61] {6};

    \node[state] (65) [right =.5 of 61] {9};
    \node[state] (66) [below=.5 of 65 ] {10};

    \node[state] (69) [right =.5 of 65] {8};
    \node[state] (70) [below =.5 of 69] {7};

    \node[state] (73) [right =.5 of 69] {1};
    \node[state] (74) [below =.5 of 73] {2};

    \path[draw,thick]
    (1) edge node {} (2)
    (2) edge node {} (6)
    (2) edge node {} (3)
    (3) edge node {} (4)
    (3) edge node {} (7)
    (4) edge node {} (8)
    (1) edge node {} (5)
    (5) edge node {} (6)
    (5) edge node {} (9)
    (6) edge node {} (10)
    (6) edge node {} (7)
    (7) edge node {} (11)
    (7) edge node {} (8)
    (8) edge node {} (12)
    (10) edge node {} (11)
    (9) edge node {} (10)
    (11) edge node {} (12)
    (12) edge node {} (16)
    (13) edge node {} (9)
    (13) edge node {} (14)
    (14) edge node {} (15)
    (15) edge node {} (16)
    (10) edge node {} (14)
    (11) edge node {} (15)
    (12) edge node {} (16)
    (17) edge node {} (13)
    (18) edge node {} (14)
    (19) edge node {} (15)
    (20) edge node {} (16)
    (17) edge node {} (18)
    (18) edge node {} (19)
    (19) edge node {} (20)
    
    (4) edge node {} (21)
    (8) edge node {} (25)
    (12) edge node {} (29)
    (16) edge node {} (33)
    (20) edge node {} (37)
    
    (17) edge node {} (41)
    (18) edge node {} (42)
    (19) edge node {} (43)
    (20) edge node {} (44)
    
    (21) edge node {} (22)
    (22) edge node {} (26)
    (21) edge node {} (25)
    (25) edge node {} (26)
    (25) edge node {} (29)
    (26) edge node {} (30)
    (29) edge node {} (30)
    (33) edge node {} (29)
    (33) edge node {} (34)
    (30) edge node {} (34)
    (37) edge node {} (33)
    (38) edge node {} (34)
    (37) edge node {} (38)

    (37) edge node {} (61)
    (38) edge node {} (62)
 
    (41) edge node {} (42)
    (42) edge node {} (46)
    (42) edge node {} (43)
    (43) edge node {} (44)
    (43) edge node {} (47)
    (44) edge node {} (48)
    (41) edge node {} (45)
    (45) edge node {} (46)
    (45) edge node {} (49)
    (46) edge node {} (50)
    (46) edge node {} (47)
    (47) edge node {} (51)
    (47) edge node {} (48)
    (48) edge node {} (52)
    (50) edge node {} (51)
    (49) edge node {} (50)
    (51) edge node {} (52)
    (52) edge node {} (56)
    (53) edge node {} (49)
    (53) edge node {} (54)
    (54) edge node {} (55)
    (55) edge node {} (56)
    (50) edge node {} (54)
    (51) edge node {} (55)
    (52) edge node {} (56)
    
    (44) edge node {} (61)
    (48) edge node {} (65)
    (52) edge node {} (69)
    (56) edge node {} (73)
    
    (61) edge node {} (62)
    (62) edge node {} (66)
    (61) edge node {} (65)
    (65) edge node {} (66)
    (65) edge node {} (69)
    (66) edge node {} (70)
    (69) edge node {} (70)
    (73) edge node {} (69)
    (73) edge node {} (74)
    (70) edge node {} (74);
\end{tikzpicture}
}
\caption{}
\end{subfigure}
\caption{}
\label{hsvt}
\end{figure}

If  $1 \leq J < \left\lceil \frac{k+2}{2} \right\rceil$, then $m \geq 2$ and $G_{r,l}$ breaks up into a copy of $G_{r,(m-2)(k+2)}$ (the first $(m-2)(k+2)$ columns) and a copy of $G_{r,2(k+2)+J}$ (the final $2(k+2)+J$ columns). Color the copy of $G_{r,(m-2)(k+2)}$ as above. The copy of $G_{r,2(k+2)+J}$ breaks up further into $n-1$ disjoint copies of $G_{k+1,2(k+2)+J}$ (the first $(n-1)(k+1)$ rows) and one copy of $G_{k+1+I,2(k+2)+J}$ (the final $k+1+I$ rows). Each copy of $G_{k+1,2(k+2)+J}$ can be colored as in Lemma~\ref{c2}. The copy of $G_{k+1+I,2(k+2)+J}$ can be colored by first coloring the initial $G_{k+1,k+2}$ block (the first $k+1$ rows and the first $k+2$ columns) by the standard coloring $\varphi$. As in Lemma~\ref{c2}, we then extend this coloring to the remaining vertices of the first $k+1$ rows with the colorings $h_{s_1}(\varphi)$ and $h_{s_2}(h_{s_1}(\varphi))$. We also extend down to the final $I$ rows of the first $k+2$ columns with the coloring $v_t(\varphi)$ as in Lemma~\ref{r1} where $t = k+1-I$. Now we can complete the coloring by extending $v_t(\varphi)$ horizontally with $h_{s_1}(v_t(\varphi))$ and then $h_{s_2}(h_{s_1}(v_t(\varphi)))$. By Lemma~\ref{commute} we know that these extensions agree on all vertices where they overlap with the colorings $h_{s_1}(\varphi)$ and $h_{s_2}(h_{s_1}(\varphi))$.

\textbf{Case 3: $\mathbf{1 \leq I < \left\lceil \frac{k+1}{2} \right\rceil}$}. Now we may assume that $n \geq 2$. If $J=0$, then we can consider $G_{r,l}$ as $(n-2)m$ disjoint copies of $G_{k+1,k+2}$ (the first $(n-2)(k+1)$ rows) and $m$ disjoint copies of $G_{2(k+1)+I,k+2}$ (the final $k+1+I$ rows). The copies of $G_{k+1,k+2}$ can each be colored with $\varphi$ and each copy of $G_{2(k+1)+I,k+2}$ can be colored according to Lemma~\ref{r2}.

If $\left\lceil \frac{k+2}{2} \right\rceil \leq J < k+2$, then $G_{r,l}$ breaks up into a copy of $G_{r,(m-1)(k+2)}$ (the first $(m-1)(k+2)$ columns) and a copy of $G_{r,k+2+J}$ (the final $k+2+J$ columns). Color the copy of $G_{r,(m-1)(k+2)}$ as above. The copy of $G_{r,k+2+J}$ breaks up further into $n-2$ disjoint copies of $G_{k+1,k+2+J}$ (the first $(n-1)(k+1)$ rows) and one copy of $G_{2(k+1)+I,k+2+J}$ (the final $2(k+1)+I$ rows). Each copy of $G_{k+1,k+2+J}$ can be colored as in Lemma~\ref{c1}.

The copy of $G_{2(k+1)+I,k+2+J}$ can be colored by first coloring the initial $G_{k+1,k+2}$ block (the first $k+1$ rows and the first $k+2$ columns) by the standard coloring $\varphi$. As before, we can then extend this coloring in two different directions. Extend the coloring $\varphi$ to the remaining vertices of the first $k+1$ rows as in Lemma~\ref{c1} with the coloring $h_s(\varphi)$ on the final $k+2$ columns for $s=k+2-J$. Similarly, extend $\varphi$ in the first $k+2$ columns to the rows $k+2$ through $k+1+\left\lceil \frac{k+1}{2} \right\rceil$ with the coloring $v_{t_1}(\varphi)$ where $t_1=\left\lfloor \frac{k+1}{2} \right\rfloor$. Then extend this coloring to the final $\left\lfloor \frac{k+1}{2} \right\rfloor + I$ rows in the first $k+2$ columns by $v_{t_2}(v_{t_1}(\varphi))$ where \[t_2 = \left\lceil \frac{k+2}{2} \right\rceil -I,\] which is valid because of the conditions on $I$. Next, we extend the current coloring to the final vertices with $v_{t_1}(h_s(\varphi))$ and $v_{t_2}(v_{t_1}(h_s(\varphi)))$. Alternatively, we could finish the coloring with the extensions $h_s(v_{t_1}(\varphi))$ and $v_{t_2}(h_s(v_{t_1}(\varphi)))$ or $h_s(v_{t_1}(\varphi))$ as well as $h_s(v_{t_2}(v_{t_1}(\varphi)))$. By Lemma~\ref{commute}, these extensions will all agree with one another.

Finally, if  $1 \leq J < \left\lceil \frac{k+2}{2} \right\rceil$, then $m \geq 2$ and $G_{r,l}$ breaks up into a copy of $G_{r,(m-2)(k+2)}$ (the first $(m-2)(k+2)$ columns) and a copy of $G_{r,2(k+2)+J}$ (the final $2(k+2)+J$ columns). Color the copy of $G_{r,(m-2)(k+2)}$ as above. The copy of $G_{r,2(k+2)+J}$ breaks up further into $n-2$ disjoint copies of $G_{k+1,2(k+2)+J}$ (the first $(n-2)(k+1)$ rows) and one copy of $G_{2(k+1)+I,2(k+2)+J}$ (the final $2(k+1)+I$ rows). Each copy of $G_{k+1,2(k+2)+J}$ can be colored as in Lemma~\ref{c2}. The copy of $G_{2(k+1)+I,2(k+2)+J}$ can be colored by first coloring the initial $G_{k+1,k+2}$ block (the first $k+1$ rows and the first $k+2$ columns) by the standard coloring $\varphi$. As before, we can then extend this coloring in two different directions. Extend the coloring $\varphi$ to the remaining vertices of the first $k+1$ rows as in Lemma~\ref{c2} with the colorings $h_{s_1}(\varphi)$ and then $h_{s_2}(h_{s_1}(\varphi))$. Also, extend $\varphi$ to the remaining vertices in the first $k+2$ columns as in Lemma~\ref{r2} with the colorings $v_{t_1}(\varphi)$ and then $v_{t_2}(v_{t_1}(\varphi))$. We may then complete the coloring to the other vertices of the copy of $G_{2(k+1)+I,2(k+2)+J}$ with the colorings $v_{t_1}(h_{s_1}(\varphi))$, $v_{t_2}(v_{t_1}(h_{s_1}(\varphi)))$, $v_{t_1}(h_{s_2}(h_{s_1}(\varphi)))$, and $v_{t_2}(v_{t_1}(h_{s_2}(h_{s_1}(\varphi))))$. Again, by Lemma~\ref{commute}, these extensions agree with all other possible extensions so the coloring is valid.
\end{proof}

Therefore, we get the following result as a corollary.

\begin{lemma}
\label{mainthm}
Let $k$ be a positive integer, then \[d_k\left(G_{r,l}\right) \geq \frac{(k+1)(k+2)}{2}\] whenever
\begin{itemize}
\item $r=k+1$ and $l=k+2$;
\item $r=k+1$ and $l\geq k+2 +\left\lceil \frac{k+2}{2} \right\rceil$;
\item $r\geq k+1 +\left\lceil \frac{k+1}{2} \right\rceil$ and $l=k+2$;
\item or $r\geq k+1 +\left\lceil \frac{k+1}{2} \right\rceil$ and $l\geq k+2 +\left\lceil \frac{k+2}{2} \right\rceil$.
\end{itemize}
\end{lemma}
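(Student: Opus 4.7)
The plan is to derive Lemma \ref{mainthm} as a direct corollary of the preceding unlabeled lemma in the ``Extending to general cases'' subsection, supplemented by Lemma \ref{blockproof} to handle the degenerate base case. Given a pair $(r,l)$ satisfying one of the four bullet hypotheses, I would write $r = n(k+1) + I$ and $l = m(k+2) + J$ with the unique choices $0 \leq I < k+1$ and $0 \leq J < k+2$, and verify case by case that this $(n,I,m,J)$ falls under one of the three bullet conditions of the preceding lemma.

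Concretely, the first bullet of Lemma \ref{mainthm} ($r=k+1$, $l=k+2$) is exactly the standard block coloring $\varphi$ supplied by Lemma \ref{blockproof}, which uses $N_{k+1} = \frac{(k+1)(k+2)}{2}$ colors. For the second bullet ($r=k+1$, so $n=1$ and $I=0$, with $l \geq k+2 + \lceil (k+2)/2 \rceil$), I would split on $m$: if $m \geq 2$ then the third bullet of the preceding lemma applies with its $I=0$ alternative; if $m=1$ then necessarily $J = l - (k+2) \geq \lceil (k+2)/2 \rceil$, and Lemma \ref{c1} applies directly to the entire graph. The third bullet of Lemma \ref{mainthm} is completely symmetric, swapping the roles of rows and columns and using Lemma \ref{r1} in place of Lemma \ref{c1}.

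For the fourth bullet, I would split on whether each of $n$ and $m$ equals $1$ or is at least $2$. When $n,m \geq 2$, the first bullet of the preceding lemma applies immediately. When exactly one of them equals $1$, the corresponding inequality in the hypothesis forces either $I \geq \lceil (k+1)/2 \rceil$ (when $n=1$) or $J \geq \lceil (k+2)/2 \rceil$ (when $m=1$), so the second or third bullet of the preceding lemma applies. The remaining subcase $n=m=1$ has $r = k+1+I$ and $l = k+2+J$ with $I \geq \lceil (k+1)/2 \rceil$ and $J \geq \lceil (k+2)/2 \rceil$; this is covered by the explicit construction inside Case 2 of the preceding lemma's proof, in which the whole $G_{k+1+I,k+2+J}$ is colored by placing $\varphi$ on the top-left $G_{k+1,k+2}$ block and extending horizontally via $h_s$ with $s = k+2-J$ and vertically via $v_t$ with $t = k+1-I$, with agreement on the overlapping $G_{k+1+I-k-1,k+2+J-k-2}$ corner guaranteed by Lemma \ref{commute}.

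The main obstacle here is not any new mathematical content but rather the bookkeeping to ensure that every $(r,l)$ allowed by the four bullet hypotheses of Lemma \ref{mainthm} is indeed decomposed by one of the prescribed patterns of the preceding lemma, with particular care given to the boundary cases where $n = 1$ or $m = 1$ (so that one of the rectangular building blocks coincides with the entire graph) and to the single-block edge case $n=m=1$, which is the only situation requiring the commutativity of the $h_s$ and $v_t$ operations.
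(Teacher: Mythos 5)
Your proposal is correct and matches the paper's intent exactly: the paper states Lemma~\ref{mainthm} as a corollary of the preceding unlabeled lemma without writing out the case analysis, and your proposal supplies precisely that bookkeeping, correctly identifying that the boundary cases ($n=1$ or $m=1$ with a single building block, and the base case $G_{k+1,k+2}$) must fall back on Lemmas~\ref{blockproof}, \ref{c1}, \ref{r1}, and the single-block $h_s$/$v_t$ construction with Lemma~\ref{commute}.
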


\section{Cases when $r \leq k$}
\label{smallsection}

\begin{lemma}
\label{small}
For each $1 \leq r \leq k$, all but finitely many grid graphs $G_{r,l}$ are domatically-full. That is, \[d_k(G_{r,l}) \geq r(k+1)-\frac{r(r-1)}{2}\] for $l=2k-r+3$ and all $l \geq 2k-r+3 + \left\lceil \frac{2k-r+3}{2} \right\rceil$.
\end{lemma}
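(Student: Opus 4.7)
The plan is to handle the two cases of the statement separately. For $l = 2k-r+3$, the bound is immediate from Lemma~\ref{blockproof}: the standard block coloring $\varphi_r$ is a proper $k$-domatic coloring using exactly $N_r = r(k+1) - r(r-1)/2$ colors.

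For $l \geq L + \lceil L/2 \rceil$, where I write $L = 2k-r+3$, the approach is exactly the overlap strategy used in Lemmas~\ref{c1},~\ref{c2},~\ref{r1}, and~\ref{r2}: tile $G_{r,l}$ by $m$ copies of $G_{r,L}$, each colored by a permuted version of $\varphi_r$, with consecutive copies overlapping in a matching set of columns. The first step is purely arithmetic: pick an integer $m \geq 2$ and overlap widths $s_1, \ldots, s_{m-1} \in \{0, 1, \ldots, \lfloor L/2 \rfloor\}$ with $mL - (s_1 + \cdots + s_{m-1}) = l$. Two copies suffice for $l \in [L + \lceil L/2 \rceil, 2L]$ by taking $s_1 = 2L - l$, and a short check shows that the range attainable with $m$ copies (namely $[mL - (m-1)\lfloor L/2 \rfloor, mL]$) always overlaps the range attainable with $m + 1$ copies once $m \geq 2$, so every $l \geq L + \lceil L/2 \rceil$ is realized.

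Next I would choose the permutations greedily. Color the first tile by $\varphi_r$ itself, and for each subsequent tile that overlaps the previous in $s$ columns, pick $\sigma \in S_{N_r}$ so that the first $s$ columns of $\sigma \circ \varphi_r$ agree with the last $s$ columns of the preceding tile. Two properties of $\varphi_r$ make this always possible: by the first branch of the definition, the first $s$ columns of $\varphi_r$ receive the colors $1, 2, \ldots, rs$ each exactly once whenever $s \leq \lfloor L/2 \rfloor$ (since then $sr \leq rL/2 = N_r$), and the central symmetry $\varphi_r((j,i)) = \varphi_r((r+1-j, L+1-i))$ already established in the proof of Lemma~\ref{blockproof} forces the last $s$ columns of any tile to contain those same $rs$ colors, hence $rs$ distinct values. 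Thus prescribing $\sigma(1), \ldots, \sigma(rs)$ to match the overlap is a consistent assignment of distinct elements of $[N_r]$, and it extends freely to a bijection on the remaining indices.

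Once the consistent coloring is built, every vertex of $G_{r,l}$ lies inside at least one complete tile colored by some $\sigma \circ \varphi_r$, which is a proper $k$-domatic coloring by Lemma~\ref{blockproof}, so every vertex is within distance $k$ of every color and $d_k(G_{r,l}) \geq N_r$. The step I expect to be most fiddly is the bookkeeping of the tiling ranges for small $L$ and the parity of $L$; the coloring and matching argument itself is a direct adaptation of the idea used in Lemmas~\ref{c1} and~\ref{r1}, since we only need horizontal overlaps here and never need to combine them with vertical ones as we had to in the $r = k+1$ extensions.
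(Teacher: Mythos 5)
Your proposal is correct and follows essentially the same route as the paper: tile $G_{r,l}$ with copies of $G_{r,2k-r+3}$ overlapping in at most $\left\lfloor \frac{2k-r+3}{2} \right\rfloor$ columns, use the fact that such an overlap region contains $rs$ distinct colors (via the first branch of $\varphi_r$ and its central symmetry) to extend the partial coloring to a permuted copy of $\varphi_r$, and conclude from Lemma~\ref{blockproof}. The only cosmetic difference is that you pick the matching permutation greedily and cover all admissible $l$ by a range-overlap argument, whereas the paper writes $l = m(2k-r+3)+J$ and uses the explicit operator $h_s$; both amount to the same construction.
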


\begin{proof}
By Lemma~\ref{blockproof} we know that this is already true when $l=2k-r+3$ by the standard block coloring $\varphi_r$. We can extend this coloring to cover all cases when $l \geq 2k-r+3 + \left\lceil \frac{2k-r+3}{2} \right\rceil$ in exactly the same way as we extended the coloring $\varphi$ ``horizontally" in Lemmas~\ref{c1} and~\ref{c2}. That is, let $l = m(2k-r+3)+J$ for some integers $m \geq 1$ and $0 \leq J \leq 2k-r+2$. If $J=0$, then we simply use the $\varphi_r$ coloring $m$ times. If $\left\lceil \frac{2k-r+3}{2} \right\rceil \leq J \leq 2k-r+2$, then we use the $\varphi_r$ coloring $m$ times on the first $m(2k-r+2)$ columns, then noting that the final $2k-r+2$ columns are already partially colored in the first $s=2k-r+2-J$ columns. Since $\left\lceil \frac{2k-r+3}{2} \right\rceil \leq J$, then $s \leq \left\lfloor \frac{2k-r+3}{2} \right\rfloor$. Therefore, none of the colors have been repeated, and so this partial coloring can be extended to a coloring isomorphic to $\varphi_r$.

Similarly, if $1 \leq J \leq \left\lceil \frac{2k-r+3}{2} \right\rceil -1$ and if $m \geq 2$, then we can use $\varphi_r$ $m-1$ times on the first $(m-1)(2k-r+3)$ columns, then extend the coloring given to columns $(m-1)(2k-r+3)-\left\lfloor \frac{2k-r+3}{2} \right\rfloor + 1$ through $(m-1)(2k-r+3)$ to a coloring on columns $(m-1)(2k-r+3)+1$ through $(m-1)(2k-r+3) + \left\lceil \frac{2k-r+3}{2} \right\rceil$ as above ($h_{s_1}(\varphi_r)$ for $s_1=\left\lfloor \frac{2k-r+3}{2} \right\rfloor$). Then extend the coloring again to the final $J+\left\lfloor \frac{2k-r+3}{2} \right\rfloor$ columns in the same manner as above since $ \left\lceil \frac{2k-r+3}{2} \right\rceil  \leq J+\left\lfloor \frac{2k-r+3}{2} \right\rfloor 2k-r+2$.
\end{proof}

\section{All cases when $k=3$}
\label{k=3}

Between Chang \cite{chang94} and Kiser \cite{masters} we know the $k$-domatic numbers for every two-dimensional grid $G_{r,l}$ when $k=1,2$. In this section we determine $d_3(G_{r,l})$ for every two-dimensional grid. First, if $r=1$, then we know that $d_3(G_{1,l}) = \min{\{4,l\}}$ is trivial.

\begin{figure}
\begin{subfigure}[b]{0.33\linewidth}
          \centering

          \resizebox{\linewidth}{!}{
\usetikzlibrary{positioning}
\tikzset{main node/.style={circle,fill=blue!20,draw,minimum size=1.5em,inner sep=0pt},
            }
\begin{tikzpicture}
	\node[main node] (1) {1};
	\node[main node] (2) [below= 1 cm of 1] {2};
	\node[main node] (3) [right= 1 cm of 1] {3};
	\node[main node] (4) [below= 1 cm of 3] {4};
	\node[main node] (5) [right = 1 cm of 3] {5};
	\node[main node] (6) [below= 1 cm of 5] {6};
	\node[main node] (7) [right = 1 cm of 5] {1};
	\node[main node] (8) [below= 1 cm of 7] {2};
	\path[draw,thick]
    	(1) edge node {} (2)
	(1) edge node {} (3)
	(2) edge node {} (4)
	(3) edge node {} (4)
	(3) edge node {} (5)
	(5) edge node {} (6)
	(4) edge node {} (6)
	(6) edge node {} (8)
	(7) edge node {} (5)
	(8) edge node {} (7);
\end{tikzpicture}
}
\caption{$d_3(G_{2,4}) = 6$}
\end{subfigure}
\begin{subfigure}[b]{0.41\linewidth}
          \centering

          \resizebox{\linewidth}{!}{
\usetikzlibrary{positioning}
\tikzset{main node/.style={circle,fill=blue!20,draw,minimum size=1.5em,inner sep=0pt},
            }
\begin{tikzpicture}
	\node[main node] (1) {1};
	\node[main node] (2) [below= 1 cm of 1] {2};
	\node[main node] (3) [right= 1 cm of 1] {3};
	\node[main node] (4) [below= 1 cm of 3] {4};
	\node[main node] (5) [right = 1 cm of 3] {5};
	\node[main node] (6) [below= 1 cm of 5] {6};
	\node[main node] (7) [right = 1 cm of 5] {1};
	\node[main node] (8) [below= 1 cm of 7] {2};
	\node[main node] (9) [right = 1 cm of 7] {3};
	\node[main node] (10) [below= 1 cm of 9] {4};
	\path[draw,thick]
    	(1) edge node {} (2)
	(1) edge node {} (3)
	(2) edge node {} (4)
	(3) edge node {} (4)
	(3) edge node {} (5)
	(5) edge node {} (6)
	(4) edge node {} (6)
	(6) edge node {} (8)
	(7) edge node {} (5)
	(8) edge node {} (7)
	(7) edge node {} (9)
	(8) edge node {} (10)
	(9) edge node {} (10);
	\end{tikzpicture}
	}
\caption{$d_3(G_{2,5}) = 6$}
\end{subfigure}

\begin{subfigure}[b]{0.4\linewidth}
          \centering

          \resizebox{\linewidth}{!}{
\usetikzlibrary{positioning}
\tikzset{main node/.style={circle,fill=blue!20,draw,minimum size=1.5em,inner sep=0pt},
            }
\begin{tikzpicture}
	\node[main node] (1) {1};
	\node[main node] (2) [below= 1 cm of 1] {2};
	\node[main node] (3) [right= 1 cm of 1] {3};
	\node[main node] (4) [below= 1 cm of 3] {4};
	\node[main node] (5) [right = 1 cm of 3] {5};
	\node[main node] (6) [below= 1 cm of 5] {6};
	\node[main node] (7) [right = 1 cm of 5] {1};
	\node[main node] (8) [below= 1 cm of 7] {2};
	\node[main node] (9) [right = 1 cm of 7] {3};
	\node[main node] (10) [below= 1 cm of 9] {4};
	\node[main node] (11) [right = 1 cm of 9] {5};
	\node[main node] (12) [below= 1 cm of 11] {6};
	\path[draw,thick]
    	(1) edge node {} (2)
	(1) edge node {} (3)
	(2) edge node {} (4)
	(3) edge node {} (4)
	(3) edge node {} (5)
	(5) edge node {} (6)
	(4) edge node {} (6)
	(6) edge node {} (8)
	(7) edge node {} (5)
	(8) edge node {} (7)
	(7) edge node {} (9)
	(8) edge node {} (10)
	(9) edge node {} (10)
	(9) edge node {} (11)
	(10) edge node {} (12)
	(11) edge node {} (12);
	\end{tikzpicture}
	}
\caption{$d_3(G_{2,6}) = 6$}
\end{subfigure}
\begin{subfigure}[b]{0.53\linewidth}
          \centering

          \resizebox{\linewidth}{!}{
\usetikzlibrary{positioning}
\tikzset{main node/.style={circle,fill=blue!20,draw,minimum size=1.5em,inner sep=0pt},
            }
\begin{tikzpicture}
	\node[main node] (1) {1};
	\node[main node] (2) [below= 1 cm of 1] {2};
	\node[main node] (3) [right= 1 cm of 1] {3};
	\node[main node] (4) [below= 1 cm of 3] {4};
	\node[main node] (5) [right = 1 cm of 3] {5};
	\node[main node] (6) [below= 1 cm of 5] {6};
	\node[main node] (7) [right = 1 cm of 5] {7};
	\node[main node] (8) [below= 1 cm of 7] {7};
	\node[main node] (9) [right = 1 cm of 7] {7};
	\node[main node] (10) [below= 1 cm of 9] {7};
	\node[main node] (11) [right = 1 cm of 9] {1};
	\node[main node] (12) [below= 1 cm of 11] {2};
	\node[main node] (13) [right= 1 cm of 11] {3};
	\node[main node] (14) [below= 1 cm of 13] {4};
	\node[main node] (15) [right= 1 cm of 13] {5};
	\node[main node] (16) [below= 1 cm of 15] {6};
	\path[draw,thick]
    	(1) edge node {} (2)
	(1) edge node {} (3)
	(2) edge node {} (4)
	(3) edge node {} (4)
	(3) edge node {} (5)
	(5) edge node {} (6)
	(4) edge node {} (6)
	(6) edge node {} (8)
	(7) edge node {} (5)
	(8) edge node {} (7)
	(7) edge node {} (9)
	(8) edge node {} (10)
	(9) edge node {} (10)
	(9) edge node {} (11)
	(10) edge node {} (12)
	(11) edge node {} (12)
	(11) edge node {} (13)
	(12) edge node {} (14)
	(13) edge node {} (14)
	(13) edge node {} (15)
	(14) edge node {} (16)
	(15) edge node {} (16);
	\end{tikzpicture}
	}
\caption{$d_3(G_{2,8}) = 7$}
\end{subfigure}
\begin{subfigure}[b]{0.65\linewidth}
          \centering

          \resizebox{\linewidth}{!}{
\usetikzlibrary{positioning}
\tikzset{main node/.style={circle,fill=blue!20,draw,minimum size=1.5em,inner sep=0pt},
            }
\begin{tikzpicture}
	\node[main node] (1) {1};
	\node[main node] (2) [below= 1 cm of 1] {2};
	\node[main node] (3) [right= 1 cm of 1] {3};
	\node[main node] (4) [below= 1 cm of 3] {4};
	\node[main node] (5) [right = 1 cm of 3] {5};
	\node[main node] (6) [below= 1 cm of 5] {6};
	\node[main node] (7) [right = 1 cm of 5] {7};
	\node[main node] (8) [below= 1 cm of 7] {7};
	\node[main node] (9) [right = 1 cm of 7] {1};
	\node[main node] (10) [below= 1 cm of 9] {2};
	\node[main node] (11) [right = 1 cm of 9] {7};
	\node[main node] (12) [below= 1 cm of 11] {7};
	\node[main node] (13) [right= 1 cm of 11] {6};
	\node[main node] (14) [below= 1 cm of 13] {5};
	\node[main node] (15) [right= 1 cm of 13] {4};
	\node[main node] (16) [below= 1 cm of 15] {3};
	\node[main node] (17) [right= 1 cm of 15] {2};
	\node[main node] (18) [below= 1 cm of 17] {1};
	\path[draw,thick]
    	(1) edge node {} (2)
	(1) edge node {} (3)
	(2) edge node {} (4)
	(3) edge node {} (4)
	(3) edge node {} (5)
	(5) edge node {} (6)
	(4) edge node {} (6)
	(6) edge node {} (8)
	(7) edge node {} (5)
	(8) edge node {} (7)
	(7) edge node {} (9)
	(8) edge node {} (10)
	(9) edge node {} (10)
	(9) edge node {} (11)
	(10) edge node {} (12)
	(11) edge node {} (12)
	(11) edge node {} (13)
	(12) edge node {} (14)
	(13) edge node {} (14)
	(13) edge node {} (15)
	(14) edge node {} (16)
	(15) edge node {} (16)
	(15) edge node {} (17)
	(16) edge node {} (18)
	(17) edge node {} (18);
	\end{tikzpicture}
	}
\caption{$d_3(G_{2,9}) = 7$}
\end{subfigure}
\begin{subfigure}[b]{0.7\linewidth}
          \centering

          \resizebox{\linewidth}{!}{
\usetikzlibrary{positioning}
\tikzset{main node/.style={circle,fill=blue!20,draw,minimum size=1.5em,inner sep=0pt},
            }
\begin{tikzpicture}
	\node[main node] (1) {1};
	\node[main node] (2) [below= 1 cm of 1] {2};
	\node[main node] (3) [right= 1 cm of 1] {3};
	\node[main node] (4) [below= 1 cm of 3] {4};
	\node[main node] (5) [right = 1 cm of 3] {5};
	\node[main node] (6) [below= 1 cm of 5] {6};
	\node[main node] (7) [right = 1 cm of 5] {7};
	\node[main node] (8) [below= 1 cm of 7] {7};
	\node[main node] (9) [right = 1 cm of 7] {1};
	\node[main node] (10) [below= 1 cm of 9] {2};
	\node[main node] (11) [right = 1 cm of 9] {3};
	\node[main node] (12) [below= 1 cm of 11] {4};
	\node[main node] (13) [right= 1 cm of 11] {7};
	\node[main node] (14) [below= 1 cm of 13] {7};
	\node[main node] (15) [right= 1 cm of 13] {6};
	\node[main node] (16) [below= 1 cm of 15] {5};
	\node[main node] (17) [right= 1 cm of 15] {4};
	\node[main node] (18) [below= 1 cm of 17] {3};
	\node[main node] (19) [right= 1 cm of 17] {2};
	\node[main node] (20) [below= 1 cm of 19] {1};
	\path[draw,thick]
    	(1) edge node {} (2)
	(1) edge node {} (3)
	(2) edge node {} (4)
	(3) edge node {} (4)
	(3) edge node {} (5)
	(5) edge node {} (6)
	(4) edge node {} (6)
	(6) edge node {} (8)
	(7) edge node {} (5)
	(8) edge node {} (7)
	(7) edge node {} (9)
	(8) edge node {} (10)
	(9) edge node {} (10)
	(9) edge node {} (11)
	(10) edge node {} (12)
	(11) edge node {} (12)
	(11) edge node {} (13)
	(12) edge node {} (14)
	(13) edge node {} (14)
	(13) edge node {} (15)
	(14) edge node {} (16)
	(15) edge node {} (16)
	(15) edge node {} (17)
	(16) edge node {} (18)
	(17) edge node {} (18)
	(17) edge node {} (19)
	(18) edge node {} (20)
	(19) edge node {} (20);
	\end{tikzpicture}
	}
\caption{$d_3(G_{2,10}) = 7$}
\end{subfigure}
\caption{}
\label{constructions2}
\end{figure}

When $r=2$, we first see that $d_3(G_{2,2})=4$ and $d_3(G_{2,3})=6$ are also trivial. For $l=4,5,6$ we find that $d_3(G_{2,l})=6$, giving us our first cases where the grid graph is not $3$-domatically full. To see that $d_3(G_{2,4}) \leq 6$ we note that if we try to color its eight vertices with seven colors, then six of the color classes will contain one vertex each, and so $G_{2,4}$ must have at least six $3$-dominating vertices, when it only has four. Similarly, if we attempt to color the ten vertices of $G_{2,5}$ with seven colors, then at least four color classes will each contain one vertex. So $G_{2,5}$ would need at least four $3$-dominating vertices when it only has two, a contradiction. The same reasoning applies to $G_{2,6}$ when seven colors distributed over twelve vertices means the graph must have at least two $3$-dominating vertices when it has none. To show the lower bounds, $d_3(G_{2,l}) \geq 6$ for $l=4,5,6$, we give constructions in Figure~\ref{constructions2}. By Lemma~\ref{small}, we know that $d_3(G_{2,7})=7$ and $d_3(G_{2,l})=7$ for all $l \geq 11$. So we only need to consider $l=8,9,10$ which all end up being domatically full, $d_3(G_{2,l}=7$, by the constructions shown in Figure~\ref{constructions2}.

When $r=3$, then we know by Lemma~\ref{small} that $d_3(G_{3,l}) = 9$ when $l=6$ and for all $l \geq 9$ so we need only consider the cases where $l=3,4,5,7,8$. We find that $d_3(G_{3,7})=d_3(G_{3,8})=9$ which is demonstrated by constructions in Figure~\ref{constructions3}. For the other cases we get that $d_3(G_{3,3})=d_3(G_{3,4})=7$ and that $d_3(G_{3,5})=8$. To demonstrate the lower bounds, the constructions are also shown in Figure~\ref{constructions3}. To show the upper bounds, we again use the same pigeonhole argument as in the cases where $r=2$. First, $d_3(G_{3,3}) \leq 7$ since if we try to color the nine vertices with eight colors, then seven vertices must be $3$-dominating when only five are. Similarly, $d_3(G_{3,4}) \leq 7$ since distributing eight colors to twelve vertices implies that $G_{3,4}$ must have at least four $3$-dominating vertices when it only has two, and $d_3(G_{3,5}) \leq 8$ because nine colors distributed to fifteen vertices implies that three vertices of $G_{3,5}$ must be $3$-dominating when only one is.

\begin{figure}
\begin{subfigure}[b]{0.23\linewidth}
          \centering

          \resizebox{\linewidth}{!}{
\usetikzlibrary{positioning}
\tikzset{state/.style={circle,fill=blue!20,draw,minimum size=1.5 em,inner sep=0pt},
            }
\begin{tikzpicture}
    \node[state] (1) {6};
    \node[state] (2) [below=1 cm of 1] {1};
    \node[state] (3) [below =1 cm of 2] {7};
    \node[state] (4) [right =1 cm of 1] {4};
    \node[state] (5) [below =1 cm of 4] {5};
    \node[state] (6) [below=1 cm of 5 ] {2};
    \node[state] (7) [right=1 cm of 4] {7};
    \node[state] (8) [below =1 cm of 7] {3};
    \node[state] (9) [below =1 cm of 8] {6};
    \path[draw,thick]
    (1) edge node {} (2)
    (2) edge node {} (3)
    (4) edge node {} (5)
    (5) edge node {} (6)
    (7) edge node {} (8)
    (8) edge node {} (9)
    (1) edge node {} (4)
    (4) edge node {} (7)
    (2) edge node {} (5)
    (5) edge node {} (8)
    (3) edge node {} (6)
    (6) edge node {} (9);
\end{tikzpicture}
}
\caption{$d_3(G_{3,3}) = 7$}
\end{subfigure}
\begin{subfigure}[b]{0.32\linewidth}
          \centering

          \resizebox{\linewidth}{!}{
\usetikzlibrary{positioning}
\tikzset{state/.style={circle,fill=blue!20,draw,minimum size=1.5 em,inner sep=0pt},
            }
\begin{tikzpicture}
    \node[state] (1) {6};
    \node[state] (2) [below=1 cm of 1] {1};
    \node[state] (3) [below =1 cm of 2] {7};
    \node[state] (4) [right =1 cm of 1] {4};
    \node[state] (5) [below =1 cm of 4] {5};
    \node[state] (6) [below=1 cm of 5 ] {2};
    \node[state] (7) [right=1 cm of 4] {7};
    \node[state] (8) [below =1 cm of 7] {3};
    \node[state] (9) [below =1 cm of 8] {6};
    \node[state] (10) [right =1 cm of 7] {1};
    \node[state] (11) [below =1 cm of 10] {2};
    \node[state] (12) [below =1 cm of 11] {4};
    \path[draw,thick]
    (1) edge node {} (2)
    (2) edge node {} (3)
    (4) edge node {} (5)
    (5) edge node {} (6)
    (7) edge node {} (8)
    (8) edge node {} (9)
    (1) edge node {} (4)
    (4) edge node {} (7)
    (2) edge node {} (5)
    (5) edge node {} (8)
    (3) edge node {} (6)
    (6) edge node {} (9)
    (7) edge node {} (10)
    (8) edge node {} (11)
    (9) edge node {} (12)
    (10) edge node {} (11)
    (11) edge node {} (12);
\end{tikzpicture}
}
\caption{$d_3{(G_{3,4})}=7$}
\end{subfigure}
\begin{subfigure}[b]{0.4\linewidth}
          \centering

          \resizebox{\linewidth}{!}{
\usetikzlibrary{positioning}
\tikzset{state/.style={circle,fill=blue!20,draw,minimum size=1.5 em,inner sep=0pt},
            }
\begin{tikzpicture}
    \node[state] (1) {4};
    \node[state] (2) [below=1 cm of 1] {7};
    \node[state] (3) [below =1 cm of 2] {3};
    \node[state] (4) [right =1 cm of 1] {8};
    \node[state] (5) [below =1 cm of 4] {2};
    \node[state] (6) [below=1 cm of 5 ] {6};
    \node[state] (7) [right=1 cm of 4] {6};
    \node[state] (8) [below =1 cm of 7] {1};
    \node[state] (9) [below =1 cm of 8] {5};
    \node[state] (10) [right =1 cm of 7] {5};
    \node[state] (11) [below =1 cm of 10] {3};
    \node[state] (12) [below =1 cm of 11] {7};
    \node[state] (13) [right =1 cm of 10] {2};
    \node[state] (14) [below =1 cm of 13] {8};
    \node[state] (15) [below =1 cm of 14] {4};
    \path[draw,thick]
    (1) edge node {} (2)
    (2) edge node {} (3)
    (4) edge node {} (5)
    (5) edge node {} (6)
    (7) edge node {} (8)
    (8) edge node {} (9)
    (10) edge node {} (11)
    (11) edge node {} (12)
    (13) edge node {} (14)
    (14) edge node {} (15)
    (1) edge node {} (4)
    (4) edge node {} (7)
    (7) edge node {} (10)
    (10) edge node {} (13)
    (2) edge node {} (5)
    (5) edge node {} (8)
    (8) edge node {} (11)
    (11) edge node {} (14)
    (3) edge node {} (6)
    (6) edge node {} (9)
    (9) edge node {} (12)
    (12) edge node {} (15);
\end{tikzpicture}
}
\caption{$d_3(G_{3,5}) = 8$}
\end{subfigure}
\begin{subfigure}[b]{0.4\linewidth}
          \centering

          \resizebox{\linewidth}{!}{
\usetikzlibrary{positioning}
\tikzset{main node/.style={circle,fill=blue!20,draw,minimum size=1.5 em,inner sep=0pt},
            }
\begin{tikzpicture}
    \node[main node] (1) {1};
    \node[main node] (2) [below = 1cm of 1] {2};
    \node[main node] (3) [below = 1cm of 2] {3};
    \node[main node] (4) [right =1cm of 1] {4};
    \node[main node] (5) [below=1cm of 4] {5};
    \node[main node] (6) [below = 1cm of 5] {6};
    \node[main node] (7) [right = 1cm of 4] {7};
    \node[main node] (8) [below = 1cm of 7] {8};
    \node[main node] (9) [below = 1cm of 8] {9};
    \node[main node] (10) [right = 1cm of 7] {9};
    \node[main node] (11) [below = 1cm of 10] {8};
    \node[main node] (12) [below = 1cm of 11] {7};
    \node[main node] (13) [right = 1cm of 10] {6};
    \node[main node] (14) [below = 1cm of 13] {5};
    \node[main node] (15) [below = 1cm of 14] {4};
    \node[main node] (16) [right = 1cm of 13] {3};
    \node[main node] (17) [below = 1cm of 16] {2};
    \node[main node] (18) [below = 1cm of 17] {1};
    \path[draw,thick]
    (1) edge node {} (2)
    (2) edge node {} (3)
    (1) edge node {} (4)
    (2) edge node {} (5)
    (3) edge node {} (6)
    (4) edge node {} (5)
    (5) edge node {} (6)
    (7) edge node {} (4)
    (8) edge node {} (5)
    (9) edge node {} (6)
    (7) edge node {} (8)
    (8) edge node {} (9)
    (10) edge node {} (7)
    (11) edge node {} (8)
    (12) edge node {} (9)
    (10) edge node {} (11)
    (11) edge node {} (12)
    (13) edge node {} (10)
    (14) edge node {} (11)
    (15) edge node {} (12)
    (13) edge node {} (14)
    (14) edge node {} (15)
    (16) edge node {} (13)
    (17) edge node {} (14)
    (18) edge node {} (15)
    (16) edge node {} (17)
    (17) edge node {} (18);
\end{tikzpicture}
}
\caption{$d_3(G_{3,6}) = 9$}
\end{subfigure}
\begin{subfigure}[b]{0.47\linewidth}
          \centering

          \resizebox{\linewidth}{!}{
\usetikzlibrary{positioning}
\tikzset{main node/.style={circle,fill=blue!20,draw,minimum size=1.5 em,inner sep=0pt},
            }
            \begin{tikzpicture}
    \node[main node] (1) {1};
    \node[main node] (2) [below = 1cm of 1] {2};
    \node[main node] (3) [below = 1cm of 2] {3};
    \node[main node] (4) [right =1cm of 1] {4};
    \node[main node] (5) [below=1cm of 4] {5};
    \node[main node] (6) [below = 1cm of 5] {6};
    \node[main node] (7) [right = 1cm of 4] {7};
    \node[main node] (8) [below = 1cm of 7] {8};
    \node[main node] (9) [below = 1cm of 8] {9};
    \node[main node] (10) [right = 1cm of 7] {9};
    \node[main node] (11) [below = 1cm of 10] {3};
    \node[main node] (12) [below = 1cm of 11] {7};
    \node[main node] (13) [right = 1cm of 10] {7};
    \node[main node] (14) [below = 1cm of 13] {1};
    \node[main node] (15) [below = 1cm of 14] {9};
    \node[main node] (16) [right = 1cm of 13] {6};
    \node[main node] (17) [below = 1cm of 16] {2};
    \node[main node] (18) [below = 1cm of 17] {4};
    \node[main node] (19) [right = 1cm of 16] {3};
    \node[main node] (20) [below = 1cm of 19] {5};
    \node[main node] (21) [below = 1cm of 20] {8};
    \path[draw,thick]
    (1) edge node {} (2)
    (2) edge node {} (3)
    (1) edge node {} (4)
    (2) edge node {} (5)
    (3) edge node {} (6)
    (4) edge node {} (5)
    (5) edge node {} (6)
    (7) edge node {} (4)
    (8) edge node {} (5)
    (9) edge node {} (6)
    (7) edge node {} (8)
    (8) edge node {} (9)
    (10) edge node {} (7)
    (11) edge node {} (8)
    (12) edge node {} (9)
    (10) edge node {} (11)
    (11) edge node {} (12)
    (13) edge node {} (10)
    (14) edge node {} (11)
    (15) edge node {} (12)
    (13) edge node {} (14)
    (14) edge node {} (15)
    (16) edge node {} (13)
    (17) edge node {} (14)
    (18) edge node {} (15)
    (16) edge node {} (17)
    (17) edge node {} (18)
    (16) edge node {} (19)
    (17) edge node {} (20)
    (18) edge node {} (21)
    (19) edge node {} (20)
    (20) edge node {} (21);
\end{tikzpicture}
}
\caption{$d_3(G_{3,7}) = 9$}
\end{subfigure}
\caption{}
\label{constructions3}
\end{figure}

When $r=4$ we know by Theorem~\ref{mainthm} that $d_3(G_{4,l})=10$ when $l=5$ and for all $l \geq 8$. So we need only consider the cases for $l=4,6,7$. We find that $d_3(G_{4,7})=10$ as well with the construction given in Figure~\ref{construction47}. The other two cases are not $3$-domatically full. We get that $d_3(G_{4,4})=8$ with the lower bound demonstrated by construction in Figure~\ref{construction44} and the lower bound again given by the pigeonhole argument. If we distribute nine colors to the sixteen vertices, then two color classes have one vertex each, which means that these two vertices must be $3$-dominating, a contradiction since $G_{4,4}$ has no $3$-dominating vertices.

\begin{figure}
\begin{subfigure}[b]{0.46\linewidth}
          \centering

          \resizebox{\linewidth}{!}{
\usetikzlibrary{positioning}
\tikzset{state/.style={circle,fill=red!20,draw,minimum size=1.5 em,inner sep=0pt},
            }
\tikzset{state2/.style={circle,fill=green!20,draw,minimum size=1.5 em,inner sep=0pt},
            }
\tikzset{state3/.style={circle,fill=blue!20,draw,minimum size=1.5 em,inner sep=0pt},
            }
\begin{tikzpicture}
   \node[state] (1) {S};
    \node[state] (2) [below=1 cm of 1] {S};
    \node[state] (3) [below =1 cm of 2] {S};
    \node[state] (4) [below =1 cm of 3] {S};
    \node[state] (5) [right =1 cm of 1] {S};
    \node[state] (6) [below=1 cm of 5 ] {S};
    \node[state] (7) [below=1 cm of 6] {S};
    \node[state3] (8) [below =1 cm of 7] {};
    \node[state] (9) [right =1 cm of 5] {S};
    \node[state] (10) [below =1 cm of 9] {S};
    \node[state3] (11) [below =1 cm of 10] {};
    \node[state2] (12) [below =1 cm of 11] {T};
    \node[state] (13) [right =1 cm of 9] {S};
    \node[state3] (14) [below =1 cm of 13] {};
    \node[state2] (15) [below =1 cm of 14] {T};
    \node[state2] (16) [below =1 cm of 15] {T};
    \node[state3] (17) [right =1 cm of 13] {};
    \node[state2] (18) [below =1 cm of 17] {T};
    \node[state2] (19) [below =1 cm of 18] {T};
    \node[state2] (20) [below =1 cm of 19] {T};
    \node[state2] (21) [right =1 cm of 17] {T};
    \node[state2] (22) [below =1 cm of 21] {T};
    \node[state2] (23) [below =1 cm of 22] {T};
    \node[state2] (24) [below =1 cm of 23] {T};
    \path[draw,thick]
    (1) edge node {} (2)
    (2) edge node {} (3)
    (3) edge node {} (4)
    (5) edge node {} (6)
    (6) edge node {} (7)
    (7) edge node {} (8)
    (9) edge node {} (10)
    (10) edge node {} (11)
    (11) edge node {} (12)
    (13) edge node {} (14)
    (14) edge node {} (15)
    (15) edge node {} (16)
    (17) edge node {} (18)
    (18) edge node {} (19)
    (19) edge node {} (20)
    (21) edge node {} (22)
    (22) edge node {} (23)
    (23) edge node {} (24)
    (1) edge node {} (5)
    (5) edge node {} (9)
    (9) edge node {} (13)
    (13) edge node {} (17)
    (17) edge node {} (21)
    (2) edge node {} (6)
    (6) edge node {} (10)
    (10) edge node {} (14)
    (14) edge node {} (18)
    (18) edge node {} (22)
    (3) edge node {} (7)
    (7) edge node {} (11)   
    (11) edge node {} (15)
    (15) edge node {} (19)
    (19) edge node {} (23)
    (4) edge node {} (8)
    (8) edge node {} (12)
    (12) edge node {} (16)
    (16) edge node {} (20)
    (20) edge node {} (24);
\end{tikzpicture}
}
\caption{}
\label{helpful1}
\end{subfigure}
\begin{subfigure}[b]{0.46\linewidth}
          \centering

          \resizebox{\linewidth}{!}{
\usetikzlibrary{positioning}
\tikzset{state/.style={circle,fill=red!20,draw,minimum size=1.5 em,inner sep=0pt},
            }
\tikzset{state2/.style={circle,fill=green!20,draw,minimum size=1.5 em,inner sep=0pt},
            }
\tikzset{state3/.style={circle,fill=blue!20,draw,minimum size=1.5 em,inner sep=0pt},
            }
\begin{tikzpicture}
    \node[state] (1) {1};
    \node[state] (2) [below=1 cm of 1] {2};
    \node[state] (3) [below =1 cm of 2] {3};
    \node[state] (4) [below =1 cm of 3] {4};
    \node[state] (5) [right =1 cm of 1] {5};
    \node[state] (6) [below=1 cm of 5 ] {6};
    \node[state] (7) [below=1 cm of 6] {7};
    \node[state3] (8) [below =1 cm of 7] {};
    \node[state] (9) [right =1 cm of 5] {8};
    \node[state] (10) [below =1 cm of 9] {9};
    \node[state3] (11) [below =1 cm of 10] {};
    \node[state2] (12) [below =1 cm of 11] {T};
    \node[state] (13) [right =1 cm of 9] {10};
    \node[state3] (14) [below =1 cm of 13] {};
    \node[state2] (15) [below =1 cm of 14] {T};
    \node[state2] (16) [below =1 cm of 15] {T};
    \node[state3] (17) [right =1 cm of 13] {};
    \node[state2] (18) [below =1 cm of 17] {T};
    \node[state2] (19) [below =1 cm of 18] {T};
    \node[state2] (20) [below =1 cm of 19] {T};
    \node[state2] (21) [right =1 cm of 17] {T};
    \node[state2] (22) [below =1 cm of 21] {T};
    \node[state2] (23) [below =1 cm of 22] {T};
    \node[state2] (24) [below =1 cm of 23] {T};
    \path[draw,thick]
    (1) edge node {} (2)
    (2) edge node {} (3)
    (3) edge node {} (4)
    (5) edge node {} (6)
    (6) edge node {} (7)
    (7) edge node {} (8)
    (9) edge node {} (10)
    (10) edge node {} (11)
    (11) edge node {} (12)
    (13) edge node {} (14)
    (14) edge node {} (15)
    (15) edge node {} (16)
    (17) edge node {} (18)
    (18) edge node {} (19)
    (19) edge node {} (20)
    (21) edge node {} (22)
    (22) edge node {} (23)
    (23) edge node {} (24)
    (1) edge node {} (5)
    (5) edge node {} (9)
    (9) edge node {} (13)
    (13) edge node {} (17)
    (17) edge node {} (21)
    (2) edge node {} (6)
    (6) edge node {} (10)
    (10) edge node {} (14)
    (14) edge node {} (18)
    (18) edge node {} (22)
    (3) edge node {} (7)
    (7) edge node {} (11)   
    (11) edge node {} (15)
    (15) edge node {} (19)
    (19) edge node {} (23)
    (4) edge node {} (8)
    (8) edge node {} (12)
    (12) edge node {} (16)
    (16) edge node {} (20)
    (20) edge node {} (24);
\end{tikzpicture}
}
\caption{}
\label{helpful2}
\end{subfigure}
\caption{}
\end{figure}

For $l=6$ we find that $d_3(G_{4,6})=9$. Again, the lower bound is given by an explicit coloring in Figure~\ref{construction46}. The upper bound is a slightly more complicated version of the pigeonhole arguments given above. Suppose we try to give a proper $3$-domatic coloring of the 24 vertices of $G_{4,6}$ with ten colors. Then consider the sets \[S=\{x \in V(G_{4,6}) : d(x,(1,1)) \leq 3\},\] and \[T=\{x \in V(G_{4,6}) : d(x,(4,6)) \leq 3\}.\] Since $S \cap T = \emptyset$ and $|S|=|T|=10$ (see Figure~\ref{helpful1}), then it follows that each color must appear at least twice, once on a vertex in $S$ and once on a vertex in $T$. So at most four colors can appear more than twice. Without loss of generality, we may assume the vertices of $S$ are colored as in Figure~\ref{helpful2}. Note that the color $1$ on $(1,1)$ does not reach the vertices $(4,2)$ or $(1,6)$. Since \[d((4,2),(1,5)) = 7,\] then it is not possible to use the color $1$ only once more. So it must be used at least three times. Similarly, color $4$ does not cover $(1,2)$ or $(4,6)$ which are distance $7$ away from each other, color $5$ does not cover $(4,1)$ and $(1,6)$ which are at distance $8$, color $8$ does not cover $(4,1)$ or $(2,6)$ which are at distance $7$, and color $10$ does not cover $(2,1)$ or $(4,6)$ which are at distance $7$. Hence, there are at least five color classes which require three vertices each, a contradiction.

\begin{figure}
\begin{subfigure}[b]{0.33\linewidth}
          \centering

          \resizebox{\linewidth}{!}{
\usetikzlibrary{positioning}
\tikzset{state/.style={circle,fill=blue!20,draw,minimum size=1.5 em,inner sep=0pt},
            }
\begin{tikzpicture}
    \node[state] (1) {1};
    \node[state] (2) [below=1 cm of 1] {2};
    \node[state] (3) [below =1 cm of 2] {3};
    \node[state] (4) [below =1 cm of 3] {4};
    \node[state] (5) [right =1 cm of 1] {5};
    \node[state] (6) [below =1 cm of 5 ] {7};
    \node[state] (7) [below =1 cm of 6] {8};
    \node[state] (8) [below =1 cm of 7] {6};
    \node[state] (9) [right =1 cm of 5] {6};
    \node[state] (10) [below =1 cm of 9] {8};
    \node[state] (11) [below =1 cm of 10] {7};
    \node[state] (12) [below =1 cm of 11] {5};
    \node[state] (13) [right =1 cm of 9] {4};
    \node[state] (14) [below =1 cm of 13] {3};
    \node[state] (15) [below =1 cm of 14] {2};
    \node[state] (16) [below =1 cm of 15] {1};
    \path[draw,thick]
    (1) edge node {} (2)
    (2) edge node {} (3)
    (3) edge node {} (4)
    (1) edge node {} (5)
    (2) edge node {} (6)
    (3) edge node {} (7)
    (4) edge node {} (8)
    (5) edge node {} (6)
    (6) edge node {} (7)
    (7) edge node {} (8)
    (5) edge node {} (9)
    (6) edge node {} (10)
    (7) edge node {} (11)
    (8) edge node {} (12)
    (9) edge node {} (10)
    (10) edge node {} (11)
    (11) edge node {} (12)
    (9) edge node {} (13)
    (10) edge node {} (14)
    (11) edge node {} (15)
    (12) edge node {} (16)
    (13) edge node {} (14)
    (14) edge node {} (15)
    (15) edge node {} (16);
\end{tikzpicture}
}
\caption{$d_3(G_{4,4}) = 8$}
\label{construction44}
\end{subfigure}
\begin{subfigure}[b]{0.5\linewidth}
          \centering

          \resizebox{\linewidth}{!}{
\usetikzlibrary{positioning}
\tikzset{state/.style={circle,fill=blue!20,draw,minimum size=1.5 em,inner sep=0pt},
            }
\begin{tikzpicture}
   \node[state] (1) {5};
    \node[state] (2) [below=1 cm of 1] {2};
    \node[state] (3) [below =1 cm of 2] {4};
    \node[state] (4) [below =1 cm of 3] {6};
    \node[state] (5) [right =1 cm of 1] {7};
    \node[state] (6) [below=1 cm of 5 ] {1};
    \node[state] (7) [below=1 cm of 6] {3};
    \node[state] (8) [below =1 cm of 7] {7};
    \node[state] (9) [right =1 cm of 5] {8};
    \node[state] (10) [below =1 cm of 9] {9};
    \node[state] (11) [below =1 cm of 10] {8};
    \node[state] (12) [below =1 cm of 11] {5};
    \node[state] (13) [right =1 cm of 9] {6};
    \node[state] (14) [below =1 cm of 13] {9};
    \node[state] (15) [below =1 cm of 14] {8};
    \node[state] (16) [below =1 cm of 15] {9};
    \node[state] (17) [right =1 cm of 13] {7};
    \node[state] (18) [below =1 cm of 17] {4};
    \node[state] (19) [below =1 cm of 18] {2};
    \node[state] (20) [below =1 cm of 19] {7};
    \node[state] (21) [right =1 cm of 17] {5};
    \node[state] (22) [below =1 cm of 21] {3};
    \node[state] (23) [below =1 cm of 22] {1};
    \node[state] (24) [below =1 cm of 23] {6};
    \path[draw,thick]
    (1) edge node {} (2)
    (2) edge node {} (3)
    (3) edge node {} (4)
    (5) edge node {} (6)
    (6) edge node {} (7)
    (7) edge node {} (8)
    (9) edge node {} (10)
    (10) edge node {} (11)
    (11) edge node {} (12)
    (13) edge node {} (14)
    (14) edge node {} (15)
    (15) edge node {} (16)
    (17) edge node {} (18)
    (18) edge node {} (19)
    (19) edge node {} (20)
    (21) edge node {} (22)
    (22) edge node {} (23)
    (23) edge node {} (24)
    (1) edge node {} (5)
    (5) edge node {} (9)
    (9) edge node {} (13)
    (13) edge node {} (17)
    (17) edge node {} (21)
    (2) edge node {} (6)
    (6) edge node {} (10)
    (10) edge node {} (14)
    (14) edge node {} (18)
    (18) edge node {} (22)
    (3) edge node {} (7)
    (7) edge node {} (11)   
    (11) edge node {} (15)
    (15) edge node {} (19)
    (19) edge node {} (23)
    (4) edge node {} (8)
    (8) edge node {} (12)
    (12) edge node {} (16)
    (16) edge node {} (20)
    (20) edge node {} (24);
\end{tikzpicture}
}
\caption{$d_3(G_{4,6}) = 9$}
\label{construction46}
\end{subfigure}
\begin{subfigure}[b]{0.52\linewidth}
          \centering

          \resizebox{\linewidth}{!}{
\usetikzlibrary{positioning}
\tikzset{main node/.style={circle,fill=blue!20,draw,minimum size=1.5 em,inner sep=0pt},
            }
\begin{tikzpicture}
    \node[main node] (1) {1};
    \node[main node] (2) [below = 1cm of 1] {10};
    \node[main node] (3) [below = 1cm of 2] {8};
    \node[main node] (4) [below =1cm of 3] {2};
    \node[main node] (5) [right =1cm of 1] {7};
    \node[main node] (6) [below = 1cm of 5] {5};
    \node[main node] (7) [below = 1cm of 6] {6};
    \node[main node] (8) [below = 1cm of 7] {9};
    \node[main node] (9) [right = 1cm of 5] {9};
    \node[main node] (10) [below = 1cm of 9] {4};
    \node[main node] (11) [below = 1cm of 10] {3};
    \node[main node] (12) [below = 1cm of 11] {7};
    \node[main node] (13) [right = 1cm of 9] {3};
    \node[main node] (14) [below = 1cm of 13] {2};
    \node[main node] (15) [below  = 1cm of 14] {1};
    \node[main node] (16) [below = 1cm of 15] {4};
    \node[main node] (17) [right = 1cm of 13] {10};
    \node[main node] (18) [below = 1cm of 17] {4};
    \node[main node] (19) [below = 1cm of 18] {3};
    \node[main node] (20) [below = 1cm of 19] {8};
    \node[main node] (21) [right = 1cm of 17] {8};
    \node[main node] (22) [below = 1cm of 21] {6};
    \node[main node] (23) [below = 1cm of 22] {5};
    \node[main node] (24) [below = 1cm of 23] {10};
    \node[main node] (25) [right = 1cm of 21] {1};
    \node[main node] (26) [below = 1cm of 25] {9};
    \node[main node] (27) [below = 1cm of 26] {7};
    \node[main node] (28) [below = 1cm of 27] {2};
    \path[draw,thick]
    (1) edge node {} (2)
    (2) edge node {} (3)
    (3) edge node {} (4)
    (1) edge node {} (5)
    (2) edge node {} (6)
    (3) edge node {} (7)
    (4) edge node {} (8)
    (5) edge node {} (6)
    (6) edge node {} (7)
    (7) edge node {} (8)
    (5) edge node {} (9)
    (6) edge node {} (10)
    (7) edge node {} (11)
    (8) edge node {} (12)
    (9) edge node {} (10)
    (10) edge node {} (11)
    (11) edge node {} (12)
    (9) edge node {} (13)
    (10) edge node {} (14)
    (11) edge node {} (15)
    (12) edge node {} (16)
    (13) edge node {} (14)
    (14) edge node {} (15)
    (15) edge node {} (16)
    (13) edge node {} (17)
    (14) edge node {} (18)
    (15) edge node {} (19)
    (16) edge node {} (20)
    (17) edge node {} (18)
    (18) edge node {} (19)
    (19) edge node {} (20)
    (17) edge node {} (21)
    (18) edge node {} (22)
    (19) edge node {} (23)
    (20) edge node {} (24)
    (21) edge node {} (22)
    (22) edge node {} (23)
    (23) edge node {} (24)
    (21) edge node {} (25)
    (22) edge node {} (26)
    (23) edge node {} (27)
    (24) edge node {} (28)
    (25) edge node {} (26)
    (26) edge node {} (27)
    (27) edge node {} (28);
\end{tikzpicture}
}
\caption{$d_3(G_{4,7}) = 10$}
\label{construction47}
\end{subfigure}
\begin{subfigure}[b]{0.35\linewidth}
          \centering

          \resizebox{\linewidth}{!}{
\usetikzlibrary{positioning}
\tikzset{state/.style={circle,fill=blue!20,draw,minimum size=1.5 em,inner sep=0pt},
            }
\begin{tikzpicture}
    \node[state] (1) {9};
    \node[state] (2) [below=1 cm of 1] {6};
    \node[state] (3) [below =1 cm of 2] {1};
    \node[state] (4) [below =1 cm of 3] {7};
    \node[state] (5) [below =1 cm of 4] {9};
    \node[state] (6) [right=1 cm of 1 ] {8};
    \node[state] (7) [below=1 cm of 6] {5};
    \node[state] (8) [below =1 cm of 7] {2};
    \node[state] (9) [below =1 cm of 8] {8};
    \node[state] (10) [below =1 cm of 9] {5};
    \node[state] (11) [right =1 cm of 6] {3};
    \node[state] (12) [below =1 cm of 11] {4};
    \node[state] (13) [below =1 cm of 12] {9};
    \node[state] (14) [below =1 cm of 13] {3};
    \node[state] (15) [below =1 cm of 14] {4};
    \node[state] (16) [right =1 cm of 11] {7};
    \node[state] (17) [below =1 cm of 16] {6};
    \node[state] (18) [below =1 cm of 17] {1};
    \node[state] (19) [below =1 cm of 18] {7};
    \node[state] (20) [below =1 cm of 19] {6};
    \node[state] (21) [right =1 cm of 16] {9};
    \node[state] (22) [below =1 cm of 21] {5};
    \node[state] (23) [below =1 cm of 22] {2};
    \node[state] (24) [below =1 cm of 23] {8};
    \node[state] (25) [below =1 cm of 24] {9};
    \path[draw,thick]
    (1) edge node {} (2)
    (2) edge node {} (3)
    (3) edge node {} (4)
    (4) edge node {} (5)
    (6) edge node {} (7)
    (7) edge node {} (8)
    (8) edge node {} (9)
    (9) edge node {} (10)
    (11) edge node {} (12)
    (12) edge node {} (13)
    (13) edge node {} (14)
    (14) edge node {} (15)
    (16) edge node {} (17)
    (17) edge node {} (18)
    (18) edge node {} (19)
    (19) edge node {} (20)
    (21) edge node {} (22)
    (22) edge node {} (23)
    (23) edge node {} (24)
    (24) edge node {} (25)
    (1) edge node {} (6)
    (6) edge node {} (11)
    (11) edge node {} (16)
    (16) edge node {} (21)
    (2) edge node {} (7)
    (7) edge node {} (12)
    (12) edge node {} (17)
    (17) edge node {} (22)
    (3) edge node {} (8)
    (8) edge node {} (13)
    (13) edge node {} (18)
    (18) edge node {} (23)   
    (4) edge node {} (9)
    (9) edge node {} (14)
    (14) edge node {} (19)
    (19) edge node {} (24)
    (5) edge node {} (10)
    (10) edge node {} (15)
    (15) edge node {} (20)
    (20) edge node {} (25);
\end{tikzpicture}
}
\caption{$d_3(G_{5,5}) = 9$}
\label{construction55}
\end{subfigure}
\caption{}
\end{figure}

When $r=5$, we note that by reversing the roles of rows and columns, then the standard block coloring and its vertical (now horizontal) extensions demonstrate that $d_3(G_{5,l})=10$ for all $l \geq 6$. So we need only consider $l=5$. We find that $d_3(G_{5,5})=9$. The lower bound is demonstrated by an explicit coloring given in Figure~\ref{construction55}. The upper bound is shown with an argument similar to the one given for $G_{4,6}$. Suppose we try to give a proper $3$-domatic coloring of the 25 vertices of $G_{5,5}$ with ten colors. Then consider the sets \[S=\{x \in V(G_{5,5}) : d(x,(1,1)) \leq 3\},\] and \[T=\{x \in V(G_{5,5}) : d(x,(5,5)) \leq 3\}.\] Since $S \cap T = \emptyset$ and $|S|=|T|=10$ (see Figure~\ref{helpful3}), then it follows that each color must appear at least twice, once on a vertex in $S$ and once on a vertex in $T$. So at most five colors can appear more than twice. Without loss of generality, we may assume the vertices of $S$ are colored as in Figure~\ref{helpful4}. Now, color $1$ does not cover $(5,1)$ or $(1,5)$, color $2$ does not cover $(5,2)$ or $(1,5)$, color $4$ does not cover $(1,2)$ or $(5,5)$, color $5$ does not cover $(2,5)$ or $(5,1)$, color $6$ does not cover $(1,5)$ or $(5,1)$, and color $10$ does not cover $(2,1)$ or $(5,5)$. Since each of these pairs are all at least distance $7$ from each other, then each of these six colors must be used three or more times, a contradiction.

\begin{figure}
\begin{subfigure}[b]{0.42\linewidth}
          \centering

          \resizebox{\linewidth}{!}{

\usetikzlibrary{positioning}
\tikzset{state/.style={circle,fill=red!20,draw,minimum size=1.5 em,inner sep=0pt},
            }
\tikzset{state2/.style={circle,fill=green!20,draw,minimum size=1.5 em,inner sep=0pt},
            }
\tikzset{state3/.style={circle,fill=blue!20,draw,minimum size=1.5 em,inner sep=0pt},
            }
\begin{tikzpicture}
    \node[state] (1) {S};
    \node[state] (2) [below=1 cm of 1] {S};
    \node[state] (3) [below =1 cm of 2] {S};
    \node[state] (4) [below =1 cm of 3] {S};
    \node[state3] (5) [below =1 cm of 4] {};
    \node[state] (6) [right=1 cm of 1 ] {S};
    \node[state] (7) [below=1 cm of 6] {S};
    \node[state] (8) [below =1 cm of 7] {S};
    \node[state3] (9) [below =1 cm of 8] {};
    \node[state2] (10) [below =1 cm of 9] {T};
    \node[state] (11) [right =1 cm of 6] {S};
    \node[state] (12) [below =1 cm of 11] {S};
    \node[state3] (13) [below =1 cm of 12] {};
    \node[state2] (14) [below =1 cm of 13] {T};
    \node[state2] (15) [below =1 cm of 14] {T};
    \node[state] (16) [right =1 cm of 11] {S};
    \node[state3] (17) [below =1 cm of 16] {};
    \node[state2] (18) [below =1 cm of 17] {T};
    \node[state2] (19) [below =1 cm of 18] {T};
    \node[state2] (20) [below =1 cm of 19] {T};
    \node[state3] (21) [right =1 cm of 16] {};
    \node[state2] (22) [below =1 cm of 21] {T};
    \node[state2] (23) [below =1 cm of 22] {T};
    \node[state2] (24) [below =1 cm of 23] {T};
    \node[state2] (25) [below =1 cm of 24] {T};
    \path[draw,thick]
    (1) edge node {} (2)
    (2) edge node {} (3)
    (3) edge node {} (4)
    (4) edge node {} (5)
    (6) edge node {} (7)
    (7) edge node {} (8)
    (8) edge node {} (9)
    (9) edge node {} (10)
    (11) edge node {} (12)
    (12) edge node {} (13)
    (13) edge node {} (14)
    (14) edge node {} (15)
    (16) edge node {} (17)
    (17) edge node {} (18)
    (18) edge node {} (19)
    (19) edge node {} (20)
    (21) edge node {} (22)
    (22) edge node {} (23)
    (23) edge node {} (24)
    (24) edge node {} (25)
    (1) edge node {} (6)
    (6) edge node {} (11)
    (11) edge node {} (16)
    (16) edge node {} (21)
    (2) edge node {} (7)
    (7) edge node {} (12)
    (12) edge node {} (17)
    (17) edge node {} (22)
    (3) edge node {} (8)
    (8) edge node {} (13)
    (13) edge node {} (18)
    (18) edge node {} (23)   
    (4) edge node {} (9)
    (9) edge node {} (14)
    (14) edge node {} (19)
    (19) edge node {} (24)
    (5) edge node {} (10)
    (10) edge node {} (15)
    (15) edge node {} (20)
    (20) edge node {} (25);
\end{tikzpicture}
}
\caption{}
\label{helpful3}
\end{subfigure}
\begin{subfigure}[b]{0.42\linewidth}
          \centering

          \resizebox{\linewidth}{!}{

\usetikzlibrary{positioning}
\tikzset{state/.style={circle,fill=red!20,draw,minimum size=1.5 em,inner sep=0pt},
            }
\tikzset{state2/.style={circle,fill=green!20,draw,minimum size=1.5 em,inner sep=0pt},
            }
\tikzset{state3/.style={circle,fill=blue!20,draw,minimum size=1.5 em,inner sep=0pt},
            }
\begin{tikzpicture}
    \node[state] (1) {1};
    \node[state] (2) [below=1 cm of 1] {2};
    \node[state] (3) [below =1 cm of 2] {3};
    \node[state] (4) [below =1 cm of 3] {4};
    \node[state3] (5) [below =1 cm of 4] {};
    \node[state] (6) [right=1 cm of 1 ] {5};
    \node[state] (7) [below=1 cm of 6] {6};
    \node[state] (8) [below =1 cm of 7] {7};
    \node[state3] (9) [below =1 cm of 8] {};
    \node[state2] (10) [below =1 cm of 9] {T};
    \node[state] (11) [right =1 cm of 6] {8};
    \node[state] (12) [below =1 cm of 11] {9};
    \node[state3] (13) [below =1 cm of 12] {};
    \node[state2] (14) [below =1 cm of 13] {T};
    \node[state2] (15) [below =1 cm of 14] {T};
    \node[state] (16) [right =1 cm of 11] {10};
    \node[state3] (17) [below =1 cm of 16] {};
    \node[state2] (18) [below =1 cm of 17] {T};
    \node[state2] (19) [below =1 cm of 18] {T};
    \node[state2] (20) [below =1 cm of 19] {T};
    \node[state3] (21) [right =1 cm of 16] {};
    \node[state2] (22) [below =1 cm of 21] {T};
    \node[state2] (23) [below =1 cm of 22] {T};
    \node[state2] (24) [below =1 cm of 23] {T};
    \node[state2] (25) [below =1 cm of 24] {T};
    \path[draw,thick]
    (1) edge node {} (2)
    (2) edge node {} (3)
    (3) edge node {} (4)
    (4) edge node {} (5)
    (6) edge node {} (7)
    (7) edge node {} (8)
    (8) edge node {} (9)
    (9) edge node {} (10)
    (11) edge node {} (12)
    (12) edge node {} (13)
    (13) edge node {} (14)
    (14) edge node {} (15)
    (16) edge node {} (17)
    (17) edge node {} (18)
    (18) edge node {} (19)
    (19) edge node {} (20)
    (21) edge node {} (22)
    (22) edge node {} (23)
    (23) edge node {} (24)
    (24) edge node {} (25)
    (1) edge node {} (6)
    (6) edge node {} (11)
    (11) edge node {} (16)
    (16) edge node {} (21)
    (2) edge node {} (7)
    (7) edge node {} (12)
    (12) edge node {} (17)
    (17) edge node {} (22)
    (3) edge node {} (8)
    (8) edge node {} (13)
    (13) edge node {} (18)
    (18) edge node {} (23)   
    (4) edge node {} (9)
    (9) edge node {} (14)
    (14) edge node {} (19)
    (19) edge node {} (24)
    (5) edge node {} (10)
    (10) edge node {} (15)
    (15) edge node {} (20)
    (20) edge node {} (25);
\end{tikzpicture}
}
\caption{}
\label{helpful4}
\end{subfigure}
\caption{}
\end{figure}

Finally, we get that the remaining grid graph, for all $6 \leq r \leq l$ are $3$-domatically full. When $r \geq 6$, Theorem~\ref{mainthm} gives us this for all $l \geq 8$. To show that \[d_3(G_{6,6})=d_3(G_{6,7}) =d_3(G_{7,7}) = 10,\] we use the constructions given in Figure~\ref{constructions67}.

\begin{figure}
\begin{subfigure}[b]{0.42\linewidth}
          \centering

          \resizebox{\linewidth}{!}{

\usetikzlibrary{positioning}
\tikzset{main node/.style={circle,fill=blue!20,draw,minimum size=1.5 em,inner sep=0pt},
            }
\begin{tikzpicture}
    \node[main node] (1) {4};
    \node[main node] (2) [below = 1cm of 1] {10};
    \node[main node] (3) [below = 1cm of 2] {2};
    \node[main node] (4) [below =1cm of 3] {5};
    \node[main node] (5) [below =1cm of 4] {4};
    \node[main node] (6) [below = 1cm of 5] {1};
    \node[main node] (7) [right = 1cm of 1] {3};
    \node[main node] (8) [below = 1cm of 7] {7};
    \node[main node] (9) [below = 1cm of 8] {8};
    \node[main node] (10) [below = 1cm of 9] {9};
    \node[main node] (11) [below = 1cm of 10] {6};
    \node[main node] (12) [below = 1cm of 11] {10};
    \node[main node] (13) [right = 1cm of 7] {6};
    \node[main node] (14) [below = 1cm of 13] {9};
    \node[main node] (15) [below  = 1cm of 14] {1};
    \node[main node] (16) [below = 1cm of 15] {2};
    \node[main node] (17) [below = 1cm of 16] {7};
    \node[main node] (18) [below = 1cm of 17] {3};
    \node[main node] (19) [right = 1cm of 13] {1};
    \node[main node] (20) [below = 1cm of 19] {5};
    \node[main node] (21) [below = 1cm of 20] {4};
    \node[main node] (22) [below = 1cm of 21] {3};
    \node[main node] (23) [below = 1cm of 22] {9};
    \node[main node] (24) [below = 1cm of 23] {8};
    \node[main node] (25) [right = 1cm of 19] {10};
    \node[main node] (26) [below = 1cm of 25] {8};
    \node[main node] (27) [below = 1cm of 26] {9};
    \node[main node] (28) [below = 1cm of 27] {6};
    \node[main node] (29) [below = 1cm of 28] {5};
    \node[main node] (30) [below = 1cm of 29] {1};
    \node[main node] (31) [right = 1cm of 25] {3};
    \node[main node] (32) [below = 1cm of 31] {2};
    \node[main node] (33) [below = 1cm of 32] {7};
    \node[main node] (34) [below = 1cm of 33] {4};
    \node[main node] (35) [below = 1cm of 34] {10};
    \node[main node] (36) [below = 1cm of 35] {2};
    \path[draw,thick]
    (1) edge node {} (2)
    (2) edge node {} (3)
    (3) edge node {} (4)
    (4) edge node {} (5)
    (5) edge node {} (6)
    (1) edge node {} (7)
    (2) edge node {} (8)
    (3) edge node {} (9)
    (4) edge node {} (10)
    (5) edge node {} (11)
    (6) edge node {} (12)
    (7) edge node {} (8)
    (8) edge node {} (9)
    (9) edge node {} (10)
    (10) edge node {} (11)
    (11) edge node {} (12)
    (7) edge node {} (13)
    (8) edge node {} (14)
    (9) edge node {} (15)
    (10) edge node {} (16)
    (11) edge node {} (17)
    (12) edge node {} (18)
    (13) edge node {} (14)
    (14) edge node {} (15)
    (15) edge node {} (16)
    (16) edge node {} (17)
    (17) edge node {} (18)
    (13) edge node {} (19)
    (14) edge node {} (20)
    (15) edge node {} (21)
    (16) edge node {} (22)
    (17) edge node {} (23)
    (18) edge node {} (24)
    (19) edge node {} (20)
    (20) edge node {} (21)
    (21) edge node {} (22)
    (22) edge node {} (23)
    (23) edge node {} (24)
    (19) edge node {} (25)
    (20) edge node {} (26)
    (21) edge node {} (27)
    (22) edge node {} (28)
    (23) edge node {} (29)
    (24) edge node {} (30)
    (25) edge node {} (26)
    (26) edge node {} (27)
    (27) edge node {} (28)
    (28) edge node {} (29)
    (29) edge node {} (30)
    (25) edge node {} (31)
    (26) edge node {} (32)
    (27) edge node {} (33)
    (28) edge node {} (34)
    (29) edge node {} (35)
    (30) edge node {} (36)
    (31) edge node {} (32)
    (32) edge node {} (33)
    (33) edge node {} (34)
    (34) edge node {} (35)
    (35) edge node {} (36);
\end{tikzpicture}
}
\caption{$d_3(G_{6,6}) = 10$}
\end{subfigure}
\begin{subfigure}[b]{0.48\linewidth}
          \centering

          \resizebox{\linewidth}{!}{

\usetikzlibrary{positioning}
\tikzset{main node/.style={circle,fill=blue!20,draw,minimum size=1.5 em,inner sep=0pt},
            }
\begin{tikzpicture}
    \node[main node] (1) {1};
    \node[main node] (2) [below = 1cm of 1] {8};
    \node[main node] (3) [below = 1cm of 2] {5};
    \node[main node] (4) [below =1cm of 3] {7};
    \node[main node] (5) [below =1cm of 4] {6};
    \node[main node] (6) [below = 1cm of 5] {4};
    \node[main node] (7) [right = 1cm of 1] {2};
    \node[main node] (8) [below = 1cm of 7] {3};
    \node[main node] (9) [below = 1cm of 8] {9};
    \node[main node] (10) [below = 1cm of 9] {10};
    \node[main node] (11) [below = 1cm of 10] {2};
    \node[main node] (12) [below = 1cm of 11] {3};
    \node[main node] (13) [right = 1cm of 7] {6};
    \node[main node] (14) [below = 1cm of 13] {4};
    \node[main node] (15) [below  = 1cm of 14] {2};
    \node[main node] (16) [below = 1cm of 15] {10};
    \node[main node] (17) [below = 1cm of 16] {1};
    \node[main node] (18) [below = 1cm of 17] {9};
    \node[main node] (19) [right = 1cm of 13] {10};
    \node[main node] (20) [below = 1cm of 19] {8};
    \node[main node] (21) [below = 1cm of 20] {5};
    \node[main node] (22) [below = 1cm of 21] {6};
    \node[main node] (23) [below = 1cm of 22] {7};
    \node[main node] (24) [below = 1cm of 23] {8};
    \node[main node] (25) [right = 1cm of 19] {7};
    \node[main node] (26) [below = 1cm of 25] {3};
    \node[main node] (27) [below = 1cm of 26] {9};
    \node[main node] (28) [below = 1cm of 27] {4};
    \node[main node] (29) [below = 1cm of 28] {2};
    \node[main node] (30) [below = 1cm of 29] {5};
    \node[main node] (31) [right = 1cm of 25] {1};
    \node[main node] (32) [below = 1cm of 31] {4};
    \node[main node] (33) [below = 1cm of 32] {8};
    \node[main node] (34) [below = 1cm of 33] {10};
    \node[main node] (35) [below = 1cm of 34] {1};
    \node[main node] (36) [below = 1cm of 35] {4};
    \node[main node] (37) [right = 1cm of 31] {2};
    \node[main node] (38) [below = 1cm of 37] {5};
    \node[main node] (39) [below = 1cm of 38] {9};
    \node[main node] (40) [below = 1cm of 39] {6};
    \node[main node] (41) [below = 1cm of 40] {7};
    \node[main node] (42) [below = 1cm of 41] {3};
    \path[draw,thick]
    (1) edge node {} (2)
    (2) edge node {} (3)
    (3) edge node {} (4)
    (4) edge node {} (5)
    (5) edge node {} (6)
    (1) edge node {} (7)
    (2) edge node {} (8)
    (3) edge node {} (9)
    (4) edge node {} (10)
    (5) edge node {} (11)
    (6) edge node {} (12)
    (7) edge node {} (8)
    (8) edge node {} (9)
    (9) edge node {} (10)
    (10) edge node {} (11)
    (11) edge node {} (12)
    (7) edge node {} (13)
    (8) edge node {} (14)
    (9) edge node {} (15)
    (10) edge node {} (16)
    (11) edge node {} (17)
    (12) edge node {} (18)
    (13) edge node {} (14)
    (14) edge node {} (15)
    (15) edge node {} (16)
    (16) edge node {} (17)
    (17) edge node {} (18)
    (13) edge node {} (19)
    (14) edge node {} (20)
    (15) edge node {} (21)
    (16) edge node {} (22)
    (17) edge node {} (23)
    (18) edge node {} (24)
    (19) edge node {} (20)
    (20) edge node {} (21)
    (21) edge node {} (22)
    (22) edge node {} (23)
    (23) edge node {} (24)
    (19) edge node {} (25)
    (20) edge node {} (26)
    (21) edge node {} (27)
    (22) edge node {} (28)
    (23) edge node {} (29)
    (24) edge node {} (30)
    (25) edge node {} (26)
    (26) edge node {} (27)
    (27) edge node {} (28)
    (28) edge node {} (29)
    (29) edge node {} (30)
    (25) edge node {} (31)
    (26) edge node {} (32)
    (27) edge node {} (33)
    (28) edge node {} (34)
    (29) edge node {} (35)
    (30) edge node {} (36)
    (31) edge node {} (32)
    (32) edge node {} (33)
    (33) edge node {} (34)
    (34) edge node {} (35)
    (35) edge node {} (36)
    (31) edge node {} (37)
    (32) edge node {} (38)
    (33) edge node {} (39)
    (34) edge node {} (40)
    (35) edge node {} (41)
    (36) edge node {} (42)
    (37) edge node {} (38)
    (38) edge node {} (39)
    (39) edge node {} (40)
    (40) edge node {} (41)
    (41) edge node {} (42);  
\end{tikzpicture}
}
\caption{$d_3(G_{6,7}) = 10$}
\end{subfigure}

\begin{subfigure}[b]{0.48\linewidth}
          \centering

          \resizebox{\linewidth}{!}{

\usetikzlibrary{positioning}
\tikzset{main node/.style={circle,fill=blue!20,draw,minimum size=1.5 em,inner sep=0pt},
            }
\begin{tikzpicture}
    \node[main node] (1) {8};
    \node[main node] (2) [below = 1cm of 1] {7};
    \node[main node] (3) [below = 1cm of 2] {9};
    \node[main node] (4) [below =1cm of 3] {5};
    \node[main node] (5) [below =1cm of 4] {9};
    \node[main node] (6) [below = 1cm of 5] {8};
    \node[main node] (7) [below = 1cm of 6] {10};
    \node[main node] (8) [right = 1cm of 1] {10};
    \node[main node] (9) [below = 1cm of 8] {2};
    \node[main node] (10) [below = 1cm of 9] {3};
    \node[main node] (11) [below = 1cm of 10] {7};
    \node[main node] (12) [below = 1cm of 11] {3};
    \node[main node] (13) [below = 1cm of 12] {1};
    \node[main node] (14) [below = 1cm of 13] {7};
    \node[main node] (15) [right  = 1cm of 8] {4};
    \node[main node] (16) [below = 1cm of 15] {1};
    \node[main node] (17) [below = 1cm of 16] {6};
    \node[main node] (18) [below = 1cm of 17] {4};
    \node[main node] (19) [below = 1cm of 18] {6};
    \node[main node] (20) [below = 1cm of 19] {2};
    \node[main node] (21) [below = 1cm of 20] {4};
    \node[main node] (22) [right = 1cm of 15] {6};
    \node[main node] (23) [below = 1cm of 22] {9};
    \node[main node] (24) [below = 1cm of 23] {8};
    \node[main node] (25) [below = 1cm of 24] {3};
    \node[main node] (26) [below = 1cm of 25] {10};
    \node[main node] (27) [below = 1cm of 26] {9};
    \node[main node] (28) [below = 1cm of 27] {6};
    \node[main node] (29) [right = 1cm of 22] {5};
    \node[main node] (30) [below = 1cm of 29] {1};
    \node[main node] (31) [below = 1cm of 30] {6};
    \node[main node] (32) [below = 1cm of 31] {5};
    \node[main node] (33) [below = 1cm of 32] {6};
    \node[main node] (34) [below = 1cm of 33] {2};
    \node[main node] (35) [below = 1cm of 34] {5};
    \node[main node] (36) [right = 1cm of 29] {7};
    \node[main node] (37) [below = 1cm of 36] {2};
    \node[main node] (38) [below = 1cm of 37] {3};
    \node[main node] (39) [below = 1cm of 38] {7};
    \node[main node] (40) [below = 1cm of 39] {3};
    \node[main node] (41) [below = 1cm of 40] {1};
    \node[main node] (42) [below = 1cm of 41] {7};
    \node[main node] (43) [right = 1cm of 36] {8};
    \node[main node] (44) [below = 1cm of 43] {10};
    \node[main node] (45) [below = 1cm of 44] {9};
    \node[main node] (46) [below = 1cm of 45] {4};
    \node[main node] (47) [below = 1cm of 46] {9};
    \node[main node] (48) [below = 1cm of 47] {10};
    \node[main node] (49) [below = 1cm of 48] {8};
    \path[draw,thick]
    (1) edge node {} (2)
    (2) edge node {} (3)
    (3) edge node {} (4)
    (4) edge node {} (5)
    (5) edge node {} (6)
    (6) edge node {} (7)
    (1) edge node {} (8)
    (2) edge node {} (9)
    (3) edge node {} (10)
    (4) edge node {} (11)
    (5) edge node {} (12)
    (6) edge node {} (13)
    (7) edge node {} (14)
    (8) edge node {} (9)
    (9) edge node {} (10)
    (10) edge node {} (11)
    (11) edge node {} (12)
    (12) edge node {} (13)
    (13) edge node {} (14)
    (8) edge node {} (15)
    (9) edge node {} (16)
    (10) edge node {} (17)
    (11) edge node {} (18)
    (12) edge node {} (19)
    (13) edge node {} (20)
    (14) edge node {} (21)
    (15) edge node {} (16)
    (16) edge node {} (17)
    (17) edge node {} (18)
    (18) edge node {} (19)
    (19) edge node {} (20)
    (20) edge node {} (21)
    (15) edge node {} (22)
    (16) edge node {} (23)
    (17) edge node {} (24)
    (18) edge node {} (25)
    (19) edge node {} (26)
    (20) edge node {} (27)
    (21) edge node {} (28)
    (22) edge node {} (23)
    (23) edge node {} (24)
    (24) edge node {} (25)
    (25) edge node {} (26)
    (26) edge node {} (27)
    (27) edge node {} (28)
    (22) edge node {} (29)
    (23) edge node {} (30)
    (24) edge node {} (31)
    (25) edge node {} (32)
    (26) edge node {} (33)
    (27) edge node {} (34)
    (28) edge node {} (35)
    (29) edge node {} (30)
    (30) edge node {} (31)
    (31) edge node {} (32)
    (32) edge node {} (33)
    (33) edge node {} (34)
    (34) edge node {} (35)
    (29) edge node {} (36)
    (30) edge node {} (37)
    (31) edge node {} (38)
    (32) edge node {} (39)
    (33) edge node {} (40)
    (34) edge node {} (41)
    (35) edge node {} (42)
    (36) edge node {} (37)
    (37) edge node {} (38)
    (38) edge node {} (39)
    (39) edge node {} (40)
    (40) edge node {} (41)
    (41) edge node {} (42)
    (36) edge node {} (43)
    (37) edge node {} (44)
    (38) edge node {} (45)
    (39) edge node {} (46)
    (40) edge node {} (47)
    (41) edge node {} (48)
    (42) edge node {} (49)
    (43) edge node {} (44)
    (44) edge node {} (45)
    (45) edge node {} (46)
    (46) edge node {} (47)
    (47) edge node {} (48)
    (48) edge node {} (49);
\end{tikzpicture}
}
\caption{$d_3(G_{7,7}) = 10$}
\end{subfigure}
\caption{}
\label{constructions67}
\end{figure}

Note that the only two-dimensional grid graphs that are not $3$-domatically full are $G_{2,4}$, $G_{2,5}$, $G_{2,6}$, $G_{3,3}$, $G_{3,4}$, $G_{3,5}$, $G_{4,4}$, $G_{4,6}$, and $G_{5,5}$. Moreover, $G_{3,4}$ and $G_{4,4}$ have $3$-domatic numbers that are actually two less than $3$-domatically full, then rest are only one less.

\section{The infinite case}
\label{infsection}

\begin{theorem}
Let $k$ be a positive integer, then \[d_k\left(G_{\infty,\infty}\right) = 2k^2+2k+1.\]  Therefore, the two-dimensional infinite grid graph is domatically-full.
\end{theorem}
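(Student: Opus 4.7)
The plan is to prove the two inequalities separately. For the upper bound, every vertex of $G_{\infty,\infty}$ has the same $k$-distance degree, since the graph is vertex-transitive:
\[
f_k((x,y)) = 1 + \sum_{d=1}^{k} 4d = 2k^2 + 2k + 1
\]
for all $(x,y) \in \mathbb{Z}^2$. Hence $\delta_k(G_{\infty,\infty}) = 2k^2+2k+1$, and the general bound $d_k \leq \delta_k$ handles this direction.

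For the lower bound I will exhibit an explicit proper $k$-distance domatic coloring with $N := 2k^2+2k+1$ colors, built from the classical perfect Lee-sphere tiling of $\mathbb{Z}^2$. Let $L \subset \mathbb{Z}^2$ be the sublattice generated by $e_1 = (k+1,\,k)$ and $e_2 = (-k,\,k+1)$; then $[\mathbb{Z}^2 : L] = |\det(e_1,e_2)| = (k+1)^2 + k^2 = N$. Define the coloring
\[
c(x,y) \;=\; \bigl((k+1)x + k\,y\bigr)\bmod N.
\]
One checks $c(e_1) = N \equiv 0$ and $c(e_2) = 0$, so $L \subseteq \ker c$; combined with $\gcd(k, k+1)=1$, which makes the image of $c$ all of $\mathbb{Z}/N\mathbb{Z}$, this forces $\ker c = L$. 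Thus the $N$ color classes of $c$ are precisely the $N$ cosets of $L$.

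To verify that $c$ is proper $k$-distance domatic, it suffices, by translation, to show that every point of $\mathbb{Z}^2$ lies within graph distance $k$ of some point of $L$. The main obstacle is the following norm bound: \emph{every nonzero $v \in L$ satisfies $\|v\|_1 \geq 2k+1$}. Writing $v = a\,e_1 + b\,e_2 = \bigl(a(k+1)-bk,\; ak+b(k+1)\bigr)$, I will split into cases on the signs of $a$ and $b$. If $a = 0$ or $b = 0$, then $\|v\|_1 = (|a|+|b|)(2k+1) \geq 2k+1$. If $a$ and $b$ are both nonzero and share a sign, then $ak$ and $b(k+1)$ do not cancel, so $|ak + b(k+1)| \geq k + (k+1) = 2k+1$. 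If $a$ and $b$ have opposite nonzero signs, then $a(k+1)$ and $-bk$ do not cancel, so $|a(k+1) - bk| \geq (k+1) + k = 2k+1$. The subtlety here is that the cruder estimate $\|v\|_1 \geq \|v\|_2 \geq \sqrt{N}$ falls just short of $2k+1$, so the coordinatewise case analysis is essential.

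Given the norm bound, the Lee ball $B = \{p \in \mathbb{Z}^2 : \|p\|_1 \leq k\}$ contains no two $L$-equivalent points, since their difference would be a nonzero element of $L$ with $\|\cdot\|_1 \leq 2k$. Hence the quotient map $\pi : \mathbb{Z}^2 \to \mathbb{Z}^2/L$ is injective on $B$, and since $|B| = N = |\mathbb{Z}^2/L|$, the restriction $\pi|_B$ is in fact a bijection. Consequently, every coset of $L$ meets $B$, which is to say that every point of $\mathbb{Z}^2$ lies within distance $k$ of some element of each coset. This proves $c$ is a proper $k$-distance domatic coloring with $N$ colors, and combined with the upper bound yields $d_k(G_{\infty,\infty}) = 2k^2+2k+1$, so the infinite grid is $k$-distance domatically-full.
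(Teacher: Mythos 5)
Your proof is correct and follows essentially the same route as the paper: both exhibit an explicit linear coloring modulo $2k^2+2k+1$ whose color classes form a perfect Lee-sphere tiling (the paper uses $(2k+1)a+b$, you use $(k+1)x+ky$, both built on a square root of $-1$ modulo $2k^2+2k+1$), and both reduce the domatic property to showing that no nonzero period of the coloring has $\ell_1$-norm at most $2k$. Your lattice/coset formulation makes explicit the final counting step --- that a ball of radius $k$ has exactly $2k^2+2k+1$ vertices and therefore meets every color class exactly once --- which the paper leaves implicit, but the construction and verification are the same in substance.
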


\begin{proof}
The number of vertices within distance $k$ of any given vertex of $G_{\infty,\infty}$ is \[1+4k+4\sum_{i=1}^{k-1}i = 2k^2+2k+1.\] Therefore, \[d_k\left(G_{\infty,\infty}\right) \leq 2k^2+2k+1.\] We now give a coloring \[\chi:V \rightarrow \mathbb{Z}_{2k^2+2k+1}\] to show that this is also a lower bound. Let \[\chi((a,b)) = (2k+1)a+b \mod{2k^2+2k+1}.\]

Suppose, towards a contradiction, that some vertex is within distance $k$ away from two different vertices with the same color. This is equivalent to assuming that there exist two distinct vertices $(a_1,b_1)$ and $(a_2,b_2)$ for which \[a_1(2k+1)+b_1 \equiv a_2(2k+1)+b_2 \mod{2k^2+2k+1},\] and \[|a_1-a_2|+|b_1-b_2| \leq 2k.\] We may assume without loss of generality that $a_1 \leq a_2$. Let $A=a_2-a_1$ and $B=|b_1-b_2|$. Then it follows that \[A(2k+1) \pm B \equiv 0 \mod{2k^2+2k+1}.\] Therefore, $A(2k+1) \pm B$ must equal some nonzero multiple of $2k^2+2k+1$. If $A=0$, then \[-2k \leq A(2k+1) \pm B \leq 2k.\] Therefore, we may assume that $A \geq 1$ and that $A(2k+1) \pm B$ must be a positive multiple of $2k^2+2k+1$. However, it is easy to see that in general \[A(2k+1) \pm B \leq 4k^2 +2k < 2(2k^2+2k+1).\] Hence, we need \[A(2k+1) \pm B = 2k^2+2k+1\] exactly.

Given some fixed $1 \leq A \leq 2k$, we get that $-(2k-A) \leq \pm B \leq 2k-A$ and so we need that \[(2k+1)A-(2k-A) \leq 2k^2+2k+1 \leq A(2k+1)+(2k-A).\] Therefore,
\begin{align*}
(2k+2)A - 2k &\leq 2k^2+2k+1\\
A &\leq \left\lfloor \frac{2k^2+4k+1}{2k+2} \right\rfloor \\
A &\leq k,
\end{align*}
using the fact that $A$ must be an integer to introduce the floor function. Similarly,
\begin{align*}
2k^2+2k+1 &\leq 2kA +2k\\
\left\lceil \frac{2k^2+1}{2k} \right\rceil &\leq A\\
k+1 &\leq A,
\end{align*}
a contradiction.
\end{proof}

\section{conclusion}

So far we have been unsuccessful in our attempts to extend some version of the standard block coloring to the general cases when \[k+3 \leq r \leq k+ \left\lceil\frac{k+1}{2}\right\rceil.\] When $k \leq 3$, this does not matter, but when $k \geq 4$, this begins to leave an infinite number of cases where the grid graph might not be $k$-distance domatically-full. We conjecture that this is not the case for any $k$ - that there are only finitely many two-dimensional grid graphs that are not $k$-distance domatically-full for any $k$. We believe we have demonstrated this to ourselves for $k=4$, but have not included the result here since there is no obvious way to generalize it. So we leave the question open.

\bibliography{distancedomatic4}
\bibliographystyle{plain}

\end{document}